\begin{document}
\baselineskip=14pt

\newcommand{\bfc}{\color{blue}} %% francis
\newcommand{\gfc}{\color{red}}
\newcommand{\ec}{\color{black}}

\newtheorem{defin}{Definition}[section]
\newtheorem{teo}{Theorem}[section]
\newtheorem{ml}{Main Lemma}
\newtheorem{con}{Conjecture}
\newtheorem{cond}{Condition}
\newtheorem{conj}{Conjecture}
\newtheorem{prop}[teo]{Proposition}
\newtheorem{lem}{Lemma}[section]
\newtheorem{rmk}[teo]{Remark}
\newtheorem{cor}{Corollary}[section]
\renewcommand{\theequation}{\thesection .\arabic{equation}}
\numberwithin{equation}{section}

\newcommand{\beq}{\begin{equation}}
\newcommand{\eeq}{\end{equation}}
\newcommand{\beqn}{\begin{eqnarray}}
\newcommand{\beqnn}{\begin{eqnarray*}}
\newcommand{\eeqn}{\end{eqnarray}}
\newcommand{\eeqnn}{\end{eqnarray*}}
\newcommand{\bprop}{\begin{prop}}
\newcommand{\eprop}{\end{prop}}
\newcommand{\bteo}{\begin{teo}}
\newcommand{\bcor}{\begin{cor}}
\newcommand{\ecor}{\end{cor}}
\newcommand{\bcon}{\begin{con}}
\newcommand{\econ}{\end{con}}
\newcommand{\bcond}{\begin{cond}}
\newcommand{\econd}{\end{cond}}
\newcommand{\bconj}{\begin{conj}}
\newcommand{\econj}{\end{conj}}
\newcommand{\eteo}{\end{teo}}
\newcommand{\brm}{\begin{rmk}}
\newcommand{\erm}{\end{rmk}}
\newcommand{\blem}{\begin{lem}}
\newcommand{\elem}{\end{lem}}
\newcommand{\ben}{\begin{enumerate}}
\newcommand{\een}{\end{enumerate}}
\newcommand{\bei}{\begin{itemize}}
\newcommand{\eei}{\end{itemize}}
\newcommand{\bdf}{\begin{defin}}
\newcommand{\edf}{\end{defin}}
\newcommand{\bpr}{\begin{proof}}
\newcommand{\epr}{\end{proof}}

\newcommand{\halmos}{\rule{1ex}{1.4ex}}
\def \qed {{\hspace*{\fill}$\halmos$\medskip}}

\newcommand{\cda}{\overset{\alpha}{\cdot}}

\newcommand{\fr}{\frac}
\newcommand{\Z}{{\mathbb Z}}
\newcommand{\R}{{\mathbb R}}
\newcommand{\E}{{\mathbb E}}
\newcommand{\C}{{\mathbb C}}
\renewcommand{\P}{{\mathbb P}}
\newcommand{\N}{{\mathbb N}}
\newcommand{\var}{{\mathbb V}{\rm ar}}
\renewcommand{\S}{{\mathcal S}}
\newcommand{\T}{{\mathcal T}}
\newcommand{\W}{{\mathcal W}}
\newcommand{\X}{{\mathcal X}}
\newcommand{\Y}{{\mathcal Y}}
\newcommand{\h}{{\mathcal H}}
\newcommand{\f}{{\mathcal F}}
\newcommand{\ex}{{\mathcal E}}

\renewcommand{\a}{\alpha}
\renewcommand{\b}{\beta}
\newcommand{\g}{\gamma}
\newcommand{\G}{\Gamma}
\renewcommand{\L}{\Lambda}
\renewcommand{\l}{\lambda}
\renewcommand{\d}{\delta}
\newcommand{\D}{\Delta}
\newcommand{\e}{\epsilon}
\newcommand{\s}{\sigma}
\newcommand{\B}{{\mathcal B}}
\renewcommand{\o}{\omega}

\newcommand{\bj}{{\bf j}}

\newcommand{\nn}{\nonumber}
\renewcommand{\=}{&=&}
\renewcommand{\>}{&>&}
\newcommand{\<}{&<&}
\renewcommand{\le}{\leq}
\newcommand{\+}{&+&}

%Differentiation
\newcommand{\pa}{\partial}
\newcommand{\ffrac}[2]{{\textstyle\frac{{#1}}{{#2}}}}
\newcommand{\dif}[1]{\ffrac{\partial}{\partial{#1}}}
\newcommand{\diff}[1]{\ffrac{\partial^2}{{\partial{#1}}^2}}
\newcommand{\difif}[2]{\ffrac{\partial^2}{\partial{#1}\partial{#2}}}

\parindent=0cm
\def\8{\infty}
\def\la{\lambda}
\newcommand{\eps}{\varepsilon}
\newcommand{\cvlaw}{\stackrel{\rm law}{\longrightarrow}}
\newcommand{\eqlaw}{\stackrel{\rm law}{=}}
\newcommand{\uu}{{\tt u}}
\newcommand{\Ta}{T^{(\alpha)}}
\newcommand{\phia}{\phi^{(\alpha)}}
\newcommand{\Sa}{{\mathcal S}_{\alpha}}
\newcommand{\Sd}{{\mathcal S}_{1/2}}
\newcommand{\xiij}{{ \xi_{i,j}}}
\newcommand{\omegaij}{{ \omega_{i,j}}}
\newcommand{\fxi}{{\mathfrak X}}
\def\cZ{\mathcal Z}

\newcommand{\VNAI}{V_{N,\alpha}^{\infty}}
\newcommand{\VNBI}{V_{N,\beta}^{\infty}}
\newcommand{\VINAI}{\overleftarrow{V}^{\infty}_{N, \alpha}}
\newcommand{\VNI}{V^{\infty}_{N}}
\newcommand{\VINI}{\overleftarrow{V}^{\infty}_{N}}

\title{
Random polymers on the complete graph }

%%%%%%%%%%%%%%%%%%%%%%%%%

\author{Francis Comets$^{1,5}$, Gregorio Moreno$^{2,4,5}$ and Alejandro F. Ram\'\i rez$^{3,4,5}$}

\thanks{ AMS 2000 {\it subject classifications}. Primary  60K35;
 secondary 82B23, 60B20
 %82C24, 60K05, 60G50, 60G70, 82B43.
 }

\thanks{{\it Key words and phrases.} 
Directed polymers, random medium, exactly solvable model, stable laws, product of random matrices}

\thanks{$^1$Universit\'e Paris Diderot. Partially supported by CNRS, Laboratoire de Probabilit\'es et Mod\`eles
 Al\'eatoires,
UMR 7599.}

\thanks{$^2$ Pontificia Universidad Cat\'olica de Chile. Partially supported by Fondecyt grant 1171257}

\thanks{$^3$ Pontificia Universidad Cat\'olica de Chile. Partially supported by Fondecyt grant 1141094}

\thanks{$^4$ Partially supported by N\'ucleo Milenio `Modelos Estoc\'asticos de Sistemas Complejos y Desordenados'}

\thanks{$^5$ Partially supported by Mathamsud `Large-scale Behavior of Stochastic Systems'}

\address[Francis Comets]{Universit\'e Paris Diderot - Paris 7\\
Math\'ematiques, case 7012\\
F-75 205 Paris Cedex 13, France}
\email{comets@math.univ-paris-diderot.fr}

\address[Gregorio Moreno, Alejandro F. Ram\'\i rez]{Facultad de Matem\'aticas\\
Pontificia Universidad Cat\'olica de Chile\\
Vicu\~na Mackenna 4860, Macul\\
Santiago, Chile}

\email{grmoreno@mat.uc.cl, aramirez@mat.uc.cl}
\bigskip

\begin{abstract} 
Consider directed polymers in a random environment on the complete graph of size $N$.
This model can be formulated as a product of i.i.d. $N\times N$ random matrices and its large time asymptotics
is captured by Lyapunov exponents and the Furstenberg measure. We detail this correspondence, derive the long-time limit of the model and obtain a co-variant distribution for the polymer path. 

Next, we observe that the model becomes exactly solvable when the disorder variables are located on edges of the complete graph and follow a totally asymmetric stable law of index $\alpha \in (0,1)$. Then, a certain notion of mean height of the polymer behaves like a random walk and we show that the %partition functions at each site are 
height function is distributed around this mean according to an explicit law. Large $N$ asymptotics can be taken in this setting, for instance, for the free energy of the system and for the invariant law of the polymer height with a shift. Moreover, we give some perturbative results for environments which are close to the totally asymmetric stable laws. 

\end{abstract}

%%%%%%%%%%%%%%%%%%%%%%%%%

\date{November 23, 2017}
%{started March 24, 2012}
\maketitle
%%%%%%%%%%%%%%%%%%%%%%%%%%%%%%%%%%%%%%%%%%%%%%%%
\bigskip

\section{Introduction and results}

Directed polymers in random environments were introduced in \cite{Henley-Huse} as a model for the phase separation line of the $2d$ Ising model in the presence of impurities. Since then, they have been the subject of an important body of work both in the mathematics and physics communities. On the mathematics side, their study started with the work \cite{Imbrie-Spencer}, shortly followed by \cite{Bolthausen}. An up-to-date account on the mathematical treatment of directed polymers can be found in \cite{SF}. On the physics side, this model was often studied in connection with growing surfaces. In particular, it was observed that the Kardar-Parisi-Zhang equation \cite{KPZ} can be considered as a continuum version of directed polymers \cite{KZ}.  
\medskip

In its usual discrete version, the model can be formulated in terms of two basic ingredients:
(i) {The polymer paths} are given a-priori by the trajectories of a simple symmetric random walk on $\Z^d$ starting at the origin, whose law we denote by $P$;
(ii) {The environment} is given by a family $\{\eta(t,x):\, t\geq 1,\, x\in \Z^d\}$ of i.i.d. random variables with common distribution $\P$.
Then, given $t\geq 0$, $\beta>0$ and a fixed realization of the environment, we define the polymer measure on nearest-neighbor paths $\bf x$ of length 
$t$ as the probability measure
\begin{eqnarray} \nn
	\mu^{\eta}_{t,\beta}(d{\bf x}) = \frac{1}{Z_{t,\beta}^{\eta}} \exp\{\beta \sum^t_{s=1}\eta(s,{\bf x}_s)\}P(d{\bf x}),
\end{eqnarray}
where $Z^{\eta}_{t,\beta}$ is the normalizing constant.

\vspace{1ex}

In this note, we propose to replace,  in the above definition, $\Z^d$
 with its natural graph structure 
 by the complete graph with $N$ sites. The precise definition of the model is deferred Section \ref{sec:intro}. Alternatively, this can be seen as a polymer with long-range jumps on a finite state space.
 
 \vspace{1ex}

Large finite state spaces have been considered in the literature as approximations of infinite systems, for instance \cite{BrunetDerrida00} for directed polymers on a cylinder, or directed polymers on $n$-trees as in \cite{Cook-Derrida}. In \cite{BrunetDerrida04}, a zero-temperature version of our model was introduced to compute the corrections for large finite systems to continuous limit equations of front propagation.

\vspace{1ex}

There are several additional reasons to consider polymers on other  graphs. First of all, they offer several simplifications compared to the original model on $\Z^d$ or $\R^d$. In this respect, models on the complete graph  can be seen as a mean field approximation, much in the same way the Curie-Weiss model relates to the Ising model. When considering mean-field versions of directed polymers it is more common to refer to polymers on the tree \cite{Buffet-Pule} \cite{DerridaSpohn}, but these models show fundamental differences with the original ones \cite{SF}. 
Our model on the complete graph preserves some features, and can also be seen as a positive temperature version of the last passage percolation studied in \cite{BrunetDerrida04, CQR} from where we draw many ideas.

\vspace{1ex}

This approximation of infinite graphs by finite ones can be made quantitative. For instance, the paper \cite{Eckmann89} considers a particular product of random symplectic matrices corresponding to a random diffusion on the $d$-dimensional discrete torus. Assuming $d \geq 3$ and weak  disorder, it is shown that, as the sidelength of the torus goes to infinity, 
the largest Lyapunov exponent of the product converges to the one of the usual Laplacian on the full lattice.

\vspace{1ex}

In a related spirit, models on arbitrary countable graphs are considered in \cite{CaGuHuMe} where the {\it a priori} law on the path space is an ergodic Markov chain.

\vspace{1ex}

Finally, on the complete graph, we are able to find a law on the environment that makes the model solvable in the sense that the law of the properly normalized partition function can be computed explicitly. This is analogous to the zero-temperature version of the model from \cite{BrunetDerrida04,CQR,Cortines}.
Exactly solvable models are rare in statistical mechanics, but  they yield most informative  results. This one seems to be new. In the case of polymers on the lattice, we can however cite \cite{Timo} for the original model on $\Z$ and \cite{OCY} for a semi-continuous counterpart. 

\vspace{1ex}

The study of polymer models on finite graphs falls in the scope of products of random matrices. More precisely, the free energy is related to the top Lyapunov exponent of such products for which a complete theory has been established in the literature initiated in  \cite{FurstKest60} and developped  in particular in \cite{BouLa85,CN84,Furst63,GuivRaugi85,Hennion97,HennionHerve08,Newman86}. 
This formalism allows us to derive, for example, the existence of the free energy as well as Gaussian fluctuations for the logarithm of the partition function. 
In different direction, we mention \cite{Arnold} which gives a nice account on Perron-Frobenius theory for product of random matrices together with its relation 
to thermodynamic formalism. 
For "infinite matrices" -- i.e. for random positive operators in infinite dimension, limits  may fail to exist due to lack of compactness. The authors in \cite{Bakhtin-Khanin, Sinai}  deal with products of random operators on the whole $\Z^d$, but some localization features -- a priori embedded in the special models  --
make them essentially compact.

\subsection{Directed polymers on the complete graph}\label{sec:intro}
We study the following model of directed polymers on the complete graph with $N$ sites: for $t\geq 1$ integer and $1\leq i, j \leq N$, consider the set of paths  starting at location $i$ at time 0 and ending at $j$ at time $t$,
\begin{eqnarray}\label{eq:path-space}
	J_N(0,i; t,j)=\big\{\bj=(j_0,\cdots,j_t):\, 1\leq j_s \leq N,\, \forall_{1\leq s \leq t-1}; \, j_0=i, \, j_t=j\big\}.
\end{eqnarray}
Let $\{\omega_{i,j}(t):\, 1\leq i,j \leq N,\, t\geq 0\}$ be a family of i.i.d. positive random variables defined on some probability space $(\Omega, {\mathcal A}, \P)$. For a fixed realization of the environment 
and $t\geq 1$, we define the {\em point-to-point} (P2P) polymer partition function
\begin{eqnarray}\label{eq:pf-P2P}
	\cZ_N(0,i;t,j)=\sum_{{\bf j} \in J_N(0,i;t,j)} \prod^t_{s=1}\omega_{j_{s-1},j_s}(s) .
\end{eqnarray}
Viewed as a function of $j$, its logarithm is referred to as the polymer {\em height function}. 
We denote with a $\star$ quantities  $Q(\star)$ which are, depending on the context, unions or sums over locations $j \in \{1,\ldots N\}$ of $Q(j)$.
For instance,
$J_N(0,i;t,\star)=\cup^N_{j=1}J_N(0,i;t,j)$, and $\cZ_N(0,i;t,\star)= \sum_{j=1}^N \cZ_N(0,i;t,j)$ is the so-called {\em point-to-line} 
(P2L) polymer partition function. Similarly, $J_N(0,\star;t,j)=\cup^N_{i=1}J_N(0,i;t,j)$, and $\cZ_N(0,\star;t,j)= \sum_{i=1}^N \cZ_N(0,i;t,j)$ is the 
 {\em line-to-point} (L2P) partition function. 
\medskip

Our analysis will rely on a tight relation between our model and products of random matrices which emerges from the following observation:
defining the  $N \times N$ matrix $\Pi(t)$ as the product
\begin{equation}\nn
\Pi(t)=\fxi(1)\fxi(2)\cdots\fxi(t),
\end{equation}
of the matrices $\fxi(t)=[\omega_{i,j}(t)]_{i,j}$, we see that the P2P partition function \eqref{eq:pf-P2P} is the $(i,j)$-entry of $\Pi(t)$, 
\begin{equation}
\label{eq:P2P=Pi}
\cZ_N(0,i;t,j)= \Pi(t)_{i,j} \;.
\end{equation}
(We do not indicate in the notation the dependence in $N$ of $\fxi$ and $ \Pi$, although we will let $N$ go to infinity at some stage.)
%We write shortly $v>0$ for vectors $v \in (\R_+^*)^N$, they  are written as column vectors.
Let us denote by $Z_N(t)$ the (column) 
vector given by the L2P partition functions 
$$ Z_N(t)= \big(\cZ_N(0,\star; t,1),\cdots,\cZ_N(0,\star; t,N)\big)^*$$ 
where $M^*$ denotes the transposed of the matrix $M$. Our convention in the paper is that all vectors are column vectors, and we write $v(j)$ for the $j$-th coordinate of $v$, so that $Z_N(t,j)\equiv \cZ_N(0,\star; t,j)$. From \eqref{eq:P2P=Pi}, we obtain
\begin{eqnarray} \label{eq:recursion-basic-3}
Z_N(t)^* = {\bf 1}^* \Pi(t),
\end{eqnarray}
where ${\bf 1}^*$ denotes the $N$-dimensional row vector with all entries equal to $1$. Similarly, the vector of P2L partition functions can be written as 
$\Pi(t) {\bf 1}$.
\smallskip

This point of view allows us, among other things, to relate the free energy of the model to the  Lyapunov exponent of products of i.i.d. random matrices. 
In our case the matrices have an additional feature -- entries are i.i.d. -- but the theory applies to the general case under mild assumptions. 
At this point, it is convenient to take \eqref{eq:recursion-basic-3} as the starting point of our analysis and consider the slightly more general framework of the recursion
\begin{eqnarray}\label{eq:recursion-general}
Z_N(t)^* 
= Z_N(t-1)^*\fxi(t)
\end{eqnarray}
allowing general initial conditions $Z_N(0)\in \R^N_+ \setminus \{0\}$..
\vspace{1ex}

\medskip

%%%%%%%%%%%%%%%%%%%%%%%%%%%%%%%%%%%
%%%%%%%%%%%%%%%%%%%%%%%%%%%%%%%%%%%

We follow the formalism of \cite{Hennion97, HennionHerve08} based on the action of products of random matrices on projective spaces. For $v\in \R^N_+ \setminus \{0\}$ and $\alpha>0$, define the $\alpha$-norm of $v$ as $||v||_{\alpha}=(\sum^N_{j=1}v_i^{\alpha})^{1/\alpha}$. Of course, this quantity is a norm only in the case $\alpha\geq 1$. Next, we introduce the $\alpha$-symplex
$$\bar B_{\alpha}=\{v\in \R^N_+:\, ||v||_{\alpha}=1\},$$
together with the projection $\Psi_{\alpha}(v)=\frac{v}{||v||_{\alpha}}$ from $\R_+^N \setminus \{0\}$ onto $\bar B_\alpha$.

For  $v\in \R^N_+ \setminus \{0\}$ and $\fxi$ an $N$-by-$N$ matrix with positive entries, we define the product $\cda $ by
$$\fxi \cda v := \frac{\fxi v}{||\fxi v||_{\alpha}}\in \bar{B}_{\alpha}.$$
We will drop the subscripts and superscripts from the notation when $\alpha=1$ and write
$$\bar B := \bar B_1,\quad \fxi \cdot v:= \fxi \overset{1}{\cdot} v.$$
Finally, define
\begin{eqnarray} \label{def:XNA}
	X_{N,\alpha}(t) := \Psi_{\alpha}(Z_N(t))=\frac{Z_N(t)}{||Z_N(t)||_{\alpha}} \in \bar B_{\alpha},
\end{eqnarray}
and, again, write $X_N:=X_{N,1}$. This leads to the simple decomposition
\begin{eqnarray}\label{eq:decomposition}
	\log Z_{N}(t,i) = \log ||Z_N(t)||_{\alpha}+\log X_{N,\alpha}(t,i)
\end{eqnarray}
Note that, by the recursion \eqref{eq:recursion-general} and homogeneity, we have
\begin{eqnarray*}
	X_{N,\alpha}(t) = \Psi_{\alpha}\big( \fxi(t)^*Z_{N,\alpha}(t-1) \big) = \Psi_\alpha\big(\fxi(t)^*X_{N,\alpha}(t-1)\big) ,
\end{eqnarray*}
showing that $\{X_{N,\alpha}(t):\, t\geq 0\}$ is a Markov chain. We list further important properties of this chain in the next theorem. 

\subsection{Product of random matrices and polymer model structure}

{ All three results in this subsection come as applications of the general theory of product of independent random matrices. 
They are overlooked in this context, although they provide a complete understanding of the model for a fixed $N$. }
For  integers  $s < t$, let
 \begin{equation*}
\Pi(s,t)= \fxi(s+1) \ldots \fxi(t), \qquad \Pi(t,t)={\rm I}_N, \qquad \Pi(t)=\Pi(0,t).
\end{equation*}

%%%%%%%%%%%%%%%%%%%%%%%%%%%%%%%%%%%
\begin{teo}\label{thm:structure}
Let $Z_N(0)\in \R^N_+$.
\begin{enumerate} 
	\item For all $\alpha>0$, the recursion \eqref{eq:recursion-general} with initial condition $Z_N(0)$ defines by \eqref{def:XNA}
	 a time-homogeneous Markov chain $\{X_{N,\alpha}(t):\, t\geq 0\}$ with values in $\bar B_{\alpha}$.
	
	\item There exists an event $\Omega_0$ with $\P(\Omega_0)=1$  such that the (random) limit %$X_{N,\alpha}^{\infty} =X_{N,\alpha}^{\infty} (j)$,
			\begin{eqnarray}\label{eq:Xinf}
				\VNAI= \lim_{t\to\infty} \Pi(t) \cda v,
			\end{eqnarray}
			exists for all $\alpha>0, \omega \in \Omega_0$ and does not depend on $v\in \R^N_+$. Moreover, $\VNAI= \Psi_\alpha(\VNBI)$ for all $\alpha, \beta>0$.
		
	\item Let  $m_{N,\alpha}$ denote the law of  $\VNAI$.
	The chain $(X_{N,\alpha}(t))_{t\geq 0}$ with initial law $m_{N,\alpha}$ is stationary and ergodic.
	
	\item Denote by $\theta_s$ the shift on $\Omega$ by $s \in \Z$,  $\theta_s \omega (t) = \omega (s+t)$, and set 
	\begin{eqnarray}\label{eq:Xinfshift}
				\VNAI(s) := \VNAI\circ \theta_{s} = \lim_{t\to\infty} \Pi(s,t) \cda v ,
			\end{eqnarray}
	and $\VNAI(s,j)$ the $j$-th component of this vector.
	(In particular,  $\VNAI(0)=\VNAI$.) Then,
\begin{eqnarray}\label{eq:Xinfinvariant}
	\fxi(0) \cda \VNAI= \VNAI(-1)
\end{eqnarray}

\end{enumerate}
\end{teo}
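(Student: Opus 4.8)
The plan is to derive all four statements from the general theory of products of i.i.d.\ random matrices with a.s.\ strictly positive entries; the sole structural input used is that each $\fxi(t)$ has positive entries (true here since the $\omega_{i,j}(t)$ are positive), so no moment assumption on the environment is needed. Statement (1) is essentially already in the text: from $Z_N(t)=\fxi(t)^*Z_N(t-1)$ and homogeneity of $\Psi_\alpha$ one gets $X_{N,\alpha}(t)=\Psi_\alpha\big(\fxi(t)^*X_{N,\alpha}(t-1)\big)$, and since $\fxi(t)$ is independent of $\fxi(1),\dots,\fxi(t-1)$ with a $t$-independent law, $(X_{N,\alpha}(t))_{t\ge0}$ is a time-homogeneous Markov chain on the compact set $\bar B_\alpha$, with transition kernel the push-forward of the common law of $\fxi^*$ under $v\mapsto\fxi^*\cda v$. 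I record here, for use in (3), that because the entries of $\fxi$ are i.i.d.\ one has $\fxi\eqlaw\fxi^*$, so this kernel also equals the one induced by $v\mapsto\fxi\cda v$.

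For (2) the engine is Birkhoff's contraction theorem for the Hilbert projective metric $d_H$. Since every $\fxi(t)$ maps $\R^N_+\setminus\{0\}$ into the interior of the positive cone, it acts as a strict $d_H$-contraction with coefficient $\tanh(\Delta(\fxi(t))/4)<1$ a.s., $\Delta$ denoting the projective diameter of the image; consequently the sets $K_t:=\{\Pi(t)\cda v:\,v\in\R^N_+\setminus\{0\}\}$ form a nested decreasing sequence of compact subsets of $\bar B_\alpha$ with ${\rm diam}_{d_H}(K_t)\le\big(\prod_{s=1}^{t-1}\tanh(\Delta(\fxi(s))/4)\big)\,\Delta(\fxi(t))$. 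Using $\log\tanh(x/4)\le-e^{-x/2}$ and the strong law of large numbers for the i.i.d.\ $(0,1]$-valued variables $e^{-\Delta(\fxi(s))/2}$, the product decays at least geometrically a.s.; and since $\Delta(\fxi(1))<\infty$ a.s.\ there is a constant $C$ with $\P(\Delta(\fxi(1))\le C)>0$, so $\Delta(\fxi(t))\le C$ for infinitely many $t$ a.s.\ by the second Borel--Cantelli lemma. Thus ${\rm diam}_{d_H}(K_t)\to0$ along a subsequence, hence for all $t$ by monotonicity; since $d_H$ and the Euclidean distance are Lipschitz-equivalent on $K_1$, a compact subset of the interior of $\bar B_\alpha$ containing every $K_t$, the points $\Pi(t)\cda v$ are Cauchy uniformly in $v$ and converge to a limit $\VNAI\in\bar B_\alpha$ not depending on $v$, on a single full-measure event $\Omega_0$, which we may take shift-invariant. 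Finally $\Pi(t)\cda v=\Psi_\alpha(\Pi(t)\cdot v)$ for all $t$ (both normalize the same ray), so the limit exists on the same $\Omega_0$ for every $\alpha>0$, and, the $\VNBI$ all lying on one ray, $\VNAI=\Psi_\alpha(\VNBI)$.

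Statement (4) follows by direct computation on $\Omega_0$: $\VNAI(-1)=\VNAI\circ\theta_{-1}=\lim_t\Pi(-1,t)\cda v=\lim_t\fxi(0)\cda\big(\fxi(1)\cdots\fxi(t)\cda v\big)=\fxi(0)\cda\VNAI$, the last equality by continuity of $v\mapsto\fxi(0)\cda v$ at $\VNAI$ (valid since $\fxi(0)$ has positive entries, so the image is nonzero). For (3), iterating (4) along the shift gives $\VNAI(-(t+1))=\fxi(-t)\cda\VNAI(-t)$; since $\VNAI(-s)$ depends only on $\{\fxi(r):\,r\ge -s+1\}$, the matrix $\fxi(-t)$ is independent of $\big(\VNAI(0),\dots,\VNAI(-t)\big)$, and therefore $(\VNAI(-t))_{t\ge0}$ is a Markov chain for the kernel $v\mapsto\fxi\cda v$ — the transition kernel of $X_{N,\alpha}$ by the first paragraph — started from $\VNAI(0)\sim m_{N,\alpha}$. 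Since $(\VNAI(-t))_{t\ge0}=(\VNAI\circ\theta_{-t})_{t\ge0}$ is a factor of the ergodic measure-preserving system $(\Omega,\P,\theta)$, this process is stationary and ergodic; in particular $m_{N,\alpha}$ is invariant for the kernel, so the $m_{N,\alpha}$-started chain $X_{N,\alpha}$, having the same law as this process, is itself stationary and ergodic.

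The point needing most care is the a.s.\ convergence in (2): one is iterating a.s.-strict contractions with random, possibly heavy-tailed contraction coefficients, so the shrinking of the projective diameters ${\rm diam}_{d_H}(K_t)$ has to be extracted from their monotonicity combined with the second Borel--Cantelli lemma rather than from any control on $\Delta(\fxi(t))$ itself, and $d_H$ must be compared to the ambient metric uniformly up to the boundary of $\bar B_\alpha$. All this is standard and is packaged in \cite{Hennion97,HennionHerve08}, whose hypotheses I would simply verify. A secondary subtlety is the transpose bookkeeping in (3)--(4): it is the symmetry $\fxi\eqlaw\fxi^*$, coming from the i.i.d.\ entries, that identifies the law $m_{N,\alpha}$ of the forward-product limit $\VNAI$ with the stationary (Furstenberg) law of the chain $X_{N,\alpha}$ itself, rather than with that of the transposed cocycle.
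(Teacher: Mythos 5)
Your proposal is correct and is essentially the approach the paper takes (stochastic contractivity in the projective cone, plus the shift representation for the stationary and co-variant structure), but with two small technical differences worth recording. For the a.s.\ convergence in (2), you work with the genuine Hilbert projective metric, so the Birkhoff estimate leaves a terminal factor $\Delta(\fxi(t))$ in the diameter bound that you then suppress by the second Borel--Cantelli lemma; the paper instead uses a \emph{bounded} distance $d(x,y)=\phi\big(m(x,y)m(y,x)\big)\in[0,1]$ on $\bar B$ and the associated contraction coefficient $c(g)=\sup_{x,y}d(g\cdot x,g\cdot y)\in[0,1]$, so that $c(\Pi(t))\le\prod_{s}c(\fxi(s))$ with each factor $<1$ a.s.\ and the one-sided SLLN applied to $\log c(\fxi)<0$ closes the argument without any Borel--Cantelli. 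Both routes are valid; the paper's is a touch cleaner because the terminal factor never appears. For (3), the paper derives invariance of $m_{N}$ by transposing the relation \eqref{eq:Xinfinvariant} and passing to the backward limit $\VINI$, then cites Lemma 3.3 of \cite{Hennion97} for ergodicity; your argument via the identity $\VNAI(-(t+1))=\fxi(-t)\cda\VNAI(-t)$, the independence of $\fxi(-t)$ from $\VNAI(0),\dots,\VNAI(-t)$, and the observation that $(\VNAI\circ\theta_{-t})_{t\ge0}$ is a factor of the ergodic shift, is a genuinely self-contained alternative and in my view slightly more transparent. You also correctly flag the transpose bookkeeping: the chain $X_{N,\alpha}$ has kernel $v\mapsto\fxi^*\cda v$ while the shift realization gives $v\mapsto\fxi\cda v$, and the symmetry $\fxi\eqlaw\fxi^*$ coming from i.i.d.\ entries is exactly what identifies the two (the paper uses the same symmetry, via $\fxi(0)^*\eqlaw\fxi(0)$ and the backward limit $\VINI$). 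No gaps.
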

This is proved in Section \ref{sec:2.1} and the Appendix.
\vspace{1ex}
%%%%%%%%%%%%%%%%%%%%%%%%%%%%%%%%%%%
%%%%%%%%%%%%%%%%%%%%%%%%%%%%%%%%%%%

Our next result states the almost sure existence of the free energy as well as Gaussian fluctuations for the logarithm of the partition function. From \eqref{eq:decomposition}, note that the asymptotics of $\log Z_N(t,i)$ is essentially given by that of the first term. That this limit is independent of $\alpha$ comes from the observation that, for each $\alpha>0$, there exists a constant $c_N(\alpha)\in(1,\infty)$ such that
$$c_N(\alpha)^{-1} ||v||_{1} \leq ||v||_{\alpha} \leq c_N(\alpha)\,||v||_{1}, \quad \forall \, v\in \R^N_+.$$
%%%%%%%%%%%%%%%%%%%%%%%%%%%%%%%%%%%
%%%%%%%%%%%%%%%%%%%%%%%%%%%%%%%%%%%
\begin{teo} \label{thm:gene-intro} Fix $N$, assume that the $\omega$'s are not constant and that $\E |\log \omega_{i,j}|^{2+\delta}<\8$ for some positive $\delta$. Then, there exist numbers $v_N$ and $\sigma_N >0 $
such that, for all $j=1,\ldots N$, 
$$
\lim_{t \to \8} \frac{1}{t} \log Z_N(t,j) = v_N \qquad {\rm a.s.},
$$
and
$$
\frac{1}{\sqrt t} \big( \log Z_N(t,j) - v_N t \big) \cvlaw {\mathcal N} (0, \sigma_N^2) \qquad {\rm as}\,\, t\to\infty.
$$
Furthermore, with $V^{\infty}_{N,\alpha}$ from \eqref{eq:Xinf}, 
\begin{eqnarray} \label{eq:vN=}
	v_N = \E\left[ \log || \fxi(0) \VNAI||_{\alpha}\right].
\end{eqnarray}
\end{teo}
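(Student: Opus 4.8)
The plan is to reduce both assertions to classical limit theorems for products of i.i.d.\ positive matrices --- Furstenberg--Kesten for the law of large numbers, and the spectral-gap/quasi-compactness machinery of \cite{Hennion97, HennionHerve08} for the Gaussian fluctuations --- through the decomposition \eqref{eq:decomposition}. By \eqref{eq:decomposition}, $\log Z_N(t,j)=\log\|Z_N(t)\|_\alpha+\log X_{N,\alpha}(t,j)$, so it suffices to prove both asymptotics for $\log\|Z_N(t)\|_\alpha$ and to show that $\tfrac1{\sqrt t}\log X_{N,\alpha}(t,j)\to0$ a.s. For the latter, $X_{N,\alpha}(t)\in\bar B_\alpha$ gives $X_{N,\alpha}(t,j)\le1$, while strict positivity of the entries of $\fxi(t)$ yields the crude lower bound $X_{N,\alpha}(t,j)\ge c_N(\alpha)^{-1}N^{-2}\min_{k,l}\omega_{k,l}(t)/\max_{k,l}\omega_{k,l}(t)$, depending on $\fxi(t)$ alone. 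Hence $|\log X_{N,\alpha}(t,j)|\le C_N+M_t$ with $M_t:=|\log\min_{k,l}\omega_{k,l}(t)|+|\log\max_{k,l}\omega_{k,l}(t)|$ i.i.d.\ and square-integrable under $\E|\log\omega_{i,j}|^{2+\delta}<\8$, so $\sum_t\P(M_t>\eps\sqrt t)\le\E[M_1^2]/\eps^2<\8$ and $M_t=o(\sqrt t)$ a.s.\ by Borel--Cantelli; this term thus affects neither the a.s.\ limit nor the fluctuations.

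Iterating $Z_N(s)=\fxi(s)^*Z_N(s-1)$ and using the homogeneity of $\|\cdot\|_\alpha$ and $\Psi_\alpha$ gives the telescoping identity $\log\|Z_N(t)\|_\alpha=\log\|Z_N(0)\|_\alpha+\sum_{s=1}^t\phi\big(X_{N,\alpha}(s-1),\fxi(s)\big)$ with $\phi(x,A):=\log\|A^*x\|_\alpha$, exhibiting $\log\|Z_N(t)\|_\alpha$ as an additive functional of the Markov chain of Theorem \ref{thm:structure}; moreover $|\phi(x,A)|\le C_N+M(A)$ with $M$ as above, so $\phi$ is integrable. When the chain is started from its stationary law $m_{N,\alpha}$, Theorem \ref{thm:structure}(3) and Birkhoff's ergodic theorem give $\tfrac1t\log\|Z_N(t)\|_\alpha\to v_N:=\E_{m_{N,\alpha}}[\phi(X_{N,\alpha}(0),\fxi(1))]$ a.s.; the limit does not depend on the initial datum because $\|Z_N(t)\|_\alpha$ is comparable, up to multiplicative constants given by the components of $Z_N(1)>0$, to $\mathbf 1^*\Pi(t)\mathbf 1$ (equivalently, by Furstenberg--Kesten applied to $\Pi(t)^*$), which also yields the conclusion simultaneously for every $j$ and for the P2L datum $Z_N(0)=\mathbf 1$. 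To recognise $v_N$ as $\E[\log\|\fxi(0)\VNAI\|_\alpha]$, one rewrites $\log\|\Pi(t)v\|_\alpha=\sum_{s=1}^t\log\big\|\fxi(s)\,\Psi_\alpha(\fxi(s{+}1)\cdots\fxi(t)v)\big\|_\alpha$, lets $t\to\8$ term by term so that $\Psi_\alpha(\fxi(s{+}1)\cdots\fxi(t)v)\to\VNAI(s)$, and applies the ergodic theorem to the stationary sequence $s\mapsto\log\|\fxi(s)\VNAI(s)\|_\alpha$ (using \eqref{eq:Xinfinvariant}); transpose-invariance of the top Lyapunov exponent, legitimate since $\fxi$ and $\fxi^*$ share their law, identifies $\lim\tfrac1t\log\|\Pi(t)v\|_\alpha$ with $\lim\tfrac1t\log\|Z_N(t)\|_\alpha=v_N$.

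For the Gaussian fluctuations I would quote the central limit theorem for additive functionals $\sum_{s\le t}\phi(X_{N,\alpha}(s-1),\fxi(s))$ of the chain $X_{N,\alpha}$ from \cite{Hennion97, HennionHerve08}: the Birkhoff--Hopf contraction in Hilbert's projective metric furnishes a spectral gap for the transition operator of $X_{N,\alpha}$ on a suitable space of H\"older functions, and $\E|\log\omega_{i,j}|^{2+\delta}<\8$ controls $\E|\phi|^{2+\delta}$, whence $\tfrac1{\sqrt t}\big(\log\|Z_N(t)\|_\alpha-v_Nt\big)\cvlaw\mathcal N(0,\sigma_N^2)$ for some $\sigma_N^2\ge0$; combined with the first paragraph this gives the stated convergence for $\log Z_N(t,j)$. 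It remains to show $\sigma_N>0$: if $\sigma_N=0$, the theory forces $\phi-v_N$ to be a measurable coboundary for the chain, hence $\log Z_N(t,j)-v_Nt$ bounded in probability, which is excluded by the hypothesis that the $\omega$'s are non-constant --- concretely, the independent summand $\log\omega_{j,j}(t)$ occurring in $\log Z_N(t,j)$ cannot be absorbed into a coboundary.

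The decomposition and the law of large numbers are routine; the delicate parts are checking that the abstract CLT of \cite{HennionHerve08} applies verbatim in the present $\alpha$-(quasi-)norm setting --- for $\alpha<1$, $\|\cdot\|_\alpha$ is not a norm and one must work with $\|\cdot\|_1$, transferring estimates through the constants $c_N(\alpha)$ --- and establishing the non-degeneracy $\sigma_N>0$ by a clean coboundary-exclusion argument. I expect the latter to be the real crux.
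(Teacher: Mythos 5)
Your overall strategy is the paper's: reduce everything to the limit theorems for products of positive random matrices in \cite{Hennion97, HennionHerve08}. The law of large numbers, the transfer from $\log\|Z_N(t)\|_\alpha$ to the individual coordinates, and the identification \eqref{eq:vN=} are all fine and essentially coincide with the paper's argument (the paper uses Kingman's subadditive theorem plus Lemma \ref{lem:hennion} where you use the decomposition \eqref{eq:decomposition} with a Borel--Cantelli control of the projective part, and its derivation of \eqref{eq:vN=} is exactly your telescoping-plus-ergodic-theorem computation, using $\fxi \eqlaw \fxi^*$ at the end). The CLT itself is quoted from the same source in both proofs.

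The genuine gap is the strict positivity $\sigma_N>0$, which you yourself flag as the crux but then dispatch with an assertion rather than a proof. First, $\log Z_N(t,j)$ does not contain $\log\omega_{j,j}(t)$ as an ``independent summand'': $Z_N(t,j)=\sum_i Z_N(t-1,i)\,\omega_{i,j}(t)$, so no such additive decomposition exists. More importantly, the presence of fresh, nondegenerate randomness at every step does not by itself exclude a coboundary: degeneracy means precisely that $\log\|\fxi(t)^{*}x\|_\alpha - v_N=\psi(x)-\psi\big(\fxi(t)^{*}\!\cda\, x\big)$ for $m_{N,\alpha}$-a.e.\ $x$ and a.e.\ $\fxi(t)$, and such identities are exactly the mechanism by which nondegenerate increments telescope into a tight sum, so ``the increment has positive conditional variance'' is not a contradiction. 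The paper settles this point by citing Corollary 3 and Theorem 5 of \cite{Hennion97}: $\sigma_N=0$ forces tightness of $\big(e^{-tv_N}\|\Pi(t)\|_1\big)_{t\ge 1}$, which is equivalent to a geometric property of the support of the law of $\fxi$ that fails for non-constant i.i.d.\ entries. If you want to stay with your coboundary route, the usable feature of the hypothesis is that the support of the law of $\fxi$ contains two \emph{proportional} matrices, e.g.\ the constant matrices with all entries $a$ and all entries $b$ for distinct $a,b$ in the support of $\omega$: both map every $x$ to the direction of ${\bf 1}$ while producing values of $\log\|g^{*}x\|_\alpha$ differing by $\log(b/a)\neq 0$, contradicting the displayed identity. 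Even then one must justify evaluating an a.e.\ identity at these particular support points (a regularity/support argument, which is what Hennion's Theorem 5 packages), so this step needs to be written out or replaced by the citation the paper uses.
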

We give a proof in Section \ref{sec:FELE}. 
\vspace{1ex}
%%%%%%%%%%%%%%%%%%%%%%%%%%%%%%%%%%%
%%%%%%%%%%%%%%%%%%%%%%%%%%%%%%%%%%%

We now turn to the asymptotic of the polymer measure. The P2L polymer measure starting at $i$ with time-horizon $T$, denoted by 
$P^\omega_{0,i;T,\star}$, is the (random) probability measure on $J_N(0, i; T, \star)$ given by
\begin{eqnarray}\label{eq:polymer-measure}
	P^\omega_{0,i;T,\star}\big(\, \bj = (j_0,\cdots, j_T)\big) =%\frac{ {\bf 1}_{j_0=i}}{Z_N(0,i;T,\star)} 
	\frac{ 1}{Z_N(0,i;T,\star)} \;\prod^T_{s=1}\omega_{j_{s-1},j_s}(s)\;.
\end{eqnarray}
%%%%%%%%%%%%%%%%%%%%%%%%%%%%%%%%%%%
Similarly to Theorem \ref{thm:structure}, there exists an almost-sure limit to the ``backwards-in-time" product
\begin{equation} \label{eq:Xinfreverse}
\VINAI(s) 	 = \lim_{t\to -\infty} \Pi(t,s)^* \cda v
\end{equation}
which does not depend on $v \in \R_+^N$.  Since $\fxi(0)^* \eqlaw \fxi(0)$, we have
\begin{equation} \label{eq:Xinfreverselaw}
\VINAI (s)	 \eqlaw \VNAI .
\end{equation}
%%%%%%%%%%%%%%%%%%%%%%%%%%%%%%%%%%%
Our second set of results states the existence of an infinite volume polymer measure together with a co-variant law: define the random probability measure $\nu_N (t,\cdot)$ on 
$\{1,\ldots N\}$ by
\begin{equation} \label{def:co-var}
\nu_N (t, j):= \frac{\VINAI(t,j) \VNAI(t,j) }{ \sum_{k=1}^N \VINAI(t,k) \VNAI(t,k)} = \frac{\VINI(t,j) \VNI(t,j) }{ \sum_{k=1}^N \VINI(t,k) \VNI(t,k)}\; ,
\end{equation}
since the ratio in the second term does not depend on $\alpha$, by Theorem \ref{thm:structure}, point 2.
In words, the co-variant law is proportional to the doubly infinite product of weights over polymers (from times $-\8$ to $+\8$) which take the value $j$ at time $t$.
%%%%%%%%%%%%%%%%%%%%%%%%%%%%%%%%%%%
\begin{teo} \label{prop:infinitepolymer}
\begin{enumerate}
\item \label{it:(i)}For almost every environment  $\omega$, the polymer measure $P^\omega_{0,i;T,\star}$ %starting form site $i$ at time $t=0$ 
converges as $T \to \8$  to the (time-inhomogeneous) Markov chain with $P^{\omega}(j_0=i)=1$ and transition probabilities given by
\begin{eqnarray} \label{eq:polymerinfini}
P^\omega( j_{t+1}=\ell \big \vert j_t=k) &=&
\frac{\omega_{k,\ell}(t\!+\!1)\VNI(t\!+\!1,\ell) }{\sum^N_{\ell'=1}\omega_{k,\ell'}(t\!+\!1)\VNI(t\!+\!1,\ell')} \\
\label{eq:polymerinfini2}
&=& \frac{1}{\| \fxi(t\!+\!1) \VNI(t\!+\!1) \|_1} \times
\frac{\omega_{k,\ell}(t\!+\!1)\VNI(t\!+\!1,\ell) }{\VNI(t,k) }
\end{eqnarray}
 for $t\geq 0, k, \ell \in \{1,\ldots N\}$. 
\item \label{it:(ii)}
Let $\omega \in \Omega_0$. For the chain with transition \eqref{eq:polymerinfini} starting at time $s$ with law $\nu_N(s,\cdot)$, we have for $t \geq s$,
 $$
 P^\omega( j_{t}=\ell ) = \nu_N(t, \ell)\;, \qquad \ell =1,\ldots N.
 $$ 
\end{enumerate}
\end{teo}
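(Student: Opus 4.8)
The plan is to handle the two parts separately; throughout one works on a full‑measure event — the event $\Omega_0$ of Theorem \ref{thm:structure}, enlarged if necessary — on which all forward limits $\VNI(s)$ and all backward limits $\VINI(s)$, $s\in\Z$, exist. On this event each such vector has strictly positive entries, since $\fxi(s+1)$ has positive entries and $\VNI(s)=\fxi(s+1)\cdot\VNI(s+1)$, and likewise for the backward family; hence all the quotients below are well defined.

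\emph{Part \eqref{it:(i)}.} First I would compute, for $t<T$, the marginal of $P^\omega_{0,i;T,\star}$ on a path prefix: summing $\prod_{s=1}^T\omega_{j_{s-1},j_s}(s)$ over the free coordinates $j_{t+1},\dots,j_T$ produces, by the time‑shifted \eqref{eq:P2P=Pi}, the factor $\cZ_N(t,j_t;T,\star)=(\Pi(t,T)\mathbf 1)_{j_t}$, so
\[
P^\omega_{0,i;T,\star}\big((j_0,\dots,j_t)\big)=\frac{1}{(\Pi(T)\mathbf 1)_i}\Big(\prod_{s=1}^t\omega_{j_{s-1},j_s}(s)\Big)(\Pi(t,T)\mathbf 1)_{j_t}.
\]
Writing the analogous expression for $(j_0,\dots,j_t,\ell)$ and dividing, the dependence on $j_0,\dots,j_{t-1}$ drops out, so the finite polymer is already a time‑inhomogeneous Markov chain with
\[
P^\omega_{0,i;T,\star}(j_{t+1}=\ell\mid j_t=k)=\frac{\omega_{k,\ell}(t+1)\,(\Pi(t+1,T)\mathbf 1)_\ell}{(\Pi(t,T)\mathbf 1)_k}.
\]
The heart of the matter is the limit $T\to\infty$: decompose $(\Pi(s,T)\mathbf 1)_m=\|\Pi(s,T)\mathbf 1\|_1\,(\Pi(s,T)\cdot\mathbf 1)_m$, use Theorem \ref{thm:structure} point 2 in the shifted form \eqref{eq:Xinfshift} (with $v=\mathbf 1$) to get $\Pi(s,T)\cdot\mathbf 1\to\VNI(s)$, and use $\Pi(t,T)=\fxi(t+1)\Pi(t+1,T)$ to get $\|\Pi(t+1,T)\mathbf 1\|_1/\|\Pi(t,T)\mathbf 1\|_1=\|\fxi(t+1)(\Pi(t+1,T)\cdot\mathbf 1)\|_1^{-1}\to\|\fxi(t+1)\VNI(t+1)\|_1^{-1}$. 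This yields \eqref{eq:polymerinfini2}; the coordinatewise form of $\fxi(t+1)\cdot\VNI(t+1)=\VNI(t)$ — that is, \eqref{eq:Xinfinvariant} composed with $\theta_{t+1}$ — namely $\sum_{\ell'}\omega_{k,\ell'}(t+1)\VNI(t+1,\ell')=\|\fxi(t+1)\VNI(t+1)\|_1\,\VNI(t,k)$, converts it into \eqref{eq:polymerinfini}. Finally, since on any window $\{0,\dots,t\}$ the state space is finite, the initial law is the fixed point mass at $i$, and the one‑step kernels converge, the marginals of $P^\omega_{0,i;T,\star}$ converge to those of the Markov chain with transitions \eqref{eq:polymerinfini}; this is the asserted convergence.

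\emph{Part \eqref{it:(ii)}.} It suffices to prove the one‑step propagation, $P^\omega(j_t=\cdot)=\nu_N(t,\cdot)\Rightarrow P^\omega(j_{t+1}=\cdot)=\nu_N(t+1,\cdot)$, and then induct on $t\ge s$. Feeding \eqref{eq:polymerinfini2}, the identity just obtained, and $\nu_N(t,k)\propto\VINI(t,k)\VNI(t,k)$ into $P^\omega(j_{t+1}=\ell)=\sum_k\nu_N(t,k)P^\omega(j_{t+1}=\ell\mid j_t=k)$, the factor $\VNI(t,k)$ cancels and one is left with $P^\omega(j_{t+1}=\ell)\propto\VNI(t+1,\ell)\sum_k\omega_{k,\ell}(t+1)\VINI(t,k)=\VNI(t+1,\ell)(\fxi(t+1)^*\VINI(t))_\ell$. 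The backward analogue of the invariance identity, $\fxi(t+1)^*\cdot\VINI(t)=\VINI(t+1)$ — which follows from the definition \eqref{eq:Xinfreverse} of $\VINI$ by the same cocycle argument that gives \eqref{eq:Xinfinvariant} — shows that $(\fxi(t+1)^*\VINI(t))_\ell$ is a fixed positive multiple of $\VINI(t+1,\ell)$. Hence $P^\omega(j_{t+1}=\ell)\propto\VINI(t+1,\ell)\VNI(t+1,\ell)$; being a probability vector on the finite set $\{1,\dots,N\}$ with positive total mass, it must coincide with the normalization of that vector, which is exactly $\nu_N(t+1,\ell)$.

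\emph{Where the work is.} The only genuinely analytic step is the passage $T\to\infty$ in Part \eqref{it:(i)}, and even that is immediate once Theorem \ref{thm:structure} point 2 is available together with the observation that the ratio of $1$‑norms telescopes as above; everything else is algebra with normalizing constants. The points needing care are: (a) staying on the full‑measure event where all forward and backward limits exist and are strictly positive, which is what makes the ratios and the kernel \eqref{eq:polymerinfini} meaningful; (b) fixing the meaning of ``convergence of the polymer measure'', which I take to be convergence of finite‑window marginals, reduced to the triviality that finite compositions of stochastic matrices on a finite state space depend continuously on the entries; and (c) the one‑step invariance for $\VINI$, which is not literally \eqref{eq:Xinfinvariant} but is proved identically from the reversed products \eqref{eq:Xinfreverse}. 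The only mildly slick move is in Part \eqref{it:(ii)}: rather than computing the normalizer $\sum_k\VINI(t+1,k)\VNI(t+1,k)$, one reads the answer off from proportionality; equivalently one could verify by hand that $\big(\sum_k\VINI(t+1,k)\VNI(t+1,k)\big)\|\fxi(t+1)^*\VINI(t)\|_1=\big(\sum_k\VINI(t,k)\VNI(t,k)\big)\|\fxi(t+1)\VNI(t+1)\|_1$.
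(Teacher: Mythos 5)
Your proposal is correct and takes essentially the same approach as the paper: part (1) rests on the Markov structure of the finite-horizon measure and the convergence $\Pi(t+1,T)\cdot{\bf 1}\to\VNI(t+1)$ combined with the shifted invariance $\fxi(t+1)\cdot\VNI(t+1)=\VNI(t)$, and part (2) on the backward invariance $\fxi(t+1)^*\cdot\VINI(t)=\VINI(t+1)$. The only cosmetic differences are that you obtain \eqref{eq:polymerinfini2} before \eqref{eq:polymerinfini} (the paper does the reverse) and that you replace the paper's explicit normalization identity \eqref{eq:identity} by a proportionality argument, which you correctly note is equivalent.
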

This is proved in Section \ref{sec:infvolmeas}.
\vspace{1ex}
%%%%%%%%%%%%%%%%%%%%%%%%%%%%%%%%%%%
%%%%%%%%%%%%%%%%%%%%%%%%%%%%%%%%%%%

Note that, if $Z_N(0,\cdot;t,\star)$ denotes the vector $(\mathcal{Z}_N(0,1;t,\star),\cdots, \mathcal{Z}_N(0,N;t,\star))^*$, then
\begin{eqnarray}
	Z_N(0,\cdot;t;\star) = \Pi(t) \, {\bf 1},
\end{eqnarray} 
where $\Pi(t)$ is defined in \eqref{eq:recursion-basic-3}. This representation allows us to define a polymer measure with more general initial conditions. The above theorem can be easily adapted to this setting.
%%%%%%%%%%%%%%%%%%%%%%%%%%%%%%%%%%%

\begin{rmk} 
The law $m_{N}=m_{N,1}$ is the long-time limit of the endpoint distribution, which appears as a fixed point of a transfer operator on $\Z^d$ for the nearest neighbor graph in \cite{Bates-Chatterjee}, and for long range graphs in \cite{Bates}.
(Of course, for a finite state space, there is no question of localization and the disorder is always strong.)
The previous theorems provide much more information in this simplified framework.
\end{rmk}

%[PEUT-ON relier TH.1.3 a Bates-Chatterjee ???] 
%%%%%%%%%%%%%%%%%%%%%%%%%%%%%%%%%%%
%%%%%%%%%%%%%%%%%%%%%%%%%%%%%%%%%%%
%%%%%%%%%%%%%%%%%%%%%%%%%%%%%%%%%%%
\subsection{Case of $\alpha$-Stable Environments}

We consider now the particular cases when  the environment follows a stable law of index $\alpha \in (0,1)$, see \cite{BertoinS, Durrett}.  The law $\Sa$ is supported on $\R_+$ and can be defined via its Laplace transform: if $S$ is distributed according to $\Sa$, then
\begin{equation}
\E e^{-\la S}= e^{- \la^{\alpha}}, 
\end{equation}
for all $\lambda \geq 0$.
In particular, if $S_1,\cdots,S_N$ are $N$ independent $\Sa$-distributed random variables, then
\begin{equation} \label{eq:stabilite}
N^{-1/\alpha} \sum_{i=1}^N S_i \eqlaw \Sa,
\end{equation}
and, more generally,
\begin{eqnarray}\label{eq:stabilite2}
	\sum^N_{i=1} a_i S_i \eqlaw \Sa,
\end{eqnarray}
provided $\sum^N_{i=1}a_i^{\alpha}=1$ and $a_i \geq 0$. The tail of $\Sa$ is known to decay polynomially,
\begin{eqnarray*}
	\P[S > x] \sim \frac{1}{\Gamma(1-\alpha)}x^{-\alpha},
\end{eqnarray*}
as $x \to \8$.
Furthermore, $\Sa$ is the limit of properly normalized sums of i.i.d. random variables exhibiting similar decay. 
\vspace{1ex}

It turns out that this choice of environment makes the model solvable, in the sense that the law of the (properly normalized) partition function is explicit. 
The decomposition \eqref{eq:decomposition} has to be slightly modified to reveal the rich structure of this version of the model: let
\begin{equation} \label{def:Phi_N}
	S_N(t,j):=\frac{Z_N(t,j)}{|| Z_N(t-1)||_{\alpha}},\quad \phi_N(t):=\log || Z_N(t)||_{\alpha},
\end{equation}
so that
\begin{eqnarray} \label{eq:indepsum}
	\log Z_N(t,j) = \log S_N(t,j) + \phi_N(t-1).
\end{eqnarray}
There are many reasonable manners to measure the mean height of the polymer. However the $\alpha$-norm yields an unexpectedly simple description.
The full probabilistic structure of the stable case is detailed in the next theorem.
\begin{teo}\label{thm:structure-stable}
	Suppose $\{\omega_{i,j}(t):\, t\geq 1,\, 1\leq i,j \leq N\}$ is an i.i.d. family of $\Sa$-distributed random variables. Then,
	\begin{enumerate}
		\item $\{S_N(t,j):\, t\geq 1,\, 1\leq j \leq N\}$ is an i.i.d. family of $\Sa$-distributed random variables. Moreover, the terms in the sum \eqref{eq:indepsum} are independent. 
		\item Starting from any state, the Markov chain $X_{N,\alpha}(\cdot)$ reaches equilibrium instantaneously. In fact, 
		$(X_{N,\alpha}(t))_{t \geq 1}$ is an i.i.d. sequence in $\bar B_\alpha$ for all starting point $X_{N,\alpha}(0)$.
		\item $\{\phi_N(t):\, t\geq 1\}$ is a random walk with i.i.d jumps $\{\Upsilon_N(t):\, t\geq 1\}$ distributed as
		\begin{equation} \label{eq:accr-stable}
\Upsilon_N \eqlaw  \log \| S_N\|_{\alpha},
\end{equation}
where $S_N$ is %an i.i.d. family of $\Sa$-distributed random variables. 
a $N$-vector with i.i.d.~$\Sa$-distributed coordinates.
	\item $v_N=\E[\Upsilon_N]$, $\sigma^2_N = \mathbb{V}ar[\Upsilon_N]$.
	
	\item The invariant law $m_{N,\alpha}$ has the same distribution as $\frac{S_N}{|| S_N||_\alpha}$. 
	
	\item The sequence %$\{\VNAI(t,j):t\geq 1,\, 1\leq j \leq N\}$
	$\{ (\VNAI(t,j))_{j=1}^N\}_{ t\geq 1}$   is i.i.d. with common distribution $m_{N,\alpha}$.
	\end{enumerate}
\end{teo}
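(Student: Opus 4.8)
The plan is to establish everything from the single structural fact that, for fixed $t$, the vector $Z_N(t)/\|Z_N(t-1)\|_\alpha$ has i.i.d.\ $\Sa$ coordinates independent of $\|Z_N(t-1)\|_\alpha$, and then iterate. First I would prove point (1). Condition on $Z_N(t-1)$ and write $Z_N(t,j) = \sum_{i=1}^N Z_N(t-1,i)\,\omega_{i,j}(t)$ from \eqref{eq:recursion-general}. Set $a_i = Z_N(t-1,i)/\|Z_N(t-1)\|_\alpha$, so that $\sum_i a_i^\alpha = 1$ and $a_i\ge 0$; then $S_N(t,j) = \sum_i a_i\,\omega_{i,j}(t)$. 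By the stability identity \eqref{eq:stabilite2}, conditionally on $Z_N(t-1)$ each $S_N(t,j)$ is $\Sa$-distributed, and since the columns $(\omega_{i,j}(t))_i$ are independent across $j$ and independent of $Z_N(t-1)$, the vector $(S_N(t,1),\dots,S_N(t,N))$ is conditionally i.i.d.\ $\Sa$ \emph{with a law not depending on} $Z_N(t-1)$; hence it is unconditionally i.i.d.\ $\Sa$ and independent of $\sigma(Z_N(t-1))$, in particular independent of $\phi_N(t-1) = \log\|Z_N(t-1)\|_\alpha$. This is the crux; everything else is bookkeeping.

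Point (2) follows since $X_{N,\alpha}(t) = S_N(t,\cdot)/\|S_N(t,\cdot)\|_\alpha$ by \eqref{def:Phi_N} and \eqref{def:XNA}, a deterministic function of the block $(\omega_{i,j}(t))_{i,j}$ alone once we know $S_N(t,\cdot)$ depends only on that block's law; more carefully, I would note $\|S_N(t,\cdot)\|_\alpha = \|Z_N(t)\|_\alpha/\|Z_N(t-1)\|_\alpha$ so $X_{N,\alpha}(t)=\Psi_\alpha(S_N(t,\cdot))$, and since the $S_N(t,\cdot)$ are i.i.d.\ across $t$ (same argument, using independence of the blocks $\fxi(s)$) and each has the stated fixed law independent of the past, $(X_{N,\alpha}(t))_{t\ge1}$ is i.i.d.\ regardless of $X_{N,\alpha}(0)$. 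For point (3), $\Upsilon_N(t) := \phi_N(t)-\phi_N(t-1) = \log\|S_N(t,\cdot)\|_\alpha$, which by point (1) is i.i.d.\ across $t$ with the law in \eqref{eq:accr-stable}; so $\phi_N$ is a random walk. Point (4) is then immediate from Theorem \ref{thm:gene-intro}: $v_N = \lim t^{-1}\phi_N(t) = \E[\Upsilon_N]$ and $\sigma_N^2 = \var[\Upsilon_N]$ by the strong law and CLT for the i.i.d.\ sum $\phi_N(t) = \phi_N(0) + \sum_{s=1}^t \Upsilon_N(s)$ (noting $\log Z_N(t,j)-\phi_N(t)$ is tight by \eqref{eq:indepsum} and point (1)).

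For point (5), apply point (2) with the stationary initial law: by Theorem \ref{thm:structure}(3) the chain started from $m_{N,\alpha}$ is stationary, and by point (2) its time-$1$ marginal is $\Psi_\alpha(S_N)$ with $S_N$ having i.i.d.\ $\Sa$ coordinates; stationarity forces $m_{N,\alpha} = \mathrm{law}(S_N/\|S_N\|_\alpha)$. Point (6) combines this with Theorem \ref{thm:structure}(4): $\VNAI(t) = \lim_{u\to\infty}\Pi(t,u)\cda v$, and running the forward recursion one step from time $t-1$ to $t$ inside this limit gives $\VNAI(t-1) = \fxi(t)\cda \VNAI(t) = \Psi_\alpha(\fxi(t)^*\,\cdot\,)$-type relation; more directly, $\VNAI(t)$ depends only on $\{\fxi(s):s> t\}$ hmm --- rather, from \eqref{eq:Xinfshift} and the one-step relation \eqref{eq:Xinfinvariant} shifted, $\VNAI(t-1)$ is obtained from $\VNAI(t)$ by the same $\Psi_\alpha(\fxi(\cdot)^*\,\cdot\,)$ map that defines the $X$-chain, so the argument of point (1)--(2) applies verbatim: conditionally on $\VNAI(t)$, the vector $\fxi(t)^*\VNAI(t)$ has i.i.d.\ $\Sa$ coordinates with law free of $\VNAI(t)$, whence $\{\VNAI(t,\cdot)\}_t$ is i.i.d.\ with marginal $m_{N,\alpha}$ by point (5). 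The only place demanding care is justifying that the conditional-law-is-constant step legitimately upgrades to full independence across all of $t$ simultaneously --- this is where I would be most careful, spelling out that the blocks $\{\fxi(t)\}_{t\in\Z}$ are independent and that $S_N(t,\cdot)$ (resp.\ $\VNAI(t,\cdot)$'s generating data) is measurable with respect to disjoint blocks, so a routine product-measure argument closes it; the potential subtlety with $\VNAI$ is that it depends on infinitely many blocks, but only on those with index $> t$, so the blocks used for distinct $t$ are nested rather than disjoint --- here one instead uses the one-step recursion to see that $\VNAI(t-1)$ given $\VNAI(t)$ uses only block $\fxi(t)$, which is independent of everything generating $\{\VNAI(s):s\ge t\}$, yielding the i.i.d.\ claim by backward induction.
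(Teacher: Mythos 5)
Your proposal is correct and follows essentially the same route as the paper: the key step is the conditional stability computation for $S_N(t,\cdot)$ given the past (the paper carries it out via Laplace transforms, you invoke \eqref{eq:stabilite2} together with independence of the columns of $\fxi(t)$, which is the same fact), and the remaining parts are the same bookkeeping, with your stationarity-plus-uniqueness derivation of part (5) and your backward-induction argument for part (6) merely making explicit what the paper states tersely. One cosmetic slip: the shifted covariant relation is $\VNAI(t-1)=\fxi(t)\cda\VNAI(t)$ (no transpose), but since the entries of $\fxi(t)$ are i.i.d.\ and independent of $\sigma(\fxi(s):s\geq t+1)$, your conditioning argument goes through verbatim with the correct relation.
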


This is proved in Section \ref{sec:RWrepsentation}.

Note that, from the product of random matrices point of view, our results look very close to \cite{CN84} in spirit, although only the symmetric stable case is treated there. 
 However, an inspection of their proofs  shows that they do not cover the case of positive totally asymmetric stable laws studied here.

As the velocity and variance from Theorem \ref{thm:gene-intro} are now explicit, we can try to obtain their asymptotics when $N$ grows. 
\begin{prop}\label{thm:main-stable} 
Assume $\{\omega_{i,j}(t):\, t\geq 1,\, 1\leq i,j \leq N\}$ is an i.i.d. family of $\Sa$-distributed random variables.
%First, identify $v_N$ and $\sigma^2_N$ in this case.
Let 
\begin{equation} \label{eq:calpha}
c_{\alpha} := \Gamma(\alpha) \frac{\sin{\pi \alpha}}{\pi \alpha} .
\end{equation}
 Then, as $N \to \8$, 
\begin{eqnarray} \label{eq:val-v_N}
v_N &=&\alpha^{-1} \big( \log N + \log \log N + \log c_{\alpha}\big)+ o(1), \\ \nn
\sigma^2_N &=& \frac{\pi^2}{3 \alpha^2  \log N} + o(\frac{1}{\log N}). 
\end{eqnarray}
\end{prop}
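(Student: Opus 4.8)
The plan is to reduce everything to the asymptotics of a single scalar random variable $\Upsilon_N \eqlaw \log \|S_N\|_\alpha$, where $S_N = (S_1,\ldots,S_N)$ has i.i.d.\ $\Sa$ coordinates, since by Theorem~\ref{thm:structure-stable}(4) we have $v_N = \E[\Upsilon_N]$ and $\sigma_N^2 = \var[\Upsilon_N]$. The key is the identity $\|S_N\|_\alpha^\alpha = \sum_{i=1}^N S_i^\alpha$, so that
\begin{equation*}
\alpha \Upsilon_N \eqlaw \log \Big( \sum_{i=1}^N S_i^\alpha \Big).
\end{equation*}
Now $S_i^\alpha$ is a positive random variable whose right tail, by the stated tail of $\Sa$, satisfies $\P[S_i^\alpha > x] = \P[S_i > x^{1/\alpha}] \sim \frac{1}{\Gamma(1-\alpha)} x^{-1}$ as $x\to\infty$. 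That is, $S_i^\alpha$ is in the domain of attraction of a $1$-stable (Cauchy-type) law but just barely fails to have a finite mean — it has a slowly varying ``effective mean'' of order $\log$. The whole computation is therefore the classical problem of the growth of a sum of $N$ i.i.d.\ nonnegative variables with tail $\sim C/x$: the sum is of order $N\log N$, and $\log$ of it is $\log N + \log\log N + \text{const} + o(1)$.

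First I would make the tail statement precise: using $\E e^{-\lambda S} = e^{-\lambda^\alpha}$ one computes $\E[e^{-\mu S^\alpha}]$ or directly the Mellin/Laplace transform of $S^\alpha$; in particular $\E[S_i^\alpha \wedge t]$ or the truncated mean $\int_0^t \P[S^\alpha > x]\,dx \sim \log t$ as $t\to\infty$, with the constant $c_\alpha$ of \eqref{eq:calpha} emerging from the exact (not just asymptotic) transform of $S^\alpha$. Concretely, one has the known formula $\E[S^{-s}] $ or $\E[e^{-u S^\alpha}]$ in terms of Gamma functions — I expect $\E[S^\alpha \wedge x] = \log x + \log c_\alpha + o(1)$ is what pins down $c_\alpha$. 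Then, for $v_N$: write $T_N := \sum_{i=1}^N S_i^\alpha$ and use a truncation at level $t_N \asymp N\log N$. The contribution of the bulk (truncated variables) gives $\E[\text{truncated } T_N] \sim N(\log(N\log N) + \log c_\alpha) = N(\log N + \log\log N + \log c_\alpha)(1+o(1))$, while the number of exceedances above $t_N$ is $o(1)$ on average and $\log T_N$ concentrates. A clean way to package this is a weak law of large numbers with slowly varying truncation (as in Feller, or the ``stable-like'' LLN): $T_N / (N\log N) \to 1$ in probability, and in fact $\E[\log T_N] = \log(N\log N) + \log c_\alpha + o(1)$. Dividing by $\alpha$ yields \eqref{eq:val-v_N} for $v_N$, once one upgrades convergence in probability of $\log T_N - \log(N\log N)$ to convergence of expectations via a uniform integrability bound — using $\log T_N \geq \alpha \log \max_i S_i$ together with moment bounds on $\log S_i$ and a crude upper bound $\log T_N \leq \log N + \alpha \log\max_i S_i^{+}$ to control both tails.

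For $\sigma_N^2 = \var[\log T_N]$, the heuristic is that the fluctuations of $T_N$ are dominated by the largest few terms: $T_N \approx c_1 \cdot (\text{maximum-type contribution})$, and for a sum with tail $C/x$ the normalized sum $T_N/(N\log N)$ converges to $1$ but the next-order fluctuation is of order $1/\log N$ in the relative scale, governed by a centered $1$-stable law truncated logarithmically. More precisely I would show $\frac{\log N}{N}\big(T_N - N(\log N+\log\log N+\log c_\alpha)\big)$ converges in law to a (spectrally positive) Cauchy-type variable, so that $\log T_N - \log(N\log N) \approx \frac{1}{\log N}\cdot(\text{that limit})$, and the variance of $\log$ is $\frac{1}{(\log N)^2} \var(\text{limit})$; matching with the claimed $\pi^2/(3\alpha^2 \log N)$ forces a careful second-order expansion — the factor $\pi^2/3 = \var$ of a logistic or the value $\zeta(2)\cdot 2$, suggesting the relevant limit object has a variance computable via $\int_0^\infty (\log(1+x)\text{-type})$ integrals. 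The main obstacle, and where I would spend the most care, is precisely this second-order analysis: getting the $1/\log N$ asymptotics of the variance (not just of the mean) requires controlling the interplay between the log-correction in the centering and the heavy-tailed fluctuations, i.e.\ a local/second-order limit theorem for $T_N$ rather than the crude LLN, plus uniform integrability of $(\log T_N - \E\log T_N)^2$ — both squeezed against the slowly varying factor $\log N$. The expectation asymptotics is routine truncation; the variance asymptotics is the real work.
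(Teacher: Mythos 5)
Your proposal follows the same overall route as the paper: reduce to $\alpha\Upsilon_N = \log\sum_i S_i^\alpha$, note the tail $\P[S_i^\alpha>x]\sim 1/(\Gamma(1-\alpha)x)$ places the summand in the domain of attraction of a totally asymmetric $1$-stable law, obtain $\Sigma_N/(N\log N) \to 1/\Gamma(1-\alpha)$ in probability, and promote to convergence of expectations via uniform integrability (the paper uses bounds on $\E(\Sigma_N/N\log N)^a$ for some $a>0$ and $a=-1$). That part is on target.

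However, your sketch of the variance step has a genuine gap, and it is not merely a matter of ``careful second-order expansion.'' You propose to linearize $\log(1+\S_N/\log N)\approx \S_N/\log N$ and conclude $\sigma_N^2\approx (\log N)^{-2}\var(\text{limit})$, but the limit $\S$ is a $1$-stable law with \emph{infinite variance}, so this scheme produces $\infty$ rather than the finite answer. The nonlinearity of the logarithm is essential, not a refinement. The paper's argument writes
\begin{equation*}
\log\!\left(\frac{\Sigma_N}{c_\alpha N\log N}\right) = \log\!\left(1+\frac{\S_N}{\log N}\right)=F_N(\S_N),\qquad F_N(x)=\log\!\left(1+\frac{x}{\log N}\right),
\end{equation*}
and then shows, using that $\log N \cdot F_N^2$ is uniformly bounded and uniformly continuous on $\{|x|\le L\sqrt{\log N}\}$ together with the L\'evy tail $\P(\S>y)\sim 1/y$, that
\begin{equation*}
\log N\cdot\E\!\left[F_N^2(\S_N)\,\mathbf 1_{\{|\S_N|\le L\sqrt{\log N}\}}\right]\longrightarrow\log N\int_0^\infty F_N^2(y)\,\frac{dy}{y^2}=\int_0^\infty \log^2(1+y)\,\frac{dy}{y^2}=\frac{\pi^2}{3}.
\end{equation*}
Here the concavity of $\log$ is exactly what tames the heavy tail: $\log^2(1+y)/y^2$ is integrable at $+\infty$ even though a naive quadratic in $y$ against $dy/y^2$ is not. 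This computation is what pins down the constant $\pi^2/3$, and nothing short of it will give a finite answer.

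Separately, your proposed second-order normalization $\frac{\log N}{N}\bigl(T_N - N(\log N+\log\log N+\log c_\alpha)\bigr)$ does not converge: the $\log\log N$ centering term, multiplied by $\log N/N\cdot N$, contributes $\log N\cdot\log\log N\to\infty$. The correct stable CLT is $\S_N=\frac{\Gamma(1-\alpha)}{N}\bigl(\Sigma_N-\frac{N\log N}{\Gamma(1-\alpha)}\bigr)\cvlaw\S$, with centering at $N\log N/\Gamma(1-\alpha)$ only; the $\log\log N$ and $\log c_\alpha$ terms enter $v_N$ through the expectation of $\log$, not through the centering constant of the stable CLT.
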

This is proved in Section \ref{sec:asymptLyap}.

%%%%%%
We now state the convergence of the rescaled random walk or polymer height to a L\'evy process:
\begin{teo}\label{thm:main-front}
Assume $\{\omega_{i,j}(t):\, t\geq 1,\, 1\leq i,j \leq N\}$ is an i.i.d. family of $\Sa$-distributed random variables.
Then, for any sequence $k_N\to \infty$, we have that 
\begin{eqnarray} \label{eq:ouf}
	\frac{\phi_N({k_N\tau})-\gamma_N k_N \tau}{k_N/\log N^{\alpha}} &\to& \S(\tau),
	% \frac{\phi_{k_N \tau}-k_N\tau \log \beta_N}{k_N/\log N^{\alpha}}-\tau \log k_N &\to& \S(\tau)
\end{eqnarray}
in law in the Skorohod topology, 
where $\phi_N$ is defined in \eqref{def:Phi_N} and 
\begin{eqnarray*}
	%\beta_N &=& \left( \frac{N \log N}{\Gamma(1-\alpha)}\right)^{1/\alpha}\\
	\gamma_N &=&  \frac{1}{\alpha}\log \left( \frac{N \log N}{\Gamma(1-\alpha)}\right)+\frac{\log k_N}{\alpha \log N} 
\end{eqnarray*}
and
 $\S(\cdot)$ is a totally asymmetric L\'evy process with exponent
\begin{eqnarray}
	\psi(u) = \int^{\infty}_1(e^{iux}-1)\frac{dx}{x^2}+\int^1_0(e^{iux}-1-iux)\frac{dx}{x^2}.
\end{eqnarray}
\end{teo}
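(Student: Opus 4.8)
\textbf{Proof proposal for Theorem \ref{thm:main-front}.}

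The plan is to leverage Theorem \ref{thm:structure-stable}(3), which tells us that $\phi_N(t)$ is a random walk with i.i.d. increments $\Upsilon_N(t) \eqlaw \log \|S_N\|_\alpha$, where $S_N$ has i.i.d. $\Sa$-distributed coordinates. So the statement reduces to a classical functional limit theorem: identify the centering $\gamma_N k_N\tau$ and scaling $k_N/(\log N)^\alpha$ under which the random walk $\sum_{s=1}^{\lfloor k_N\tau\rfloor}\Upsilon_N(s)$ converges to the L\'evy process $\S(\cdot)$. Since $k_N\to\infty$, for each fixed $\tau$ the variable $\phi_N(k_N\tau)$ is a sum of $\sim k_N\tau$ i.i.d. terms; the key point is that the \emph{law of the summands themselves depends on $N$} (through the dimension), so this is a triangular-array limit theorem and one should invoke the standard criterion (e.g. as in Gnedenko--Kolmogorov, or Kallenberg/Jacod--Shiryaev for the Skorohod statement): convergence of the row-sum characteristic functions to $\exp(\tau\psi(u))$, together with the usual uniform-negligibility (infinitesimality) of the array, yields convergence of finite-dimensional distributions, and then tightness upgrades this to Skorohod convergence because the limit is a L\'evy process with independent increments.

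The analytic heart is therefore the asymptotic analysis, as $N\to\infty$, of the distribution of a single increment $\Upsilon_N = \log\|S_N\|_\alpha = \frac1\alpha\log\big(\sum_{i=1}^N S_i^\alpha\big)$ where $S_1,\dots,S_N$ are i.i.d. $\Sa$. First I would study the tail of $S^\alpha$: since $\P[S>x]\sim \Gamma(1-\alpha)^{-1}x^{-\alpha}$, we get $\P[S^\alpha > y]\sim \Gamma(1-\alpha)^{-1} y^{-1}$, so $S^\alpha$ is in the domain of attraction of a $1$-stable (Cauchy-type) law with the very slowly varying normalization. Consequently $\sum_{i=1}^N S_i^\alpha$ is of order $N\log N /\Gamma(1-\alpha)$ (the $\log N$ is the correction for index exactly $1$), which after dividing by $N$ and taking logarithms and $\frac1\alpha$ produces precisely the leading term $\frac1\alpha\log\big(N\log N/\Gamma(1-\alpha)\big)$ appearing in $\gamma_N$; the extra piece $\frac{\log k_N}{\alpha\log N}$ in $\gamma_N$ is the correction needed so that the \emph{sum} of $k_N$ such increments is correctly recentered (it comes from the fact that one is summing $k_N$ copies, shifting the effective normalization by $\log k_N$ inside the slowly varying factor). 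Writing $\Upsilon_N = \frac1\alpha\log\sum_i S_i^\alpha$ and expanding the logarithm around the typical size, one finds $\log N^\alpha \cdot \big(\Upsilon_N - \text{const}_N\big)$ has, as $N\to\infty$, a characteristic function converging to $\exp(\psi(u))$ — here the splitting of $\psi$ into an integral over $[1,\infty)$ (no compensation) and over $(0,1)$ (with the $-iux$ compensator) reflects exactly the Khintchine form for a totally asymmetric $1$-stable-type L\'evy measure $dx/x^2$ on $(0,\infty)$, which is the image of the Pareto-$1$ tail of $S^\alpha$ under $y\mapsto \frac{1}{\alpha\log N^\alpha}\log y$ in the limit. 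I would carry this out via the L\'evy--Khintchine representation: compute $\E\exp\big(iu\log N^\alpha(\Upsilon_N-c_N)\big)$, use independence to reduce to the $N$-th power of a single-coordinate transform, and show $N\big(\E[\cdots]-1\big)\to\psi(u)$ by a direct Laplace/tail computation using the known density or Laplace transform of $\Sa$.

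Once the one-dimensional (indeed, one-increment, hence by independence all finite-dimensional) convergence of characteristic functions to $\exp(\tau\psi(u))$ is established, the remaining steps are routine: infinitesimality of the triangular array (each individual recentered, rescaled increment tends to $0$ in probability, which follows from the same estimates), then invoke the general functional limit theorem for row-sums of i.i.d. triangular arrays converging to a L\'evy process (e.g. Jacod--Shiryaev, Theorem VII.3.4, or Kallenberg) to get convergence in the Skorohod $J_1$ topology of $\tau\mapsto \frac{\phi_N(k_N\tau)-\gamma_N k_N\tau}{k_N/\log N^\alpha}$ to $\S(\cdot)$. The main obstacle I anticipate is purely in the second paragraph: getting the asymptotics of the characteristic function of $\Upsilon_N$ sharp enough — in particular correctly capturing \emph{both} the $\log N$ and the $\log k_N$ corrections inside $\gamma_N$, and verifying that the compensation term in $\psi$ emerges with the right constant — since index exactly $1$ is the delicate borderline case where slowly varying corrections are unavoidable and truncation/centering must be handled with care. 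The bookkeeping overlaps substantially with the computation already needed for Proposition \ref{thm:main-stable} (the mean and variance of $\Upsilon_N$), so those estimates can be reused.
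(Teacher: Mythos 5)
Your proposal follows essentially the same route as the paper's proof: recognize $\phi_N$ as a random walk with i.i.d.\ increments $\Upsilon_N$ whose $N$-dependent law forms a triangular array, reuse the one-increment convergence $\alpha\log N\bigl(\Upsilon_N-\log\beta_N\bigr)\to\S$ already established for Proposition~\ref{thm:main-stable} (via the domain-of-attraction computation for $S^\alpha$ with the index-$1$ slowly varying $\log N$ correction), and invoke a general functional limit theorem for row-sums of triangular arrays of i.i.d.\ variables (the paper uses Theorem~3.2 of \cite{JacodSF}, you cite Jacod--Shiryaev/Kallenberg) to upgrade to Skorohod convergence, with the extra $\log k_N/(\alpha\log N)$ term in $\gamma_N$ absorbing the slowly varying renormalization shift when summing $k_N$ increments. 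The only loose point in your sketch is the phrase ``reduce to the $N$-th power of a single-coordinate transform'' for the characteristic function of $\Upsilon_N$ --- since $\Upsilon_N$ is the logarithm of a sum rather than a sum, the factorization only holds after linearizing $\log(1+x)\approx x$ around the typical size, which is exactly what the expansion $\Upsilon_N=\alpha^{-1}\log(N\log N/\Gamma(1-\alpha))+\S_N/(\alpha\log N)+o(1/\log N)$ in the paper accomplishes.
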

The proof is very close to the one of the corresponding statement in \cite{CQR} and is given  in Section \ref{sec:poly-height}.

Finally, we obtain a Poisson-type convergence result for the invariant measure $m_{N,\alpha}$ in the case of an $\Sa$-distributed environment. This is presented in Section \ref{sec:inv-measure}.

%%%%%%%%%%%%%%%%%%%%%%%%%%%%%%%%%%%
%%%%%%%%%%%%%%%%%%%%%%%%%%%%%%%%%%%
%%%%%%%%%%%%%%%%%%%%%%%%%%%%%%%%%%%
%%%%%%%%%%%%%%%%%%%%%%%%%%%%%%%%%%%

\subsection{Perturbative results}
We will now study the case of environments that are perturbations of the $\Sa$ laws. 

Let $\alpha\in(0,1)$ and suppose $\{\omega_{ij}(t):\, t\geq 1,\, 1\leq i,j \leq N\}$ is an i.i.d. family of random variables with a common Laplace transform 
\begin{equation} \nn
\varphi(u)= \E\big[ \exp \big\{-u \; \omega_{i,j} (t)\big\}\big] ,\qquad  u \geq 0,
\end{equation}
 such that
\begin{equation} \label{eq:closetoSa}
	1 - \varphi(u) \sim u^{\alpha},\quad % t\sim 0,
 u \to 0^+.
\end{equation}
We view such an environment as a perturbation of the $\alpha$-stable distributed environment, since $\omega$ lies in the domain of attraction 
of the $\alpha$-stable law.
It is important to note that $\E \omega_{i,j}(t)=\8$, therefore the 
various partition functions are not integrable and cannot be normalized. In particular, the models we consider are
outside the range of application of the main techniques in the field of directed polymers \cite{SF}.
Let  $u_{\alpha}$ denote the distribution function of the logarithm of an $\Sa$ random variable
\begin{equation} \label{def:ualpha}
u_{\alpha} (x) = \P ( \Sa > e^x),\qquad x \in \R, 
\end{equation}
 and let 
$U_N$ denote the {\em front profile} of the polymer,	
\begin{equation} \label{def:front}
U_N(t,x):=\frac{1}{N}\sum^N_{j=1}{\bf 1}_{\log Z_N(t,j)>x} \;, \qquad t \in \N, x \in \R.
\end{equation}
The random function $x \mapsto U_N(t,x)$ is the (inverse) distribution function of the polymer height function. 
\begin{teo} \label{th:perturb}
	Suppose the $\omega$'s satisfy \eqref{eq:closetoSa} for a given $\alpha \in (0,1)$.
	\begin{enumerate}
\item 
	Then, for all $t\geq 2$ and any  $i\in \{1,\cdots,N\}$, we have
	\begin{eqnarray*}
		\frac{Z_N(t,i)}{||Z_N(t-1)||_{\alpha}} \cvlaw \Sa.
	\end{eqnarray*}
	Moreover, for each $k\geq 1$ and any sequence $K_N\subset \{1,\cdots,N\}$ with $|K_N|=k$, we have
	\begin{eqnarray*}
		\left\{\frac{Z_N(t,i)}{||Z_N(t-1)||_{\alpha}}:\, i \in K_N\right\} \cvlaw \Sa^{\otimes k}.
	\end{eqnarray*}
\item
	Furthermore,  for all $t\geq 2$, we have, with $\phi_N$ from \eqref{def:Phi_N} and $u_\alpha$ from \eqref{def:ualpha}
			$$U_N\big(t, x + \phi_N(t-1)\big) \to u_{\alpha}(x) \quad a.s.,$$
			as $N\to \infty$, uniformly in $x$.
			\end{enumerate}
\end{teo}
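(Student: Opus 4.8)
The plan is to exploit the conditional independence of the column sums. Write $\mathcal F_{s}=\sigma(\omega_{i,j}(r):\,r\le s)$ and set $a_k:=X_{N,\alpha}(t-1,k)=Z_N(t-1,k)/\|Z_N(t-1)\|_\alpha$, so $\sum_{k=1}^N a_k^\alpha=1$. By \eqref{eq:recursion-general}, $Z_N(t,j)=\sum_{k=1}^N Z_N(t-1,k)\,\omega_{k,j}(t)$, and since distinct columns of $\fxi(t)$ are independent, the variables $S_N(t,j)$, $j=1,\dots,N$, are, conditionally on $\mathcal F_{t-1}$, i.i.d.\ with conditional Laplace transform $\E[e^{-s S_N(t,j)}\mid\mathcal F_{t-1}]=\prod_{k=1}^N \varphi(s a_k)$. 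The heart of part (1) is the \emph{spreading estimate} $\max_{k\le N}a_k\to0$ in probability (valid for $t\ge2$, i.e.\ after at least one step of the dynamics). Granting it, expand $\log\prod_k\varphi(s a_k)=\sum_k\log\!\big(1-(1-\varphi(s a_k))\big)$ and use \eqref{eq:closetoSa} in the form $1-\varphi(u)=u^\alpha(1+r(u))$ with $r(u)\to0$ as $u\to0^+$, together with $\log(1-x)=-x+O(x^2)$: on $\{\max_k a_k\le\epsilon\}$,
\[
\log\prod_{k=1}^N\varphi(s a_k)=-\,s^\alpha\Big(1+\sum_{k}a_k^\alpha r(s a_k)\Big)+O\Big(s^{2\alpha}\sum_k a_k^{2\alpha}\Big),
\]
and since $\sum_k a_k^{2\alpha}\le\max_k a_k^\alpha$ and $|\sum_k a_k^\alpha r(sa_k)|\le\sup_{u\le s\epsilon}|r(u)|$, the right side tends to $-s^\alpha$. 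Hence the conditional (and so the unconditional) law of $S_N(t,j)$ converges to $\Sa$, and by conditional independence the same computation applied to the joint Laplace transform of any $k$ fixed coordinates gives $\Sa^{\otimes k}$. This is part (1).

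To prove the spreading estimate, condition on $\mathcal F_{t-2}$: then $Z_N(t-1,k)=\sum_\ell Z_N(t-2,\ell)\,\omega_{\ell,k}(t-1)$, $k=1,\dots,N$, are i.i.d., each a positive linear combination of i.i.d.\ variables with tail $\P(\omega>x)\sim x^{-\alpha}/\Gamma(1-\alpha)$; by the one-big-jump principle $\P(Z_N(t-1,k)>x\mid\mathcal F_{t-2})\sim \|Z_N(t-2)\|_\alpha^\alpha\,x^{-\alpha}/\Gamma(1-\alpha)$, so $W_k:=Z_N(t-1,k)^\alpha$ has a tail that is regularly varying of index $1$, with the universal constant $1/\Gamma(1-\alpha)$ after dividing by the common scale $\|Z_N(t-2)\|_\alpha^\alpha$. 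Since $\max_k a_k^\alpha=\max_k W_k/\sum_k W_k$ is scale-free, it has the law of $\max_k\widetilde W_k/\sum_k\widetilde W_k$ for i.i.d.\ index-$1$ variables $\widetilde W_k$; for this borderline index the sum of $N$ such variables is of order $N\log N$ while the maximum is of order $N$, whence the ratio tends to $0$ in probability. (For $t=2$ the $W_k$ are i.i.d.\ outright for any deterministic initial condition, no conditioning needed.)

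For part (2), note $U_N(t,x+\phi_N(t-1))=\frac1N\sum_{j=1}^N\mathbf 1_{\{S_N(t,j)>e^x\}}$ is the empirical tail of the $S_N(t,j)$'s. Conditionally on $\mathcal F_{t-1}$ this is an average of $N$ i.i.d.\ Bernoulli$(p_N(x))$ with $p_N(x):=\P(S_N(t,j)>e^x\mid\mathcal F_{t-1})$, so Hoeffding gives $\P(|U_N(t,x+\phi_N(t-1))-p_N(x)|>\eta)\le2e^{-2N\eta^2}$; summing in $N$ and applying Borel--Cantelli yields $U_N(t,x+\phi_N(t-1))-p_N(x)\to0$ a.s.\ for each fixed $x$. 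Part (1) provides $p_N(x)\to u_\alpha(x)$; since $\Sa$ has a density, $u_\alpha$ is continuous and strictly decreasing, so pointwise a.s.\ convergence along a countable dense set of $x$'s together with the monotonicity in $x$ of both $U_N(t,\cdot+\phi_N(t-1))$ and $u_\alpha$ upgrades to uniform a.s.\ convergence (P\'olya's theorem).

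The main obstacle is upgrading ``$p_N(x)\to u_\alpha(x)$'' from convergence in probability to the \emph{almost sure} statement needed in part (2): this requires the spreading estimate $\max_k X_{N,\alpha}(t-1,k)\to0$ to hold almost surely (or with summable exceptional probabilities), i.e.\ that no single coordinate of $Z_N(t-1)$ ever carries a non-vanishing fraction of the $\alpha$-mass. The difficulty is intrinsic: the coordinates have an index-$1$, barely non-integrable tail, for which the maximum and the sum of $N$ i.i.d.\ copies have comparable upward fluctuations, so a crude Borel--Cantelli bound on $\max_k a_k$ is not summable. Overcoming this needs a finer truncation/Borel--Cantelli analysis of the weight vector $X_{N,\alpha}(t-1)$ along $N$ — or, alternatively, exploiting that in the exactly solvable $\Sa$-environment of Theorem~\ref{thm:structure-stable} the conditional law of $S_N(t,j)$ is \emph{exactly} $\Sa$ for every weight vector, and controlling how close the perturbed model stays to it.
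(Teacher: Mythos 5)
Your argument for part (1) takes the same route as the paper: condition on $\mathcal F_{t-1}$, express the conditional Laplace transform as $\prod_k \varphi(s a_k)$ with $a_k=X_{N,\alpha}(t-1,k)$, show $\max_k a_k\to0$ in probability (the paper's Lemma~\ref{lem:aNismall}), and pass to the limit. The paper's version of the algebra is slightly cleaner: it rewrites \eqref{eq:closetoSa} as $\varphi(u)=\exp\{-u^\alpha(1+\eps(u))\}$, so that $\prod_k\varphi(s a_k)=\exp\{-s^\alpha\big(1+\sum_k a_k^\alpha\eps(s a_k)\big)\}$ holds \emph{exactly}, with $\big|\sum_k a_k^\alpha\eps(s a_k)\big|\le\sup_{0<v\le s\max_k a_k}|\eps(v)|$, thereby avoiding the second-order $\log(1-x)$ remainder that you bound by $\sum_k a_k^{2\alpha}\le\max_k a_k^\alpha$. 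Both are fine. Your one-big-jump derivation of the spreading estimate is, modulo packaging, what the paper does via the Tauberian theorem followed by extreme-value and point-process convergence for the index-one array $Z_N(t-1,k)^\alpha$.

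The concern you raise about part (2) is genuine and in fact points to a soft spot in the paper itself. Lemma~\ref{lem:aNismall} (and your version of it) only gives $\max_i a_{N,i}\to 0$ \emph{in probability}, so $p_N(x)=\P\big(S_N(t,j)>e^x\mid\mathcal F_{t-1}\big)\to u_\alpha(x)$ is only established in probability. Your Hoeffding/Borel--Cantelli step, like the paper's appeal to a triangular-array LLN, controls the Bernoulli fluctuation $U_N-p_N\to0$ almost surely, but neither argument upgrades the drift $p_N(x)\to u_\alpha(x)$ to an almost-sure statement. As written, the honest conclusion of both proofs is $U_N\big(t,x+\phi_N(t-1)\big)\to u_\alpha(x)$ uniformly in $x$, \emph{in probability}; the a.s.\ claim would require an almost-sure (or Borel--Cantelli--summable) spreading estimate, which is not available from a bound of order $1/\log N$ on $\P(\max_i a_{N,i}>\delta)$. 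You were right to flag this rather than assert the a.s.\ convergence without justification.
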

%%%%%
{\bf Comment}: Since the polymer is pulled by the largest values, the height function is expected to grow like a moving front. Roughly,  the front behaves like a wave  traveling at speed $v_N$, 
and one could try to look at it at time $t$ around location $v_Nt$. But if the cardinality $N$ of the monomer state space is large, the actual 
front can be rather different from $v_Nt$, so it is natural  to look at it relative to a suitable random location $\phi_N(t-1)$. Item (2) in Theorem     
\ref{th:perturb} shows that, for $\omega$ close to an $\alpha$-stable law (i.e., an exactly solvable model), the polymer height function seen from this  location $\phi_N(t-1)$ converges -- without scaling -- to the exactly solvable model. 

\medskip

Theorem \ref{th:perturb} is proven in Section \ref{sec:proof-perturb}.
In Section \ref{sec:frontSa} we will also prove more complete results in the case of stable environments, including
a fluctuation theorem; see Proposition \ref{prop:frontprofilestable}.

%%%%%%%%%%%%%%%%%%%%%%%%%%%%%%%%%%%
%%%%%%%%%%%%%%%%%%%%%%%%%%%%%%%%%%%
%%%%%%%%%%%%%%%%%%%%%%%%%%%%%%%%%%%
%%%%%%%%%%%%%%%%%%%%%%%%%%%%%%%%%%%

%%%%%%%%%%%%%%%%%%%%%%%%%%%%%%%%%%%%
%%%%%%%%%%%%%%%%%%%%%%%%%%%%%%%%%%%%
%%%%%%%%%%%%%%%%%%%%%%%%%%%%%%%%%%%%
%%%%%%%%%%%%%%%%%%%%%%%%%%%%%%%%%%%%

\section{General environments with fixed $N$}
We note once for all that $\Psi_{\alpha}$ is a continuous bijection from $\bar{B}\equiv \bar{B}_1$ to $\bar{B}_{\alpha}$ for any $\alpha>0$. Hence, it is enough to work with $\alpha=1$. The objects constructed on $\bar{B}_1$ can then be projected on the other $\alpha$-symplexes yielding the results for general values of $\alpha$. In particular, the relation $\VNAI= \Psi_\alpha( \VNBI)$ appearing in part 2 of Theorem \ref{thm:structure},
follows immediately.
Hence, we will restrict to $\alpha=1$ for the rest of this section.

 We follow the well-known theory of product of random matrices, see e.g. \cite{Hennion97, HennionHerve08}. 
 %%%%%%%%%%%%%%%%%%%%%%%%%%%%%%%%%%%%
%%%%%%%%%%%%%%%%%%%%%%%%%%%%%%%%%%%%
%%%%%%%%%%%%%%%%%%%%%%%%%%%%%%%%%%%%
%%%%%%%%%%%%%%%%%%%%%%%%%%%%%%%%%%%%

\subsection{Asymptotics of the Markov chain and stochastic contractivity} \label{sec:2.1}

In this section we prove Theorem \ref{thm:structure}, using  the following lemma, cf. section I in \cite{Hennion97}, which is the involved step. We reproduce the proof in the appendix as it contains some crucial ideas, in particular a contraction property. (And, moreover, it is beautiful.)

\begin{lem} \label{th:cvdir}
There exists a r.v. $\VNI$ taking values in $B$ such that, for all $v \in \bar B$,  $\Pi(t) \cdot v$ converges a.s. to 
$\VNI$ as $t \to \8$.  The convergence is a.s. uniform in $v \in \bar B$. 
\end{lem}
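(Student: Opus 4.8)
The plan is to establish the almost-sure uniform convergence of $\Pi(t)\cdot v$ by exhibiting a contraction property with respect to a suitable projective metric, combined with a Borel--Cantelli argument. The natural tool here is Hilbert's projective metric (or Birkhoff's contraction coefficient) on the cone $\R^N_+$: for $u,v\in\R^N_+\setminus\{0\}$ set
\begin{equation*}
d(u,v)=\log\frac{\max_{i}(u_i/v_i)}{\min_{i}(u_i/v_i)}.
\end{equation*}
This descends to a genuine metric on the projective symplex $\bar B$, and it is the right metric because a matrix $\fxi$ with strictly positive entries acts as a strict contraction for $d$, with contraction coefficient
\begin{equation*}
\tau(\fxi)=\tanh\Big(\tfrac14\,\mathrm{diam}_d(\fxi\cdot \bar B)\Big)<1,
\end{equation*}
where $\mathrm{diam}_d$ denotes the $d$-diameter of the image. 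The key classical fact (Birkhoff) is $d(\fxi\cdot u,\fxi\cdot v)\le \tau(\fxi)\,d(u,v)$.

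First I would verify that $\bar B$ equipped with $d$ is complete on the relevant subset, and that convergence in $d$ on a set bounded away from the boundary $\partial\R^N_+$ implies convergence in the usual Euclidean sense; since our matrices $\fxi(t)$ have a.s. strictly positive entries, after one step everything lands in the interior, so this is not an issue. Next, the core estimate: because $\fxi(1)$ alone already maps $\bar B$ into a $d$-bounded subset, the diameter $\mathrm{diam}_d(\Pi(1)\cdot\bar B)$ is a.s. finite, hence $\tau(\Pi(1))<1$ almost surely. More usefully, I would show that $\E[\log \tau(\Pi(1,1{+}n))]<0$ for some fixed $n$ — or even just that $\P(\tau(\fxi(1))\le 1-\delta)>0$ for some $\delta>0$ — so that along the product $\Pi(t)=\fxi(1)\cdots\fxi(t)$ the composed contraction coefficients $\prod_{s}\tau(\fxi(s))$ tend to $0$ almost surely (e.g. by the strong law of large numbers applied to $\sum\log\tau(\fxi(s))$, or by Borel--Cantelli on the i.i.d. events $\{\tau(\fxi(s))\le 1-\delta\}$ occurring infinitely often and using $\tau\le 1$ always).

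With that in hand the convergence is a Cauchy argument: for $s<t$,
\begin{equation*}
\sup_{v\in\bar B} d\big(\Pi(t)\cdot v,\ \Pi(s)\cdot v\big)
\le \Big(\prod_{r=1}^{s}\tau(\fxi(r))\Big)\cdot \mathrm{diam}_d\big(\Pi(s,t)\cdot\bar B\big)
\le \Big(\prod_{r=1}^{s}\tau(\fxi(r))\Big)\cdot \mathrm{diam}_d\big(\fxi(s{+}1)\cdot\bar B\big),
\end{equation*}
and the last diameter, being $\le$ a fixed a.s.-finite quantity distributed like $\mathrm{diam}_d(\fxi(1)\cdot\bar B)$, does not blow up fast enough to defeat the geometric decay of $\prod_{r\le s}\tau(\fxi(r))$; a routine Borel--Cantelli/Kronecker argument shows $\sum_s (\prod_{r\le s}\tau(\fxi(r)))\,\mathrm{diam}_d(\fxi(s{+}1)\cdot\bar B)<\infty$ a.s., so $(\Pi(t)\cdot v)_t$ is a.s. $d$-Cauchy, uniformly in $v$, with a common limit $\VNI$ independent of $v$ (the bound is $v$-free). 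Finally I would pass from $d$-convergence back to $\bar B_1$ (Euclidean) convergence, which is automatic since $d$ dominates the relevant Euclidean distance on compact sets of the interior.

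The main obstacle I anticipate is not the contraction inequality itself — that is classical Birkhoff theory — but rather controlling the \emph{size} of the perturbing diameters $\mathrm{diam}_d(\fxi(s)\cdot\bar B)$, which are themselves unbounded random variables (stable or heavy-tailed environments make the entries of $\fxi$ have infinite mean), so one must check that $\log\mathrm{diam}_d(\fxi(1)\cdot\bar B)$ has enough integrability (a logarithmic moment suffices) for the Borel--Cantelli step, and that the geometric factor genuinely dominates. This is exactly the "involved step" the authors refer to, and it is why they reproduce the argument in the appendix; I would follow Hennion's Section I treatment, isolating the lemma that $\E[\log^+\mathrm{diam}_d(\fxi(1)\cdot\bar B)]<\infty$ (which follows from positivity of the entries and a crude bound of the diameter by $\log$ of a ratio of entries) and then running the Cauchy estimate above.
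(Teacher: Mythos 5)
Your plan is in the right family --- projective metric, Birkhoff-type contraction, shrinking diameters --- but it diverges from the paper's argument in a place that actually matters: the choice of metric. The paper (Appendix A, following Hennion) uses a \emph{bounded} projective distance $d(x,y)=\phi\big(m(x,y)m(y,x)\big)$ with $\phi(s)=(1-s)/(1+s)$, so $d\le 1$ on all of $\bar B\times\bar B$, and it defines the contraction coefficient $c(g)=\sup_{x,y\in\bar B}d(g\cdot x,g\cdot y)\in[0,1]$. Because $d\le 1$, the nested compacta $K(t)=\Pi(t)\cdot\bar B$ automatically satisfy $\mathrm{diam}_d\,K(t)\le c(\Pi(t))\le\prod_{s\le t}c(\fxi(s))\to 0$ a.s., and the lemma follows with \emph{no} moment assumptions on $\omega$ whatsoever. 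You instead work with the unbounded Hilbert metric, so the perturbing diameters $\mathrm{diam}_d\big(\fxi(s)\cdot\bar B\big)$ --- in your metric essentially $\log\big(\max_{i,j}\omega_{i,j}(s)/\min_{i,j}\omega_{i,j}(s)\big)$ --- are unbounded random variables that must be controlled against the geometric decay of $\prod_r\tau(\fxi(r))$.

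This is where the genuine gap sits. Your Borel--Cantelli/Kronecker step needs $\E\big[\log^+\mathrm{diam}_d(\fxi(1)\cdot\bar B)\big]<\infty$, and you assert this ``follows from positivity of the entries and a crude bound of the diameter by $\log$ of a ratio of entries.'' It does not: positivity gives a.s.\ finiteness of the diameter, not integrability of its $\log^+$. (If, say, $\P(\omega>x)\sim(\log\log x)^{-1}$, then $\E\log^+|\log\omega|=\infty$ and the required moment fails, yet the lemma is still true --- the paper proves it with no hypothesis beyond positivity.) There is a second, smaller defect: in the Hilbert metric the inequality $d(\Pi(t)\cdot v,\Pi(s)\cdot v)\le\tau(\Pi(s))\,d(\Pi(s,t)\cdot v,v)$ is vacuous when $v\in\bar B\setminus B$, since then $d(\Pi(s,t)\cdot v,v)=\infty$; the correct route is to note that $\Pi(t)\cdot v$ and $\Pi(s)\cdot v$ both lie in $K(s)$ and bound by $\mathrm{diam}_d\,K(s)$.

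Both defects are repairable, and seeing the repair clarifies why the paper's route is the clean one. Either (i) switch to the paper's bounded metric, where the diameter of \emph{any} image is $\le 1$ and the problem disappears entirely; or (ii) keep the Hilbert metric but replace the Cauchy/Borel--Cantelli estimate by the nested-compacta argument: $\mathrm{diam}_d\,K(t)$ is monotone nonincreasing, so it tends to a limit, and to show the limit is $0$ it suffices that $\inf_s\big(\prod_{r<s}\tau(\fxi(r))\big)\,\mathrm{diam}_d\big(\fxi(s)\cdot\bar B\big)=0$ a.s.\ --- which holds because $\prod\tau\to 0$ while the i.i.d.\ a.s.-finite variables $\mathrm{diam}_d(\fxi(s)\cdot\bar B)$ drop below any fixed level infinitely often. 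That subsequence statement is automatic, needs no moment, and matches the true strength of the lemma; your summability criterion is strictly stronger than what is needed and strictly stronger than what the assumptions supply.
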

This immediately implies the almost sure convergence stated in part $2.$ of Theorem \ref{thm:structure}.
Note that we can conclude that $X_N(t)$ converges in law to $m_N$ as $t\to\infty$.

\begin{lem} \label{th:cvdir2}
The law $m_N$ of $\VNI$ is the unique 
invariant probability, i.e., the unique probability measure on $\bar B$ such that, for all 
bounded continuous $f: \bar B \to \R$,  
$$
\int_{\bar B}  \E \left[f\big( \fxi \cdot v\big)\right]  dm_N(v) = \int_{\bar B}   f( v)  dm_N(v) .
$$
\end{lem}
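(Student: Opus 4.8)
The plan is to derive Lemma~\ref{th:cvdir2} as a direct consequence of Lemma~\ref{th:cvdir}, using the fact that the limit $\VNI$ does not depend on the starting vector. There are two things to establish: that $m_N$ is invariant, and that it is the unique invariant probability measure.

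For invariance, fix a bounded continuous $f:\bar B\to\R$. The Markov chain $X_N(\cdot)$ driven by the recursion \eqref{eq:recursion-general} has transition operator $(Pf)(v)=\E[f(\fxi\cdot v)]$, where $\fxi$ is an independent copy of the environment matrix. Starting the chain from a fixed $v$, Lemma~\ref{th:cvdir} gives $X_N(t)=\Pi(t)\cdot v\to\VNI$ a.s., hence in law, so $\E[f(X_N(t))]\to\int f\,dm_N$ for every $v$. On the other hand, by the Markov property, $\E[f(X_N(t+1))\mid X_N(1)]=(P^t f)(X_N(1))$, and since $X_N(1)=\fxi(1)\cdot v\in\bar B$ one can also write $\E[f(X_N(t+1))]=\E\big[\E[f(\fxi(1)\cdot X'_N(t))\mid \fxi(1)]\big]$ where $X'_N$ is an independent copy; letting $t\to\infty$ and using dominated convergence (with the uniform-in-$v$ convergence from Lemma~\ref{th:cvdir} to justify passing the limit inside) yields $\int_{\bar B}\E[f(\fxi\cdot v)]\,dm_N(v)=\int_{\bar B}f\,dm_N$. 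I would phrase this cleanly as: $m_N P=\lim_t \delta_v P^{t+1}=\lim_t \big(\delta_v P^{t}\big)P=m_N$, where the interchange of limit and $P$ is legitimate because $P$ maps bounded continuous functions to bounded continuous functions (continuity of $v\mapsto \fxi\cdot v$ and bounded convergence) — i.e. $(\,\mu_t\to m_N \text{ weakly}\,)\Rightarrow(\,\mu_t P\to m_N P\text{ weakly}\,)$ since $\mu_t P(f)=\mu_t(Pf)$ and $Pf\in C_b(\bar B)$.

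For uniqueness, suppose $m'$ is any invariant probability on $\bar B$. If $X_N(0)\sim m'$, then $X_N(t)\sim m'$ for all $t$ by invariance. But Lemma~\ref{th:cvdir} asserts $X_N(t)=\Pi(t)\cdot X_N(0)\to\VNI$ a.s.\ (the convergence being uniform in the starting point, hence valid for a random start independent of the environment), so $X_N(t)\to\VNI$ in law, i.e.\ $m'=\text{law}(\VNI)=m_N$. Testing against bounded continuous $f$ gives $\int f\,dm'=\lim_t\E[f(X_N(t))]=\E[f(\VNI)]=\int f\,dm_N$, hence $m'=m_N$.

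I do not expect a genuine obstacle here; this is a soft argument once Lemma~\ref{th:cvdir} is in hand. The only point requiring a little care is the measure-theoretic bookkeeping when the chain is started from a \emph{random} point $X_N(0)$ correlated or not with the environment: one must invoke that the a.s.\ convergence in Lemma~\ref{th:cvdir} is \emph{uniform} in $v\in\bar B$, so that on the full-measure event $\Omega_0$ one has $\sup_{v}\|\Pi(t)\cdot v-\VNI\|\to 0$, and therefore $\|\Pi(t)\cdot X_N(0)-\VNI\|\to 0$ a.s.\ regardless of how $X_N(0)$ is chosen. With that observed, both invariance and uniqueness follow immediately, and the continuity of $v\mapsto\E[f(\fxi\cdot v)]$ needed to identify $m_N$ as a \emph{fixed point of the stated transfer operator} is routine from dominated convergence.
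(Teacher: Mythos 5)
Your proof is correct, and it diverges from the paper's on the invariance step. The paper constructs the almost-sure identity directly: set $V' = \lim_t \Pi(1,t)\cdot x = \lim_t \fxi(2)\cdots\fxi(t)\cdot x$ (the paper has a small off-by-one typo here, writing $\Pi(2,t)$), observe $V'$ has law $m_N$ and is independent of $\fxi(1)$, and then note $\fxi(1)\cdot V' = \lim_t \Pi(t)\cdot x = \VNI$, also of law $m_N$; invariance drops out immediately from this coupling, with no need to verify any continuity. You instead run the generic Markov--Feller argument: $\delta_v P^t \to m_N$ weakly by Lemma \ref{th:cvdir}, and $P$ preserves $C_b(\bar B)$ because $v\mapsto \fxi\cdot v$ is continuous on $\bar B$ for strictly positive $\fxi$, so $m_N P = \lim_t \delta_v P^{t+1} = m_N$. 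Your route is more general-purpose and transparently Markovian, at the cost of having to check the Feller property (routine here); the paper's route is shorter and avoids any topological verification by exploiting the a.s.\ convergence at the level of random variables. For uniqueness both arguments are essentially identical --- the paper integrates $\E[f(\Pi(t)\cdot x)]$ against $m'$ and invokes dominated convergence, while you phrase it probabilistically via a stationary start $X_N(0)\sim m'$ independent of the environment; your explicit appeal to the uniform-in-$v$ convergence to handle the random start is a sound alternative to the paper's DCT, though DCT alone (with pointwise a.s.\ convergence and boundedness of $f$) would have sufficed.
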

%%%
The law $m_N$ is usually called the Furstenberg measure. 
%%%%%%
\begin{proof}
 From Lemma \ref{th:cvdir},  $V'=\lim_t \Pi(2,t) \cdot x$ converges a.s., and has the 
same law $m_N$. The equality $\fxi(1) \cdot V'= \VNI$ is the claimed  invariance property. Moreover, if $m_N'$ is another invariant law, we get by iterating $t$ times,
$$
\int_{\bar B}  \E \left[f\big( \Pi(t) \cdot x\big)\right]  dm_N'(x) = \int_{\bar B}   f( x)  dm_N'(x) .
$$
By dominated convergence, the left-hand side converges to $\E f(\VNI)$, and we can conclude that $m'$ is the law of $\mu_{\infty}$. 
\end{proof}

\begin{lem}
	 The law $m_N$ is invariant for the Markov chain $(X_N(t); t \geq 0)$, and the chain $X_N$ with initial law $m_N$ is ergodic. 
	%%%
	 For any  bounded  continuous $f:\bar B\to \R$ and any initial condition $X_N(0)\in \bar B$,
			\begin{equation}\label{eq:ergodic-sums}
				\lim_{t\to\infty} \frac{1}{t} \sum^t_{s=1} f\big(X_{N}(s)\big)=\E\big[ f(\VNI)\big],
				\quad \P {\rm -a.s.}
				\end{equation}
\end{lem}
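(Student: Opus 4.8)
The plan is to establish the three assertions in turn, using only Lemmas~\ref{th:cvdir} and~\ref{th:cvdir2}, the contraction property behind them, and classical ergodic theory. Throughout I use that the chain is given by the recursion $X_N(t)=\fxi(t)^{*}\cdot X_N(t-1)$ (with $A\cdot v=Av/\|Av\|_1$), so that $X_N(t)=\Pi(0,t)^{*}\cdot X_N(0)$ for any initial point, where $\Pi(0,t)^{*}=\fxi(t)^{*}\cdots\fxi(1)^{*}$. Since the entries of $\fxi$ are i.i.d., $\fxi\eqlaw\fxi^{*}$, so the transition operator $Pf(v)=\E[f(\fxi^{*}\cdot v)]$ of the chain coincides with the operator appearing in Lemma~\ref{th:cvdir2}; hence invariance of $m_N$ for $(X_N(t))_{t\ge0}$ is exactly the invariance asserted there.

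For ergodicity of the stationary chain I would pass to a two-sided realization. By \eqref{eq:Xinfreverse}--\eqref{eq:Xinfreverselaw} the limit $\VINI(t)=\lim_{s\to-\infty}\Pi(s,t)^{*}\cdot v$ exists almost surely, does not depend on $v$, has law $m_N$, and satisfies $\VINI(t)=\fxi(t)^{*}\cdot\VINI(t-1)$. Thus $(\VINI(t))_{t\in\Z}$ is a stationary Markov chain with the transition kernel of $X_N$, and $(\VINI(t))_{t\ge0}$ is a version of the chain started from $m_N$. Moreover $\omega\mapsto(\VINI(t,\omega))_{t\ge0}$ is measurable and intertwines the shift $\theta_1$ on $(\Omega,\mathcal A,\P)$ with the one-sided shift on path space; since the $\fxi(t)$ are i.i.d., $(\Omega,\mathcal A,\P,\theta_1)$ is ergodic (indeed mixing), and any factor of an ergodic system is ergodic, so the stationary chain is ergodic.

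To obtain \eqref{eq:ergodic-sums} for an arbitrary starting point $X_N(0)=v\in\bar B$ I would run a synchronous coupling with $(\VINI(t))$. From the recursion, both $X_N(t)=\Pi(0,t)^{*}\cdot v$ and $\VINI(t)=\Pi(0,t)^{*}\cdot\VINI(0)$ are images of the \emph{same} matrix product applied to points of $\bar B$, so $\mathrm{dist}(X_N(t),\VINI(t))\le\mathrm{diam}(\Pi(0,t)^{*}\bar B)$. The contraction estimate used in the proof of Lemma~\ref{th:cvdir} --- Birkhoff's bound for the Hilbert projective metric, together with $\tau(\fxi^{*})=\tau(\fxi)$ and the strong law of large numbers applied to the i.i.d.\ sequence $(\log\tau(\fxi(s)^{*}))_s$, which is non-positive with negative mean --- gives $\mathrm{diam}(\Pi(0,t)^{*}\bar B)\le \big(\prod_{s=2}^t\tau(\fxi(s)^{*})\big)\,\mathrm{diam}_{\mathrm H}(\fxi(1)^{*}\bar B)\to0$ almost surely (the last factor being finite since $\fxi$ has positive entries a.s.), with the comparison between the Hilbert metric and the ambient metric on $\bar B$ uniform once this diameter is small. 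Hence $\mathrm{dist}(X_N(t),\VINI(t))\to0$ a.s.; as $\bar B$ is compact and $f$ continuous, $f$ is uniformly continuous, so $f(X_N(t))-f(\VINI(t))\to0$ a.s.\ and the Ces\`aro averages of $f(X_N(\cdot))$ and $f(\VINI(\cdot))$ have the same almost sure limit. Birkhoff's ergodic theorem for the stationary ergodic process $(\VINI(s))_{s\ge1}$ identifies that limit as $\E[f(\VINI(0))]=\E[f(\VNI)]$, which is \eqref{eq:ergodic-sums}.

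The substantive point is the coupling step: one must upgrade the qualitative convergence in Lemma~\ref{th:cvdir} to the quantitative statement that the diameter of $\Pi(0,t)^{*}\bar B$ shrinks to zero, keep track of the fact that the chain is driven by the transposed matrices (harmless because $\fxi\eqlaw\fxi^{*}$), and handle the comparison between the Hilbert projective metric and the Euclidean metric near the boundary of the simplex. The remaining ingredients --- invariance from Lemma~\ref{th:cvdir2}, the two-sided realization, the factor argument for ergodicity, and the application of Birkhoff's theorem --- are routine.
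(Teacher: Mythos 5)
Your proof is correct and follows essentially the same route as the paper: invariance via the stationary backward realization $\VINI$ together with $\fxi\eqlaw\fxi^{*}$, ergodicity inherited from the i.i.d.\ driving matrices, and the contraction property of the projective action to merge an arbitrary initial condition with the stationary chain before applying Birkhoff's theorem. The only difference is that you supply self-contained arguments (the factor-of-an-i.i.d.-shift argument for ergodicity and the explicit coupling bound $d(X_N(t),\VINI(t))\le c(\Pi(0,t))\to 0$) where the paper simply cites Lemma 3.3 of \cite{Hennion97} and the contraction estimate of Appendix \ref{app:contr}.
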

\begin{proof}
We can write \eqref{eq:Xinfinvariant} as $\fxi(0) \cda \VNAI= \VNAI \circ \theta_{-1}$. Transposing this identity and using that
$(\fxi(0), \VNI,\VNI \circ \theta_{-1}) \eqlaw (\fxi(1), \VINI,\VINI \circ \theta_{1})$, we see that 
\begin{equation*}
\VINI \cdot \fxi(1) = \VINI \circ \theta_1 .
\end{equation*}
Since $\VINI $ and $\fxi(1) $ are independent and $\VINI$ is stationary with law $m_N$, this equality implies that $m_N$ is invariant. 

Ergodicity is shown in Lemma 3.3 in \cite{Hennion97}. Finally, the pointwise ergodic theorem \eqref{eq:ergodic-sums} follows 
also from  the previous 
and the contraction property, see Appendix \ref{app:contr}.
\end{proof}

%%%%%%%%%%%%%%%%%%%%%%%%%%%
%%%%%%%%%%%%%%%%%%%%%%%%%%%%%%%%%%%%
%%%%%%%%%%%%%%%%%%%%%%%%%%%%%%%%%%%%
%%%%%%%%%%%%%%%%%%%%%%%%%%%%%%%%%%%%

\subsection{Free energy and Lyapunov exponents} \label{sec:FELE}
The Perron-Frobenius eigenvalue of the (strictly) positive matrix $\Pi(t)$ is the r.v.
\begin{equation} \nn
\lambda_N^{\rm PF}(t)= \min_{x \in (\R_+^*)^N} \max_{1 \leq i \leq N} \frac{\big(\Pi (t) x\big)_i}{x_i} = 
\max_{x \in (\R_+^*)^N} \min_{1 \leq i \leq N} \frac{\big(\Pi (t) x\big)_i}{x_i}.
\end{equation}
We start by stating that all coefficients of the matrix $\Pi(t)$ grow like the Perron-Frobenius eigenvalue.
%%%%%%%%%
\begin{lem} \label{lem:hennion} \cite[Lemma 2.1]{HennionHerve08} Fix $N$. We have for all $t\geq 2$, 
\begin{equation} \label{eq:H1}
0 \leq \log \frac{\max_{i,j \leq N} \Pi_{i.j}(t)}{\min_{i,j\leq N} \Pi_{i.j}(t)} \leq  \log \frac{\max_{i,j\leq N} \omega_{i.j}(1)}{\min_{i,j\leq N} \omega_{i.j}(1)} +  \log \frac{\max_{i,j\leq N} \omega_{i.j}(t)}{\min_{i,j\leq N} \omega_{i.j}(t)},
\end{equation}
and for all $ y  \in (\R_+^*)^N$,
\begin{equation} \label{eq:H2}
1 \leq  \frac{\max_{i\leq N} (\Pi(t){y})_i}{\min_{i\leq N} (\Pi(t){y})_i} \leq  \frac{\max_{i,j\leq N} \omega_{i.j}(1)}{\min_{i,j\leq N} \omega_{i.j}(1)} .
\end{equation}
Moreover, 
\begin{equation} \label{eq:H3}
\sup_{t \geq 1} \big\vert 
\log \lambda_N^{\rm PF} (t) - 
\log {\|\Pi(t) {\bf 1} \|_1 }\big\vert  < \8 \quad {\rm a.s.} 
\end{equation}
\end{lem}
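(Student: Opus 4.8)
The plan is to derive the three claims from the classical Birkhoff/Hopf contraction picture for positive matrices, treating \eqref{eq:H1}--\eqref{eq:H2} as warm-up and concentrating on \eqref{eq:H3}. First I would record the elementary fact that for a strictly positive $N\times N$ matrix $M$ with entries $m_{ij}$, and any $y\in(\R_+^*)^N$, one has the pointwise two-sided bound $\bigl(\min_{i,j} m_{ij}\bigr)\|y\|_1\le (My)_k\le \bigl(\max_{i,j} m_{ij}\bigr)\|y\|_1$ for every $k$; dividing through shows that the ratio $\max_k(My)_k/\min_k(My)_k$ is at most $\max_{i,j}m_{ij}/\min_{i,j}m_{ij}$. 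Applying this with $M=\fxi(1)$ and $y=\Pi(1,t)\mathbf 1$ (or any positive $y$) gives \eqref{eq:H2} directly, since $\Pi(t)y=\fxi(1)\bigl(\Pi(1,t)y\bigr)$ and $\Pi(1,t)y$ is a positive vector. For \eqref{eq:H1} I would write $\Pi(t)=\fxi(1)\,\Pi(1,t-1)\,\fxi(t)$, observe that the middle factor has all entries strictly positive, and use that multiplying on the left by $\fxi(1)$ and on the right by $\fxi(t)$ each contributes at most one factor of the corresponding $\max/\min$ ratio to the oscillation of the entries — this is exactly the submultiplicativity of the projective (Hilbert) diameter under the positive maps $v\mapsto \fxi(1)v$ and $v\mapsto \fxi(t)^*v$, with the diameter of the image of the whole positive cone under a strictly positive matrix being finite and controlled by its $\max/\min$ entry ratio.

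For \eqref{eq:H3}, the key observation is that $\lambda_N^{\rm PF}(t)$ and $\|\Pi(t)\mathbf 1\|_1$ differ only by a factor that is pinched between two quantities governed by the entrywise oscillation of $\Pi(t)$, which \eqref{eq:H1} has just shown to be bounded uniformly in $t$ in terms of the (finite, $t$-independent-in-law) random quantities attached to the first and last factors. Concretely, from the variational (Collatz--Wielandt) formulas for $\lambda_N^{\rm PF}(t)$ displayed before the lemma, taking $x=\mathbf 1$ gives
\begin{equation*}
\min_{i\le N}\bigl(\Pi(t)\mathbf 1\bigr)_i\ \le\ \lambda_N^{\rm PF}(t)\ \le\ \max_{i\le N}\bigl(\Pi(t)\mathbf 1\bigr)_i .
\end{equation*}
On the other hand $N^{-1}\max_i(\Pi(t)\mathbf 1)_i\le N^{-1}\|\Pi(t)\mathbf 1\|_1\le \max_i(\Pi(t)\mathbf 1)_i$ trivially, and $\min_i(\Pi(t)\mathbf 1)_i\le N^{-1}\|\Pi(t)\mathbf 1\|_1$. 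Combining, $\bigl|\log\lambda_N^{\rm PF}(t)-\log\|\Pi(t)\mathbf 1\|_1\bigr|\le \log N+\log\bigl(\max_i(\Pi(t)\mathbf 1)_i/\min_i(\Pi(t)\mathbf 1)_i\bigr)$, and the last ratio is bounded by \eqref{eq:H2} (applied with $y=\mathbf 1$) by $\max_{i,j}\omega_{i,j}(1)/\min_{i,j}\omega_{i,j}(1)$, a single almost surely finite random variable not depending on $t$. Taking the supremum over $t\ge1$ then gives \eqref{eq:H3}.

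The one point that needs a little care — and the main (mild) obstacle — is that \eqref{eq:H2} as I invoked it requires $\Pi(1,t)\mathbf 1$ to be a genuinely positive vector so that left-multiplication by $\fxi(1)$ produces the claimed oscillation bound; for $t=1$ this degenerates ($\Pi(1,1)=\mathrm I_N$, fine) but one should just note the statement is for $t\ge2$ anyway, and for $t\ge2$ the vector $\Pi(1,t-1)\fxi(t)\mathbf 1$ has strictly positive entries a.s. since all $\omega$'s are positive. I would also remark that \eqref{eq:H1}--\eqref{eq:H3} are stated verbatim as \cite[Lemma 2.1]{HennionHerve08}, so in the write-up it suffices to cite that reference and include the short derivation above only for the reader's convenience; the substance is entirely the Birkhoff contraction estimate plus the Collatz--Wielandt bracketing of the Perron eigenvalue, neither of which requires any hypothesis on the $\omega$'s beyond positivity.
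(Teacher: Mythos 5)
Your argument is correct and follows essentially the same route as the paper: the oscillation bounds \eqref{eq:H1}--\eqref{eq:H2} are obtained by isolating the first and last factors of the product, and \eqref{eq:H3} is deduced from \eqref{eq:H2} together with the equivalence of $\|\cdot\|_1$ and $\|\cdot\|_\infty$ on $\R^N$. The only cosmetic difference is how you bracket $\lambda_N^{\rm PF}(t)$ between $\min_i(\Pi(t)\mathbf 1)_i$ and $\max_i(\Pi(t)\mathbf 1)_i$: you plug $x=\mathbf 1$ into the Collatz--Wielandt formulas, whereas the paper pairs the $\ell^1$-normalized left Perron eigenvector against $\mathbf 1$, and both yield the identical two-sided bound.
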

%%%%%
\begin{proof}
By positivity, we see that for all $m, m', n, n' \leq N$,
\begin{eqnarray*}
\Pi_{m,n}(t) = \sum_{k,\ell \leq N} \omega_{m,k}(1) \Pi_{k,\ell}(1,t-1) \omega_{\ell,n}(t)
&\leq& 
\frac{\max_{i,j\leq N} \omega_{i.j}(1)}{\min_{i,j\leq N} \omega_{i.j}(1)} \times \Pi_{m',n}(t)
\\
\Pi_{m,n}(t) 
&\leq& 
\frac{\max_{i,j\leq N} \omega_{i.j}(t)}{\min_{i,j\leq N} \omega_{i.j}(t)} \times \Pi_{m,n'}(t).
\end{eqnarray*}
This implies \eqref{eq:H1}--\eqref{eq:H2}.
By Perron-Frobenius theorem, $\exists x>0$ with
$\|x\|_1=1$ and $x^* \Pi(t) = \lambda_N^{\rm PF}(t) x^*$. 
Then, on the one hand,
\begin{eqnarray*}
x^* \Pi(t) {\bf 1} =  \lambda_N^{\rm PF}(t) x^* {\bf 1} =  \lambda_N^{\rm PF}(t),
\end{eqnarray*}
while we can estimate
\begin{equation}\nn
x^* \Pi(t) {\bf 1} = \sum_{i\leq N} x_i (\Pi (t){\bf 1})_i \leq
\|\Pi(t) {\bf 1}\|_\8  % \leq \|\Pi(t) {\bf 1}\|_1 ,
\end{equation}
and
\begin{eqnarray*}
x^* \Pi(t) {\bf 1} = \sum_{i\leq N} x_i (\Pi (t){\bf 1})_i &\geq& 
\min_i(\Pi(t) {\bf 1})_i \; \|x\|_1\\
&=& 
\min_i(\Pi(t) {\bf 1})_i \\
&\stackrel{\eqref{eq:H2}}{\geq} & \frac{\min_{i,j\leq N} \omega_{i.j}(1)}{\max_{i,j\leq N} \omega_{i.j}(1)} 
\times \max_i(\Pi (t){\bf 1})_i 
\end{eqnarray*}
Then the claim follows using that $N^{-1}|y|_1 \leq |y|_\8 \leq |y|_1$ with $y=\Pi (t){\bf 1} \in \R^N$.
\end{proof}

We are now able to show the existence of the free energy and the Gaussian fluctuations of the logarithm of the partition function.

\begin{proof}[Proof of Theorem \ref{thm:gene-intro}]
For a $N\times N$-matrix $\chi$, set  $\| \chi\|_1=\sum_{i,j=1}^N |\chi_{i,j}|$.
	Since the norm $\|\cdot\|_1$ is submultiplicative, we see that the doubly indexed sequence $\log \|\Pi(s,t)\|_1 (0 \leq s \leq t)$ is  subbadditive and, by the subbadditive ergodic theorem (see e.g. the nice proof in \cite{Steele89}), it follows that $t^{-1}\log \| \Pi(t)\|_1$ converges a.s. to a limit $v_N$, and similarly for the entries $t^{-1}\log \| \Pi_{i,j}(t)\|_1$ by Lemma \ref{lem:hennion}. Moreover, from Theorem 3 in \cite{Hennion97},  a central limit theorem holds. 
It then suffices to recall that $Z_N(t,j)= ( \Pi(t)^* {\bf 1} )_j $.

Strict positivity of the variance follows from two results in \cite{Hennion97}. By Corollary 3 therein,
$\sigma_N =0$ implies that $(e^{-t v_N} \|\Pi(t)\|_1)_{t \geq 1}$ is a tight sequence in $(0,  \8)$. By Theorem 5, this is equivalent to a certain geometric property of the support of the law of $\fxi$, which is clearly not satisfied for  $\fxi$ with non-constant, i.i.d. entries.

It remains to prove \eqref{eq:vN=}. 
By Lemma \ref{lem:hennion}, we have
\begin{eqnarray} \nn
v_N &=& \lim_{t \to \8} \frac{1}{t} \log \|Z_N(t)\|_1 \\ \nn
&=& \lim_{t \to \8} \frac{1}{t}  \sum_{s=1}^t  \log \frac{\|Z_N(s)\|_1}{\|Z_N(s-1)\|_1}\\ \nn
&\stackrel{(\ref{eq:recursion-general})}{=}& \lim_{t \to \8} \frac{1}{t}  \sum_{s=1}^t  \log \| \fxi(s)^* X_N(s-1) \|_1
\\ \nn
&\stackrel{\rm ergodic\ th.}{=}& \E  \log \| \fxi(1)^* X_N(0) \|_1 ,
\end{eqnarray}
which is equal to the RHS of \eqref{eq:vN=}. 
\end{proof}

%%%%%%%%%%%%%%%%%%%%%%%%%%%%%%%%%%%%
%%%%%%%%%%%%%%%%%%%%%%%%%%%%%%%%%%%%

\subsection{Infinite volume measure} \label{sec:infvolmeas}

In this section we prove the existence of the infinite volume polymer  measure and of a co-variant measure.

\begin{proof}[Proof of Theorem \ref{prop:infinitepolymer}] Recall the definition \eqref{eq:polymer-measure} of the 
finite horizon P2L polymer measure. It is well known, and easily checked, that $P_{0,i;T,\star}^\omega$ is a  time-inhomogeneous Markov chain 
on $\{1,\ldots,N\}$, with 1-step transitions given for $0 \leq t <T$ by
  \begin{eqnarray}\nn
  P_{0,i;T,\star}^\omega(j_{t+1}=\ell \big \vert j_t=k) &=& \frac{\omega_{k,\ell}(t+1) Z_N(t+1,\ell;T,\star)}
   {\sum_{1\leq m \leq N} \omega_{k,m}(t+1) Z_N(t+1,m;T,\star)} \\  \nn
   &=& \frac{\omega_{k,\ell}(t+1) \frac{Z_N(t+1,\ell;T,\star) }{ \sum_{\ell'} Z_N(t+1,\ell';T,\star)}}
   {\sum_{1\leq m \leq N} \omega_{k,m}(t+1) \frac{Z_N(t+1,m;T,\star)}{ \sum_{\ell'} Z_N(t+1,\ell';T,\star)}} 
\end{eqnarray}

   But, a.s., 
   $$  \left(\frac{Z_N(t+1,\ell;T,\star) }{ \sum_{\ell'} Z_N(t+1,\ell';T,\star)}\right)_{\ell=1}^N=\Pi(t+1,T) \cdot {\bf 1} \longrightarrow \VNI(t+1) $$
   as $T \to \8$, so the above transition converges,
    $$
   P_{0,i;T,\star}^\omega(j_{t+1}=\ell \big \vert j_t=k)  \longrightarrow
   \frac{\omega_{k,\ell}(t+1)\VNI(t+1,\ell) }{\sum^N_{\ell'=1}\omega_{k,\ell'}(t+1)\VNI(t+1,\ell')} \;.
      $$
This proves that the finite horizon P2L polymer measure converges to the Markov chain $P^\omega$ given by the transition probabilities \eqref{eq:polymerinfini}. In order to obtain \eqref{eq:polymerinfini2}, one can use a shifted version of \eqref{eq:Xinfinvariant},
	$$\fxi(t+1) \cdot \VNI(t+1)= \VNI(t), $$
	to rewrite the denominator in the RHS of \eqref{eq:polymerinfini} as
	$$
	{\sum^N_{\ell'=1}\omega_{k,\ell'}(t+1)\VNI(t+1,\ell')} = {\VNI(t,k) \| \fxi(t+1) \VNI(t+1) \|_1} .
	$$

We end by proving the second part of Theorem \ref{prop:infinitepolymer}. Note that \eqref{def:co-var} writes
\begin{equation} \label{def:co-var2}
\nu_N (t, j):= \frac{\VINI(t,j) \VNI(t,j) }{ \VINI(t)^* \VNI(t) }\; ,
\end{equation}
where $\VINI(t)^* \VNI(t) = \sum_{k=1}^N \VINI(t,k) \VNI(t,k)$,
so that, with \eqref{eq:polymerinfini2},
\begin{equation}    \label{eq:like-a-rolling-stone}
\nu_N (t, k) P^\omega (j_{t+1}=\ell \big \vert j_t\!=\!k) = 
  \frac{\VINI(t,k) \times \omega_{k,\ell}(t\!+\!1) \times  \VNI(t\!+\!1,\ell)}{\| \fxi(t\!+\!1) \VNI(t\!+\!1) \|_1 \times  \big(\VINI(t)^* \VNI(t)\big) } \;.
 \end{equation}
 
Summing over $k$ and using \eqref{eq:Xinfreverse} we get
\begin{eqnarray} \label{eq:like-a-rolling-stone3}
\sum_{k=1}^N \nu_N (t, k) P^\omega (j_{t\!+\!1}\!=\!\ell \big \vert j_t=k) 
&=& 
 \frac{\VINI(t\!+\!1,\ell) \times \| \fxi(t\!+\!1)^* \VINI(t) \|_1
 \times  \VNI(t\!+\!1,\ell)}{\| \fxi(t\!+\!1) \VNI(t\!+\!1) \|_1 \times  \big( \VINI(t)^* \VNI(t) \big) } 
\end{eqnarray}
Summing now over $\ell$ we derive the identity
\begin{equation} \label{eq:identity}
 {   \| \fxi(t\!+\!1) \VNI(t\!+\!1) \|_1  \;  \big( \VINI(t)^* \VNI(t) \big) }
 =
 {\| \fxi(t\!+\!1) \VINI(t) \|_1 \;  \big( \VINI(t\!+\!1)^* \VNI(t\!+\!1)\big)
 } 
\end{equation}

Using \eqref{eq:identity} back in the RHS of \eqref{eq:like-a-rolling-stone3} we see that it is equal to $\nu_N(t+1, \ell)$, proving the claim. 
  \end{proof}
%%%%%%%%%%%%%%%%%%%%%%%%%%%%%%%%%%%%

\begin{rmk} We have in fact a time-reversal property.
Define the (time-inhomogeneous) transition probability ${\overleftarrow{P}^{\omega}}$ on $\{1,2,\ldots N\}$ by
\begin{equation} \nn
\overleftarrow{P}^\omega (j_{t\!+\!1}\!=\!k \big \vert j_t=\ell) 
=
 \frac{\omega_{k,\ell}(t+1) \VINI(t,k) }{
  \VINI(t+1,\ell)
  \| \fxi(t\!+\!1)^* \VINI(t\!+\!1\|_1} 
\end{equation}
Then, by \eqref{eq:identity}, the equality \eqref{eq:like-a-rolling-stone} writes
\begin{equation}\nn
\nu_N(t,k) {P}^\omega (j_{t\!+\!1}\!=\!\ell \big \vert j_t=k) 
=
\nu_N(t+1, \ell) \overleftarrow{P}^\omega (j_{t\!+\!1}\!=\!k \big \vert j_t=\ell) ,
\end{equation}
from which stationarity follows immediately. 
The time-reversed of the chain discussed in item 2) of Theorem \ref{prop:infinitepolymer} is the chain with transitions
${\overleftarrow{P}^{\omega}}$ and starting at time $0$ from the law $\nu_N(0,\cdot)$.
\end{rmk}

%%%%%%%%%%%%%%%%%%%%%%%%%%%%%%%%%%%%
%%%%%%%%%%%%%%%%%%%%%%%%%%%%%%%%%%%%
%%%%%%%%%%%%%%%%%%%%%%%%%%%%%%%%%%%%

\section{Exact solution for stable laws}

We consider the particular cases when  $ \omega_{i,j} (t) \eqlaw \Sa$, the stable law of index $\alpha \in (0,1)$, see \cite{BertoinS, Durrett}.  We recall that it is supported by $\R_+$ and that, for $\l>0$,
\begin{eqnarray}\label{eq:laplTa}
	\E[e^{-\lambda \Sa}] = e^{-\lambda^{\alpha}}.
\end{eqnarray}

%%%%%%%%%%%%%%%%%%%%%%%%%%%%%%%%%%%
%%%%%%%%%%%%%%%%%%%%%%%%%%%%%%%%%%%

\subsection{The random walk representation} \label{sec:RWrepsentation}

This section is devoted to the proof of Theorem \ref{thm:structure-stable} which summarizes the probabilistic structure of the model.

Let by ${\mathcal F}_t$ be the $\sigma$-field generated by the $\omegaij(s)$ for $s \leq t $ and all $i,j$.
Property  \eqref{eq:stabilite2} directly implies that for each $j$, conditionally on $\mathcal{F}_t$, $S_N(t+1,j)$ has law $\Sa$, for all $1\leq j \leq N$. 
 To deal with the $N$-vector, we fix
 $\lambda_j>0$ for $1 \leq j \leq N$ and we compute,
 \begin{eqnarray} \nonumber
\E[  e^{-\sum_{j=1}^N \la_j Z_N(t+1,j)} | {\mathcal F}_t ]&=&
 \prod_{i, j=1}^N  \E[  e^{-\la_j Z_N(t,i) \omega_{i,j}(t+1)} | {\mathcal F}_t ]
\\ \label{eq:lastarria}
&=& \exp\{ - \sum_{j=1}^N \lambda_j^\alpha  \sum_{i=1}^N  Z_N(t,i)^\alpha  \}
\qquad ({\rm by\ } \eqref{eq:laplTa}).
\end{eqnarray}
Then
\begin{eqnarray}
	\E[  e^{- \sum_{j=1}^N \la_j S_N(t+1,j)} | {\mathcal F}_t ]
&=& \exp\{ - \sum_{j-1}^N \lambda_j^\alpha \},
\end{eqnarray}
which shows that, conditionally on $\mathcal{F}_t$, $S_N(t+1,j)$ with $ 1\leq j \leq N)$ have law $\Sa$ for all $1\leq j \leq N$ and
are conditionally independent.  As a consequence, the random variables $\{S_N(t+1,j):\, t\geq 1,\, j=1,\cdots,N\}$ are i.i.d. with common law $\Sa$. This proves Proposition \ref{thm:structure-stable}, part 1.

Now, recall the identity \eqref{eq:indepsum} 
\begin{eqnarray}\label{eq:logdecomposition}
	\log Z_N(t,j) = \log S_N(t,j) + \phi_N(t-1),
\end{eqnarray}
where $\phi_N(t)$ denotes the height of the polymer at time $t$:
\begin{equation}
\label{def:phia} \phi_N(t)= \log \|Z_N(t)\|_{\alpha}.
\end{equation}
By definition of the $\alpha$-norm,
\begin{eqnarray}
\phi_N(t+1)
&=& \alpha^{-1} \log (\sum_{i=1}^N  Z(t+1,i)^\alpha) \nonumber \\
 &=& \alpha^{-1} \log \sum_{i=1}^N  \frac{Z(t+1,i)^\alpha}{\| Z_N(t)\|^{\alpha}_{\alpha}}
+ \log \|  Z_N(t)\|_{\alpha} \nonumber \\ \label{eq:Satt}
 &=&  \alpha^{-1} \log \sum_{i=1}^N  S_N(t+1,i)^{\alpha} + \phi_N(t)\\
 &=& \log \| S_N(t+1) \|_{\alpha} + \phi_N(t).
\end{eqnarray}
where $S_N(t)$ denotes the vector $(S_N(t,1),\cdots,S_N(t,N))$.
From the independence observed above, the sequence $\{\Upsilon_N(t):\, t\geq 1\}$ defined by
$$
\Upsilon_N(t)=  \log \| S_N(t) \|_{\alpha}
$$
 is i.i.d., and $(\phi_N(t))_t$ is a random walk with jumps $\Upsilon_N(\cdot)$. The identity \eqref{eq:logdecomposition}  shows that $\{\log Z_j(t):\, j=1,\cdots,N\}$ is an independent $N$-sample of the $\alpha$-stable law with an independent shift by $ \phi_N(t-1)$. The above discussion proves Theorem \ref{thm:structure-stable}, part 3 and readily implies part 5.

We now turn to the proof of part 5 and 6 in Theorem \ref{thm:structure-stable}. 
Let $S=(S_1,\cdots,S_N)$ be a vector with i.i.d. entries following the law $\Sa$ and let $X=\frac{S}{||S||_1}$.
Then,
\begin{eqnarray*}
	\fxi(0) \cdot X = \fxi(0) \cdot S
						   =\frac{\fxi(0)\, S}{|| \fxi(0) \, S||_1}
						   = \frac{\tilde{S}}{||\tilde{S}||_1},
\end{eqnarray*}
where $\tilde{S}=\frac{\fxi(0) \, S}{||\fxi(0)\, S||_{\alpha}}$. By the conditioning argument above, we see that $\tilde{S}$ has the same distribution as $S$. Hence, $X$ and $\fxi(0)\cdot X$ have the same distribution and the law of $X$ is indeed the unique invariant measure for the system.

%%%%%%%%%%%%%%%%%%%%%%%%%%%%%%%%%%%%
%%%%%%%%%%%%%%%%%%%%%%%%%%%%%%%%%%%%

%%%%%%%%%%%%%%%%%%%%%%%%%%%%%%%%%%%%
%%%%%%%%%%%%%%%%%%%%%%%%%%%%%%%%%%%%

\subsection{Asymptotic of the Lyapunov exponent for large $N$:} \label{sec:asymptLyap}

 We will prove Proposition \ref{thm:main-stable}, namely,
\begin{equation}
\nn
\alpha v_N =\log N + \log \log N + \log c_{\alpha}+ o(1) , \quad \alpha^2 \sigma^2_N = \frac{\pi^2}{3 \log N} + o(\frac{1}{\log N}),
\end{equation}
with $c_\alpha$ from \eqref{eq:calpha}. Recall from Theorem  \ref{thm:structure-stable} that
\begin{equation} \nn
v_N = \E \Upsilon_N = \E  \log \| S_N\|_{\alpha} = \alpha^{-1} \E \log \sum^N_{j=1} S^{\alpha}(j),
\end{equation}
for $S_N=(S(j);\, j=1,\cdots,N)$ an $N$-sample of $\Sa$ independent random variables, and 
\begin{eqnarray}
	\sigma^2_N = \E \left[ \left( \Upsilon_N- v_N\right)^2\right]
\end{eqnarray}

Note that $\P[S^{\alpha}_1 > x] \sim \frac{1}{x \Gamma(1-\alpha)}$ so that the $S(j)^{\alpha}$'s are in the domain of attraction of the totally asymetric stable law of index $1$ that we will denote by $\S$. This is the stable law of index $\alpha=1$, with characteristic function given for 
$u \in \R$ by
\begin{eqnarray}  \label{def:Stable1} 
\E e^{iu \S_{}}&=& 
\exp \left\{ \int_1^\8 (e^{iux}-1) \frac{dx}{x^2} + \int_0^1 (e^{iux}-1-iux) \frac{dx}{x^2}  
\right\} \\   \nonumber
&=& 
\exp \left\{ iCu - \frac{\pi}{2} |u| \big\{ 1+ i \frac{2}{\pi} {\rm sign}(u) \ln |u| \big\} \right\}
\end{eqnarray}
for some real constant $C$ defined by the above equality.
It takes real values, not only positive ones.  We can then appeal to a general fluctuation result:
\begin{prop}  \label{lem:bbb}
	Suppose $(X_i)_i$ is a family of i.i.d. positive random variables such that
	$P[X_1>x] \sim x^{-1} L(x),$
	with $L(x)$ a slowly varying function. Let 
	\begin{eqnarray}
		a_N = \inf\{x:\, P[X_1>x]\leq 1/N\} \quad \rm{and} \quad b_N = N \, \E[X_1 {\bf 1}_{X_1<a_n}].
	\end{eqnarray}
	Then,
	\begin{eqnarray}
		\frac{\sum^N_{i=1} X_i-b_N}{a_N} \cvlaw \mathcal{S}.
	\end{eqnarray}
\end{prop}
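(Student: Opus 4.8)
The plan is to recognize Proposition \ref{lem:bbb} as a classical result from the theory of sums of i.i.d.\ random variables in the domain of attraction of a stable law, specifically the boundary case $\alpha = 1$ (Cauchy-type stable law, but here totally asymmetric since the $X_i$ are positive). First I would recall the general criterion: a distribution belongs to the domain of attraction of a stable law of index $\alpha \in (0,2)$ if and only if the tails are regularly varying with index $-\alpha$ and (for $\alpha < 2$) the two tails are balanced. Here the $X_i$ are positive with $\P[X_1 > x] \sim x^{-1} L(x)$, $L$ slowly varying, which is exactly the regular variation of index $-1$ with all the mass on the right tail, so the limit law, if it exists, must be the totally asymmetric stable law of index $1$, i.e.\ $\S$ as defined in \eqref{def:Stable1}.

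The core of the argument is then to verify that the centering $b_N = N\,\E[X_1 \mathbf{1}_{X_1 < a_N}]$ and the scaling $a_N$ (the $1/N$-quantile of the tail) are the correct normalizing sequences. The standard route is via characteristic functions: write $\E \exp\{ i u (\sum_{i=1}^N X_i - b_N)/a_N \} = \big( \E \exp\{ i u X_1 / a_N \} \big)^N e^{-iub_N/a_N}$ and show that $N \log \E \exp\{iuX_1/a_N\} - iub_N/a_N \to \log \E e^{iu\S}$. To do this I would split the expectation at the level $a_N$: the contribution of $\{X_1 \geq a_N\}$ is handled by Karamata-type tail estimates, using that $N\,\P[X_1 > a_N x] \to x^{-1}$ for each $x > 0$ and a dominated-convergence / uniform-convergence argument for slowly varying functions, producing the term $\int_1^\infty (e^{iux}-1)\,\frac{dx}{x^2}$; the contribution of $\{X_1 < a_N\}$, after subtracting the linear centering term $iub_N/a_N = iuN\,\E[X_1\mathbf{1}_{X_1<a_N}]/a_N$, is expanded to second order and, using $\E[X_1^2 \mathbf{1}_{X_1 < a_N}] \sim a_N^2 L(a_N)$-type Karamata bounds together with $N L(a_N)/a_N \to 1$, converges to $\int_0^1 (e^{iux}-1-iux)\,\frac{dx}{x^2}$. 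Summing the two pieces gives exactly the exponent $\psi$ of $\S$, and Lévy's continuity theorem finishes the proof.

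Rather than reproving this from scratch, the cleanest presentation is to cite it: this is a special case of the classical stable limit theorem, e.g.\ in the form found in \cite{Durrett} (Section on stable laws) or in Feller's treatment; the only mildly non-standard point is that we want the specific centering $b_N = N\,\E[X_1\mathbf{1}_{X_1<a_N}]$ and the specific scaling $a_N = \inf\{x : \P[X_1>x] \le 1/N\}$ rather than arbitrary admissible sequences, but these are precisely the canonical choices in that reference, so no additional work is needed beyond matching notation. I would therefore state the proof as: observe that the hypotheses place $X_1$ in the domain of normal attraction structure for the $\alpha=1$ stable law with the indicated norming sequences, invoke the classical theorem, and identify the limit as the totally asymmetric stable law $\S$ with characteristic exponent \eqref{def:Stable1} because the left tail is absent.

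The main obstacle — or rather the only delicate point — is the treatment of the centering sequence in the $\alpha = 1$ case, where $\E X_1 = \infty$ and yet the truncated mean $\E[X_1 \mathbf{1}_{X_1<a_N}]$ diverges only logarithmically-slowly (like $L$ composed with a power), so that the ``drift'' $b_N/a_N$ is of the same order as $\log N$ and must be tracked exactly; a careless truncation loses the finite limiting constant $C$ in \eqref{def:Stable1}. Everything else — the right-tail contribution and the quadratic remainder — is routine Karamata calculus once the quantile asymptotics $N\,\P[X_1 > a_N x] \to x^{-1}$ are in hand. I expect the proof in the paper to simply defer to a textbook reference for all of this.
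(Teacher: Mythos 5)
Your proposal is correct and matches the paper's proof, which simply observes that the statement is a particular case of the classical stable limit theorem as given in \cite[Th.\ 3.7.2]{Durrett}, with the canonical norming sequences $a_N$ and $b_N$. Your additional sketch of the characteristic-function argument is accurate but not needed, since the citation carries the full weight of the proof exactly as you anticipated.
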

\begin{proof} 
	This is a particular case of \cite[Th. 3.7.2]{Durrett}.
\end{proof}
	
In our case, we can replace $a_N, b_N$ by their leading order (denoting them with the same symbol for simplicity): with $a_N = \frac{N}{\Gamma(1-\alpha)}$ and $b_N = \frac{N\log N}{\Gamma(1-\alpha)}$, we have
\begin{eqnarray}\label{eq:renormstable}
	\S_N:=\frac{\Gamma(1-\alpha)}{N} \times \left\{  \sum^N_{j=1} S(j)^{\alpha} - \frac{N\log N}{\Gamma(1-\alpha)}\right\} \cvlaw \S,
\end{eqnarray}
as $N\to\infty$. In particular,
\begin{eqnarray}
	\frac{\sum^N_{j=1} S(j)^{\alpha} }{N \log N}    \to \frac{1}{\Gamma(1-\alpha)},
\end{eqnarray}
in probability, as $N\to\infty$. The asymptotics for $v_N$ follows from next proposition:
%%%%%%%%%%%%%%%%%%%%%%%%%%
\begin{lem}  \label{prop:aaa}
	\begin{eqnarray}
		\lim_{n\to \infty}\E \left[ \log \frac{\sum^N_{j=1} S(j)^{\alpha} }{N \log N} \right] =-\log \Gamma(1-\alpha).
	\end{eqnarray}
\end{lem}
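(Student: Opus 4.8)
The plan is to establish a uniform integrability statement that upgrades the convergence in probability of $W_N := \frac{\Gamma(1-\alpha)}{N\log N}\sum_{j=1}^N S(j)^\alpha$ to $1$ (which follows from \eqref{eq:renormstable}) into convergence of $\E[\log W_N]$ to $\log 1 = 0$. Since $\log(\cdot)$ is continuous, $\log W_N \to 0$ in probability, so it suffices to show that the family $\{\log W_N : N \geq N_0\}$ is uniformly integrable; then $\E[\log W_N] \to 0$, which is exactly the claim after unraveling the normalization.

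For uniform integrability I would control the positive and negative parts of $\log W_N$ separately. For the positive part, $(\log W_N)_+ \leq W_N$, so it is enough to bound $\sup_N \E[W_N \mathbf{1}_{W_N > M}]$, and in fact one can even show $\sup_N \E[W_N^p] < \infty$ for some $p$ slightly larger than $1$ is \emph{false} here (the mean of $S^\alpha$ is infinite), so instead I would use the truncation already present in Proposition \ref{lem:bbb}: write $S(j)^\alpha = S(j)^\alpha \mathbf{1}_{S(j)^\alpha \leq a_N} + S(j)^\alpha \mathbf{1}_{S(j)^\alpha > a_N}$ with $a_N = N/\Gamma(1-\alpha)$, note the truncated sum has expectation $b_N \sim N\log N/\Gamma(1-\alpha)$ and variance $O(N a_N) = O(N^2)$ (since $\E[(S^\alpha)^2 \mathbf{1}_{S^\alpha \leq a}] \sim a/\Gamma(1-\alpha)$ by the tail $\P[S^\alpha > x]\sim 1/(x\Gamma(1-\alpha))$), and the untruncated part contributes a negligible number of terms with high probability. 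This gives a Chebyshev-type tail bound $\P[W_N > 2] \leq C/(\log N)^2$ and more generally good control of $\E[(\log W_N)_+\mathbf{1}_{(\log W_N)_+ > M}]$, uniformly in $N$.

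The more delicate part is the left tail: I need $\sup_N \E[(\log W_N)_- \mathbf{1}_{(\log W_N)_- > M}] \to 0$ as $M\to\infty$, i.e.\ control of the event that $\sum_j S(j)^\alpha$ is much smaller than $N\log N$. Here I would use the lower bound $\sum_{j=1}^N S(j)^\alpha \geq \max_{j\leq N} S(j)^\alpha$, and the order statistics of the i.i.d.\ family $(S(j)^\alpha)$: since $\P[S^\alpha > x]\sim 1/(x\Gamma(1-\alpha))$, the maximum is of order $N/\Gamma(1-\alpha)$ and more precisely $\P[\max_{j\leq N} S(j)^\alpha < \epsilon N] = (1 - \P[S^\alpha > \epsilon N])^N \approx \exp(-1/(\epsilon\Gamma(1-\alpha)))$, which is small for small $\epsilon$. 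Thus $W_N = \frac{\Gamma(1-\alpha)}{N\log N}\sum_j S(j)^\alpha \geq \frac{\Gamma(1-\alpha)}{N\log N}\max_j S(j)^\alpha$, and $\P[W_N < e^{-M}] \leq \P[\max_j S(j)^\alpha < e^{-M} N\log N]$. One must be slightly careful because $e^{-M}\log N$ grows with $N$; but for the regime $e^{-M}\log N \leq 1$ (i.e.\ $M \geq \log\log N$) the bound $\exp(-c e^{M}/\log N)$ is not directly uniform, so I would instead split: for fixed $M$ and $N$ large the probability that $\max_j S(j)^\alpha < e^{-M}N\log N$ is at most $\exp(-c\, e^{M})$ for $N$ large enough that $e^{-M}\log N \geq 1$, handling small $N$ by finiteness of each individual $\E[(\log W_N)_-]$ (which holds since $W_N > 0$ a.s.\ and $\log W_N$ has a finite negative moment by a union bound on the $S(j)$'s). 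Integrating the tail bound $\P[(\log W_N)_- > m] \leq \exp(-c e^{m})$ over $m > M$ gives a bound tending to $0$ as $M\to\infty$, uniformly in $N$ large.

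The main obstacle I expect is precisely this interplay in the left-tail estimate between the truncation level $M$ in the uniform integrability and the slowly growing factor $\log N$: one cannot get a single clean Gaussian-type concentration bound because $S^\alpha$ has infinite mean, so the estimate has to be organized by first fixing $M$, then letting $N\to\infty$, and carefully checking that the finitely many small-$N$ terms cause no trouble. Once uniform integrability is in hand, the conclusion is immediate: $\E[\log W_N]\to 0$, and since $\log W_N = \log\frac{\sum_j S(j)^\alpha}{N\log N} + \log\Gamma(1-\alpha)$, this gives $\lim_N \E[\log\frac{\sum_j S(j)^\alpha}{N\log N}] = -\log\Gamma(1-\alpha)$, as desired. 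An alternative route, avoiding some of this, would be to prove $L^1$ convergence of $W_N$ to $1$ directly together with a matching lower bound via Jensen on $\log$, but the two-sided uniform integrability argument above seems the most robust.
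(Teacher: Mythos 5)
Your overall strategy (uniform integrability of $\log W_N$, $W_N=\frac{\Gamma(1-\alpha)}{N\log N}\sum_j S(j)^\alpha$, to upgrade the convergence in probability to convergence of expectations) is exactly the paper's, and your treatment of the positive part is essentially the paper's first step (truncate, Markov/Chebyshev), modulo a quantitative slip: with truncation at $a_N=N/\Gamma(1-\alpha)$ the event that some term exceeds $a_N$ has probability of order one, and a single big jump makes $\P[W_N>2]$ of order $1/\log N$, not $1/(\log N)^2$; what you really need, and what the paper proves with a $u$-dependent truncation, is a tail bound of the type $\P[W_N>u]\leq C(1+\log u)/u$ uniformly in $N$, which does give the required control of $\E[(\log W_N)_+\mathbf{1}_{(\log W_N)_+>M}]$.

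The genuine gap is in your left-tail estimate. Bounding $\sum_j S(j)^\alpha$ below by $\max_j S(j)^\alpha$ cannot work here, because the maximum is of order $N$ while the sum is of order $N\log N$: for fixed $M$, as $N\to\infty$ one has
$\P\big[\max_j S(j)^\alpha < e^{-M}N\log N\big]\approx \exp\big(-\tfrac{e^{M}}{\Gamma(1-\alpha)\log N}\big)\to 1$,
so in the regime $e^{-M}\log N\geq 1$ this probability is bounded \emph{below} by a constant, not above by $\exp(-c\,e^{M})$ as you claim — the dependence is inverted, and the bound is vacuous precisely in the large-$N$ regime you need. The lower tail of $W_N$ is governed by the aggregate of many moderate terms, not by the maximum, so one must use the whole sum. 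The paper does this through a uniform bound on the negative moment: by Fubini, $\E\big[(\Sigma_N/(N\log N))^{-1}\big]=\int_0^\infty g\big(t/(N\log N)\big)^N\,dt$ with $g(t)=\E\exp\{-t(\Sa)^\alpha\}$, and the estimate $g(t)\leq 1-C_1t|\log t|$ for small $t$ (from the tail of $\Sa$), together with bounds on the intermediate and large-$t$ ranges, gives $\sup_N\E[W_N^{-1}]<\infty$, hence $\P[W_N<e^{-M}]\leq Ce^{-M}$ uniformly in $N$ and the needed uniform integrability of $(\log W_N)_-$. A Chernoff bound on the lower tail using the same Laplace-transform estimate would also do; your maximum-based argument, as written, does not.
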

\begin{proof} 
	To derive convergence of moments from the convergence in probability above, we prove uniform integrability, following arguments of \cite[proof of Prop. 2.8]{CN84}. It suffices to show that
 \begin{equation} \label{eq:d2-1}
 \sup_N \E  \big( \Sigma_N  /(N \log N)\big)^a < \8 
\end{equation}
 both for some for $a>0$ and some $a<0$, where $\Sigma_N = \sum_{i=1}^N S(i)^{\alpha}$.
 
 For the first purpose, let $\hat \Sigma_N = \sum_{i=1}^N  S_i^{\alpha} {\mathbf 1}_{\{S_i^{\alpha}  \leq Nu\}} $ and note that 
 $$\P( \Sigma_N  \neq \hat \Sigma_N ) \leq N \P(  S_i^{\alpha}  > Nu) \sim \Gamma(\alpha)\frac{\sin (\pi \alpha)}{\pi u }$$
  as $N\to\infty$. Use Markov inequality and bound, for $u>0$,
 \begin{eqnarray*}
\P( \Sigma_N  /(N \log N) > u) &\leq& u^{-1} \E \big( \hat \Sigma_N  /(N \log N) \big) + \P ( \Sigma_N  \neq \hat \Sigma_N ) \\
&=& {\mathcal O} ( u^{-1} \log u ),
\end{eqnarray*}
which implies \eqref{eq:d2-1} for $a \in (0,1)$. 

We now treat the case $a=-1$: write by Fubini's theorem
\begin{equation}
\nn
\E \Big[\big( \Sigma_N  /(N \log N)\big)^{-1} \Big]=  \int_0^\8    \E e^{-t \Sigma_N  /(N \log N) }    dt
=\int_0^\8 g\big( t  /(N \log N)\big)^N dt,
\end{equation}
where
\begin{equation}  \label{d2-2} 
 \qquad g(t) = \E \exp\{-t (\Sa)^\alpha\} 
 \leq
 \left\{
 \begin{array}{cc}
 1-C_1t| \log t| & t\in (0,1/2],   \\
 C_2 &   t \in [1/2, \log N],   \\
  t^{-\gamma},   & t > \log N,  
\end{array}
\right.
\end{equation}
for some $C_1>0, C_2 \in (0,1)$ and $\gamma>0$. The bound for $t \in (0,1/2)$ follows from the explicit rate of decay of the tails of $\Sa$, whereas the one for $t>2$ follows from the fact that
the density of $\Sa$ at $s$ is ${\mathcal O}(s)$. Plugging the bounds (\ref{d2-2}) into the above integral, we get (\ref{eq:d2-1}) for $a=-1$.
\end{proof}

This ends the proof of Proposition \ref{thm:main-stable}
and yields $\alpha v_N \sim \log N + \log \log N - \log \Gamma(1-\alpha)$. 
\vspace{2ex}

As for the variance, first note that, by \cite[Lemma 2.6]{CN84} (or even the arguments in the proof above), $\log \sum^N_{j=1} S(j)^{\alpha} $ has finite variance. Now, with $\Sigma_N$ as in the proof of the previous lemma,
\begin{eqnarray}
	\log\left( \frac{\Sigma_N}{c_{\alpha}N\log N}\right)
	= \log\left( 1+\frac{1}{\log N} \mathcal{S}_N\right) = F_N(\S_N),
\end{eqnarray}
where $\S_N$ is defined in \eqref{eq:renormstable}, $c_\a= 1/\Gamma(1-\a)$  and $F_N(x) = \log(1+\frac{x}{\log N})$. Let $L>0$ and observe that the functions $\log N \times F^2_N(x)$ are uniformly bounded and uniformly continuous on $\{x \leq L \sqrt{\log N}\}$. Assume that, on our probability space, $\S_N \to \S$ almost surely. Then,
\begin{eqnarray}
	\log N \, \E\left[ F^2_N(\S_N) {\bf 1}_{|S_N|\leq L \sqrt{\log N}} \right]
	&\sim& \log N \, \E\left[ F^2_N(\S) {\bf 1}_{|S_N|\leq L \sqrt{\log N}} \right]\\
	&\sim& \log N \int^{\infty}_0 F^2_N(y)\, \frac{dy}{y^2}.
\end{eqnarray}
By a simple change of variable, the last quantity equals $\int^{\infty}_0 \log^2(1+y)\, \frac{dy}{y^2}=\frac{\pi^2}{3}$.

%%%%%%%%

%%%%%%%

%%%%%%%%%%%%%%%%%%%%%%%%%%%%%%%%%%%%

\subsection{Scaling of the polymer height}\label{sec:poly-height}

 \begin{proof} Theorem \ref{thm:main-front}.
In this section, we proceed as in \cite{CQR}, Theorem 3.2.   
 
We first expand $\Upsilon_N$ from \eqref{eq:accr-stable}. Recall that
\begin{eqnarray*}
	\Upsilon_N = \frac{1}{\alpha} 	\log \sum^N_{j=1}S^{\alpha}_j,
\end{eqnarray*}
for $S_1,\cdots,S_N$ independent $\Sa$-distributed random variables.
From \eqref{eq:renormstable}, we get
\begin{eqnarray*}
	\sum^N_{j=1}S^{\alpha}_j
	=\frac{N\log N}{\Gamma(1-\alpha)}\left\{ 1 + \frac{\mathcal{S}_N}{\log N}\right\}
\end{eqnarray*}
with $\mathcal{S}_N$ converging to $\S$, the totally asymmetric stable law of exponent $1$ given by \eqref{def:Stable1} . Hence,
\begin{equation}
\nn 
\Upsilon_N = \frac{1}{\alpha} \log \left( \frac{N \log N}{\Gamma(1-\alpha)}\right) + \frac{  \S_N}{\alpha \log N} + o(1/\log N) 
\end{equation}
so that, for all $t \geq 1$, 
\begin{eqnarray}
\S_N(t) := 	\alpha \log N \left\{ \Upsilon_N(t) - \log \beta_N\right\} \to \S,
\end{eqnarray}
with $\beta_N = \left( \frac{N \log N}{\Gamma(1-\alpha)}\right)^{1/\alpha}$. This gives the scaling limit of the jumps of the walk $\phi_N$. 
To get the scaling limit for the walk itself,  we use basic convergence theorems  of independent increments processes. 
We apply Theorems 3.2 in \cite{JacodSF}, taking $Y_m^n, \gamma^n(t)$ in formula (3.1) in that paper equal $\S_N(m), k_Nt$ respectively. We derive the desired convergence \eqref{eq:ouf} after a few manipulations.  
\end{proof}

%%%%%%%%%%%%%%%%%%%%%%%%%%%%%%%%%%%%
%%%%%%%%%%%%%%%%%%%%%%%%%%%%%%%%%%%%

\subsection{Asymptotics of the invariant measure}\label{sec:inv-measure}

Recall the definition
\begin{eqnarray*}
	X_{N,\alpha}(t) = \frac{Z_N(t)}{||Z_N(t)||_{\alpha}}= \frac{Z_N(t)}{\left( \sum_j Z_N(t,j)^{\alpha}\right)^{1/\alpha}}.
\end{eqnarray*}
Letting $S_N(t,j):=\frac{Z_N(t,j)}{||Z_N(t-1)||_{\alpha}}$ and $S_N(t)=(S_N(t,1),\cdots,S_N(t,N))$, we get
\begin{eqnarray*}
	X_{N,\alpha}(t) = \frac{S_N(t)}{\left( \sum_j S_N(t,j)^{\alpha}\right)^{1/\alpha}},
\end{eqnarray*}
where we recall that $\{S_N(t,j):\, t\geq 1,\, 1\leq j \leq N\}$ is an i.i.d. family of $\Sa$-distributed random variables.

We borrow a (basic) version of Theorem 5.7.1 from \cite{LLR}:
\begin{lem}
	Let $(X_i)_i$ be i.i.d. random variables with common distribution function $F$ and let $u_n=u_n(\tau)$ be such that
	\begin{eqnarray}
		n [1-F(u_n(\tau))] \to \tau.
	\end{eqnarray}
	Then, the point process 
	\begin{eqnarray}
			\sum^n_{i=1} \delta_{u_n^{-1}(X_i)}
	\end{eqnarray}
	converges weakly to the Poisson point process (PPP) on $\R_+$ with intensity measure the Lebesgue measure.
\end{lem}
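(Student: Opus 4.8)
The plan is to verify the standard criterion for convergence to a Poisson point process, reducing everything to the elementary Poisson limit theorem for sums of independent indicators. Write $N_n=\sum_{i=1}^n \delta_{u_n^{-1}(X_i)}$. The defining relation $n[1-F(u_n(\tau))]\to\tau$ forces $\tau\mapsto u_n(\tau)$ to be (eventually) non-increasing, so that for $n$ large the generalized inverse $u_n^{-1}$ is well defined on an interval exhausting $\R_+$, and for $0\le a<b<\8$ one has
\[
N_n\big((a,b]\big)=\#\{i\le n:\ u_n(b)\le X_i<u_n(a)\},
\]
which is a $\mathrm{Binomial}(n,p_n)$ variable with $p_n=F\big(u_n(a)^-\big)-F\big(u_n(b)^-\big)$. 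From the hypothesis, $np_n=n\big[1-F(u_n(b)^-)\big]-n\big[1-F(u_n(a)^-)\big]\to b-a$; in particular $\E N_n\big((a,b]\big)\to b-a$, which is the intensity of the candidate limit on $(a,b]$.

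First I would handle a single bounded interval $I=(a,b]\subset\R_+$. Since $N_n(I)$ is $\mathrm{Binomial}(n,p_n)$ with $np_n\to b-a$, the law of rare events gives $N_n(I)\cvlaw\mathrm{Poisson}(b-a)$; explicitly, $\E\, s^{N_n(I)}=\big(1-p_n(1-s)\big)^n\to\exp\{-(b-a)(1-s)\}$ for every $s\in[0,1]$. Next, for pairwise disjoint intervals $I_1,\dots,I_k\subset\R_+$, the vector $\big(N_n(I_1),\dots,N_n(I_k)\big)$ has a multinomial law with cell probabilities $p_n^{(1)},\dots,p_n^{(k)}$ and residual probability $1-\sum_j p_n^{(j)}$, where $np_n^{(j)}\to\mathrm{Leb}(I_j)$ by the computation above. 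The multivariate law of rare events then yields joint convergence to a vector of \emph{independent} Poisson variables with means $\mathrm{Leb}(I_j)$ --- precisely the finite-dimensional distributions of the Poisson point process on $\R_+$ with Lebesgue intensity, evaluated on disjoint intervals.

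To upgrade these finite-dimensional statements to weak convergence of the point processes themselves, I would invoke Kallenberg's theorem: since the candidate limit $N$ is a simple point process, it suffices to check that $\E N_n(B)\to\E N(B)=\mathrm{Leb}(B)$ and $\P[N_n(B)=0]\to\P[N(B)=0]=e^{-\mathrm{Leb}(B)}$ for every $B$ in a dissecting ring generating the Borel $\sigma$-field of $\R_+$ --- here, finite disjoint unions of bounded half-open intervals. The first requirement was established above; for $B=\bigcup_{j\le k} I_j$ (disjoint) the multinomial structure gives $\P[N_n(B)=0]=\big(1-\sum_{j}p_n^{(j)}\big)^n\to\exp\{-\sum_j\mathrm{Leb}(I_j)\}=e^{-\mathrm{Leb}(B)}$, as needed. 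This yields the asserted weak convergence.

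I expect the only genuinely delicate point to be the bookkeeping around the generalized inverse $u_n^{-1}$: establishing that $u_n$ is eventually monotone, and handling possible atoms of $F$ near the points $u_n(\tau)$ so that $F(u_n(\tau)^-)$ may be replaced by $F(u_n(\tau))$ in the limit and the cell probabilities of consecutive intervals telescope correctly. These are exactly the regularity issues subsumed in the standing hypotheses of \cite[Theorem 5.7.1]{LLR}; granting them, the remainder is the textbook Poisson limit theorem combined with Kallenberg's void-probability criterion.
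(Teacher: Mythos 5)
The paper gives no proof of this lemma at all: it merely cites it as ``a (basic) version of Theorem~5.7.1 from [LLR].'' Your argument supplies the proof that the citation suppresses, and it is the standard one: the cell counts $N_n(I_j)$ over disjoint bounded intervals form a multinomial vector with $n\,p_n^{(j)}\to\mathrm{Leb}(I_j)$, the multivariate law of rare events gives independent Poisson limits, and Kallenberg's criterion for simple limit processes (convergence of means and of void probabilities on a dissecting ring of finite unions of bounded intervals) upgrades the finite-dimensional convergence to weak convergence of the point processes. This is correct and is essentially what the cited source does.

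One small caveat, which you partly flag yourself: the hypothesis $n[1-F(u_n(\tau))]\to\tau$ does \emph{not} by itself force $\tau\mapsto u_n(\tau)$ to be monotone; it is an implicit normalization on the choice of the levels $u_n(\tau)$ (and is part of the standing assumptions in [LLR]). What the hypothesis does give you is that for any finite collection $\tau_1<\dots<\tau_k$ one has $F(u_n(\tau_1))>\dots>F(u_n(\tau_k))$, hence $u_n(\tau_1)>\dots>u_n(\tau_k)$, for all $n$ large enough; since your argument only ever uses finitely many intervals at a time, this is all you need for the telescoping of the cell probabilities. Likewise the replacement of $F(u_n(\tau)^-)$ by $F(u_n(\tau))$ requires a no-atoms-type condition on $F$ near the $u_n(\tau)$'s, which again is part of the cited theorem's hypotheses. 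In the application in the paper $F$ is the continuous distribution function of a (power of a) stable random variable and $u_n(\tau)=c^{1/\alpha}(n/\tau)^{1/\alpha}$ is explicit, strictly decreasing and continuous, so all these regularity issues disappear.
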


In our setting, $1-F(u)\sim cu^{-\alpha}$ as $u \to +\8$ with $c=\Gamma(1-\alpha)^{-1}$ so that we can choose 
\begin{eqnarray}
	u_n(\tau)=c^{1/\alpha}n^{1/\alpha}\tau^{-1/\alpha},\qquad u_n^{-1}(x)=cnx^{-\alpha}.
\end{eqnarray}
Let $(S_i)_i$ be an i.i.d. family of $\Sa$-distributed random variables. Then, according to the above result, the point process
\begin{eqnarray}
	\mathcal{P}_N:=\sum^N_{i=1}\delta_{cNS_i^{-\alpha}}
\end{eqnarray}
converges weakly to a Poisson point process $(\tau_i)_i$ with intensity $d\tau $ on $\R_+$.

A more familiar formulation is that $\{\frac{S_i}{N^{1/\alpha}}\}$ converges to a PPP with intensity proportional to $s^{-(1+\alpha)}$. Accordingly, it is tempting to rewrite
\begin{eqnarray}
	X_{N,\alpha}^\8(t) = \frac{N^{-1/\alpha}S_N(t)}{\left( \sum^N_{j=1} \left( N^{-1/\alpha}S_N(t,j)\right)^{\alpha}\right)^{1/\alpha}},
\end{eqnarray}
but the denominator  diverges, as
\begin{eqnarray}
	 \frac{\sum^N_{j=1}S_N(t,j)^\alpha}{N \log N} \to \frac{1}{\Gamma(1-\alpha)},
\end{eqnarray}
so that it is not unreasonable to expect
\begin{eqnarray}
 (\log N)^{1/\alpha} \times X_{N,\alpha}(t) = \frac{N^{-1/\alpha}S_N(t)}{\left(\frac{\sum^N_{j=1}S_N(t,j)^\alpha}{N \log N} \right)^{1/\alpha}}
\end{eqnarray}
to converge in some sense. Since the limit would have infinitely many points in the neighborhood of 0, we perform a non-linear transformation on the point process, and consider $\sigma_i= - \log \tau_i$.
\begin{prop} Assume $\omega_{i,j} \sim \Sa$. We have the convergence of the point process
\begin{equation} \label{eq:cv-pp}
\sum_{i=1}^N 	\delta_{ \alpha \log X_{N,\alpha}^\8(i)+ \log_2 N} \cvlaw
\sum_{i\geq 1} \delta_{\sigma_i}
\end{equation}
with $(\sigma_i)_i$ a Poisson point process  with intensity $e^{-\sigma} d\sigma $ on $\R$. ($\log_2=\log \log$.)
\end{prop}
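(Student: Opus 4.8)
The plan is to reduce the convergence \eqref{eq:cv-pp} to the point‑process lemma recalled just above by an explicit change of variables; the only delicate point is that the normalization defining $X_{N,\alpha}^\8(i)$ involves the random sum $\Sigma_N:=\sum_{j=1}^N S_j^\alpha$, whose fluctuations must be shown to be negligible at the $O(1)$ scale of the point process.

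\emph{Reduction and the key identity.} By Theorem \ref{thm:structure-stable}, parts 5 and 6, the vector $(X_{N,\alpha}^\8(i))_{i=1}^N$ has the same law as $(S_i/\|S_N\|_\alpha)_{i=1}^N$, where $S_N=(S_1,\dots,S_N)$ has i.i.d. $\Sa$-distributed coordinates, and replacing one by the other does not alter the law of the point process in \eqref{eq:cv-pp}. With $c=\Gamma(1-\alpha)^{-1}$, and using $\alpha\log X_{N,\alpha}^\8(i)=\alpha\log S_i-\log\Sigma_N$, a direct computation gives
$$\alpha\log X_{N,\alpha}^\8(i)+\log_2 N=-\log\big(cN\,S_i^{-\alpha}\big)+\eps_N,\qquad \eps_N:=\log_2 N-\log\Sigma_N+\log(cN),$$
where the error term $\eps_N$ is the \emph{same} for every $i$. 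By \eqref{eq:renormstable}, $\Sigma_N=cN(\log N+\S_N)$ with $\S_N\cvlaw\S$, so $\eps_N=\log_2 N-\log(\log N+\S_N)=-\log(1+\S_N/\log N)$, and since $\S_N$ is tight this shows $\eps_N\to 0$ in probability.

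\emph{Poisson convergence after a continuous transformation.} As recalled just above, the point‑process lemma gives $\mathcal P_N=\sum_{i=1}^N\delta_{cN S_i^{-\alpha}}\cvlaw\sum_i\delta_{\tau_i}$, a Poisson point process with intensity $d\tau$ on $\R_+$. The map $h:\tau\mapsto-\log\tau$ is a homeomorphism of $(0,\infty)$ onto $\R$ under which preimages of compact sets are compact, so the induced push‑forward on locally finite point measures is continuous for the vague topology; by the continuous mapping theorem $\mathcal P_N':=\sum_{i=1}^N\delta_{-\log(cN S_i^{-\alpha})}\cvlaw\sum_i\delta_{\sigma_i}$ with $\sigma_i=-\log\tau_i$, i.e., a Poisson point process with intensity $e^{-\sigma}\,d\sigma$ on $\R$.

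\emph{Removing the common shift.} By the key identity, the point process on the left‑hand side of \eqref{eq:cv-pp} equals the translate of $\mathcal P_N'$ by the common random amount $\eps_N$. Since $\eps_N\to 0$ in probability and $\mathcal P_N'$ converges in law, the pair $(\mathcal P_N',\eps_N)$ converges jointly in law to $(\sum_i\delta_{\sigma_i},0)$ (the second coordinate being constant in the limit); as the translation map $(\mu,\eps)\mapsto\mu(\,\cdot\,-\eps)$ is jointly continuous at every pair of the form $(\mu,0)$ for the vague topology --- a consequence of the uniform continuity of compactly supported test functions together with the uniform local boundedness of a vaguely convergent sequence of measures --- the continuous mapping theorem yields \eqref{eq:cv-pp}. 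The step demanding genuine care is the identification of $\eps_N$ and the bound $\eps_N=o_\P(1)$: this is precisely where \eqref{eq:renormstable} enters, for a priori $\log\Sigma_N$ fluctuates, and only the index‑$1$ stable limit theorem for $\Sigma_N$ guarantees that these fluctuations are of order $1/\log N$, hence invisible at the scale of the point process.
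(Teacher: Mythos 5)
Your argument is correct and follows the same route the paper sketches in Section \ref{sec:inv-measure}: reduce to i.i.d. $\Sa$ variables via Theorem \ref{thm:structure-stable} (parts 5--6), apply the extreme-value lemma to get Poisson convergence of $\sum_i\delta_{cNS_i^{-\alpha}}$, push forward by $\tau\mapsto-\log\tau$, and absorb the common shift coming from $\log\Sigma_N$ using \eqref{eq:renormstable}. Your write-up merely makes explicit the details (the identity for $\eps_N$, continuity of the log pushforward, and the Slutsky-type removal of the shift) that the paper leaves implicit, so there is nothing to correct.
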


We make a comment on localization: Directed polymers on the lattice in strong disorder regime have macroscopic atoms, in the sense that 
the favorite sites at the ending time have mass bounded away from 0  
\cite{CSY, Bates-Chatterjee,  Bates}. Here, the largest mass vanishes at order $(\log N)^{-1/\alpha}$.
Localization is thus rather weak in our model.

%%%%%%%%%%%%%%%%%%
%%%%%%%%%%%%%%%%%%%%%%%%%%%%%%%%%%%%
%%%%%%%%%%%%%%%%%%%%%%%%%%%%%%%%%%%%
%%%%%%%%%%%%%%%%%%%%%%%%%%%%%%%%%%%%

\section{Perturbative results}

As a preliminary, we show an additional result  when the environment is distributed as the stable law.

\subsection{Wave front in the $\alpha$-stable case}  \label{sec:frontSa}

Recall $U_N(t, x), u_{\alpha}$ and $\phi_N$ from \eqref{def:front}, \eqref{def:ualpha} and
\eqref{def:Phi_N} respectively.

\begin{prop} \label{prop:frontprofilestable}
	Assume $\omega_{i,j} \sim \Sa$, and fix $t\geq1$ arbitrary.	
	\begin{enumerate}
		\item \label{eq:pert1} Conditionally on $\mathcal{F}_t$, we have a.s., as $N \to \8$, 
			$$U_N\big(t, x + \phi_N(t-1)\big) \to u_{\alpha}(x),$$
			uniformly in $x$.
			
		\item  \label{eq:pert2} As $N\to \infty$,
			$$%c_{\alpha}
			 \log N \times \Big[ U_N\big(t, x + (t-1)\, v_N+\phi_N(0)\big)- u_\alpha(x)\Big] \cvlaw u_{\alpha}'(x) \mathcal{Z}.$$
			where $\mathcal{Z}$ is distributed as a sum of $t-1$ independent $\mathcal{S}$ random variables
			(see definition   \eqref{def:Stable1})
			i.e., equal in law to $(t-1) \mathcal{S}$ up to a shift.
	\end{enumerate}
\end{prop}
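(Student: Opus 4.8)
The plan is to exploit the exact product structure from Theorem \ref{thm:structure-stable} together with the point-process / fluctuation results already established in Section \ref{sec:inv-measure} and Section \ref{sec:asymptLyap}. The key observation is that, by \eqref{eq:logdecomposition}, we have $\log Z_N(t,j) = \log S_N(t,j) + \phi_N(t-1)$ where the $S_N(t,j)$, $1 \le j \le N$, are i.i.d.\ $\Sa$-distributed and (conditionally on $\mathcal{F}_t$) independent of $\phi_N(t-1)$. Hence
$$
U_N\big(t, x + \phi_N(t-1)\big) = \frac{1}{N}\sum_{j=1}^N {\bf 1}_{\log S_N(t,j) > x},
$$
which, conditionally on $\mathcal{F}_t$, is just the empirical (inverse) distribution function of an i.i.d.\ $N$-sample from the law of $\log \Sa$. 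Part (1) is then the Glivenko--Cantelli theorem applied to that sample: the empirical distribution function converges a.s.\ and uniformly to $u_\alpha$. (The randomness in $\phi_N(t-1)$ plays no role here, which is precisely why centering at $\phi_N(t-1)$ rather than at $v_N(t-1)$ is the natural thing to do.)

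For part (2), the centering is changed from the random quantity $\phi_N(t-1)$ to the deterministic $\phi_N(0) + (t-1)v_N$, and the discrepancy is
$$
\phi_N(t-1) - \phi_N(0) - (t-1) v_N = \sum_{s=1}^{t-1}\big(\Upsilon_N(s) - v_N\big).
$$
Since $t$ is fixed and each $\Upsilon_N(s) - v_N$, after multiplication by $\alpha \log N$, converges in law to $\mathcal S$ by the computation in Section \ref{sec:poly-height} (i.e.\ $\alpha \log N\,(\Upsilon_N - \log\beta_N) \to \mathcal S$, and $\alpha\log N\,(v_N - \log\beta_N) \to$ the mean-correction constant already absorbed), the sum of these $t-1$ independent terms satisfies
$$
\log N \times \big(\phi_N(t-1) - \phi_N(0) - (t-1)v_N\big) \cvlaw \frac{1}{\alpha}\big(\mathcal S_1 + \cdots + \mathcal S_{t-1}\big) =: \mathcal Z,
$$
with the $\mathcal S_i$ i.i.d.\ copies of $\mathcal S$ (I would be careful here about the exact normalization so that the stated $\mathcal Z \eqlaw (t-1)\mathcal S$ up to a shift comes out right, using that $\beta_N$ drops out in the difference). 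Writing $\delta_N := \phi_N(t-1) - \phi_N(0) - (t-1)v_N$, we have
$$
U_N\big(t, x + \phi_N(0) + (t-1)v_N\big) = U_N\big(t, (x - \delta_N) + \phi_N(t-1)\big),
$$
so a Taylor expansion around $u_\alpha$, using part (1) for the leading order and smoothness of $u_\alpha$ (the density of $\Sa$ is smooth and positive on $\R_+$, so $u_\alpha \in C^1$ with derivative $u_\alpha'$), gives
$$
\log N \times \Big[U_N\big(t, x + \phi_N(0) + (t-1)v_N\big) - u_\alpha(x)\Big] \approx -\,u_\alpha'(x)\cdot \log N\cdot\delta_N + (\text{empirical fluctuation}).
$$

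The main obstacle is controlling the two error terms in that last display simultaneously and uniformly enough to pass to the joint limit. First, the empirical fluctuation $U_N(t, x+\phi_N(t-1)) - u_\alpha(x)$ is of order $N^{-1/2}$ (a Donsker-type bound), hence negligible after multiplication by $\log N$; this needs a uniform-in-$x$ argument. Second, one must upgrade the pointwise Taylor expansion to something uniform in $x$ and valid with the random shift $\delta_N$: since $\log N \cdot \delta_N$ is tight (it converges in law) and $\delta_N \to 0$ a.s.\ along the relevant scale, one localizes on the event $\{|\delta_N| \le C/\log N\}$ and uses uniform continuity of $u_\alpha'$ there, while handling the low-probability complement separately. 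Combining these, the empirical part washes out, only the $-u_\alpha'(x)\,\log N\,\delta_N$ term survives, and its limit is $u_\alpha'(x)\,\mathcal Z$ (the sign and the identification of $\mathcal Z$ as $-\mathcal Z$'s law being immaterial since $\mathcal S$ and its shifts are not symmetric — one must simply track which totally asymmetric Lévy variable appears, matching the statement). A clean way to organize the joint convergence is to argue that $\big(U_N(t,\cdot+\phi_N(t-1)) - u_\alpha, \log N\cdot\delta_N\big)$ converges jointly — the first coordinate to $0$ by part (1), the second to $\mathcal Z$ — and then apply the continuous mapping theorem to the map $(g, z)\mapsto \log N[\,g(\cdot - z/\log N) + \ldots]$, though making this rigorous is essentially the content of the two obstacles above.
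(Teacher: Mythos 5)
Your proposal is correct and follows essentially the same route as the paper: part (1) is the paper's LLN-plus-Dini argument in the guise of Glivenko--Cantelli, and for part (2) both you and the paper decompose the deterministic recentering error as $\delta_N = \sum_{s=1}^{t-1}(\Upsilon_N(s)-v_N)$, show $\log N\cdot\delta_N$ has a stable limit, and then Taylor-expand $u_\alpha$ around $x$ while checking that the conditional binomial fluctuations (order $N^{-1/2}$) are crushed by the $\log N$ scale. You correctly flag the two delicate points — uniform control of the Taylor remainder against the random shift, and careful bookkeeping of the $\alpha$ factors and sign in identifying $\mathcal Z$ — which is exactly where the paper's own write-up is also terse.
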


\begin{proof}
	Let 
	$$X_N(t,j) \stackrel{\eqref{eq:logdecomposition}}{\equiv} \log S_N(t,j) := \log Z_N(t,j)-\phi_N(t-1)$$ 
	Recall from Theorem \ref{thm:structure-stable} that the random variables $\{X_N(t,j):\, t\geq 1,\, 1\leq j \leq N\}$ are i.i.d. with common law $\log \Sa$. Hence, conditionally on $\mathcal{F}_{t-1}$, $U_N(t, x + \phi_N(t-1))$ is a sum of independent Bernoulli random variables with parameter $u_{\alpha}(x)$. Pointwise convergence in item \eqref{eq:pert1} then follows from the law of large numbers. Further,  pointwise convergence of monotone functions to a continuous limit is uniform on compact by Dini's theorem; in fact, the limit is even uniform on $\R$ because the functions are bounded. All this 	yields  item \eqref{eq:pert1}.

	To prove item \eqref{eq:pert2}, first observe that
	$$U_N(t, x + (t-1)\, v_N+\phi_N(0)) 
	= \sum^N_{j=1} {\bf 1}_{ \{ X_N(t,j) > x - [\phi_N(t-1)-\phi_N(0)-(t-1)v_N] \} }$$
	is, conditionally on ${\mathcal F}_{t-1}$,  a binomial r.v..
	By \eqref{def:Phi_N} and \eqref{eq:accr-stable},
	\begin{eqnarray*}
		\alpha [\phi_N(t-1)-\phi_N(0)-(t-1)v_N]
		&=& \alpha \sum^{t-1}_{l=1} \left[ \Upsilon_N(l) - v_N\right]\\
		&=& \sum^{t-1}_{l=1} \left[ \log \| S_N(l)\|^{\alpha}_{\alpha} - \alpha v_N\right]\\
	\end{eqnarray*}
Since the i.i.d. variables $S(l,j)^\alpha$  belong to the domain of attraction of $\mathcal S$, 	
	we can write 
	$$ \|S_N(l) \|_\alpha^\alpha =\sum^N_{j=1}S(l,j)^{\alpha}=: c_{\alpha}N \log N+c_{\alpha} N \S_N(l)$$
	where the variable  $\S_N(l)$ defined by this equality is such that  $\S_N(l)\cvlaw \mathcal{S}$ as $N \to \8$ (see \eqref{eq:renormstable}). 
	Since $e^{\alpha v_N}=c_{\alpha}N \log N(1+o(1))$ by \eqref{eq:val-v_N}, we have
	\begin{eqnarray*}
		\alpha\big[\phi_N(t)-\phi_N(0)-tv_N\big]
		&=& \sum^t_{l=1} \left[ \log\left( 1+ \frac{1}{ \log N} \S_N(l) \right)+o(1) \right].
	\end{eqnarray*}
From this, we deduce that $\alpha \log N \times [\phi_N(t-1)-\phi_N(0)-(t-1)v_N] $
	converges in law to the sum of $(t-1)$ independent $\mathcal{S}$ random variables.

	Moreover, for any sequence $z_N\to 0$, the central limit theorem for binomial variables implies,
	$$N^{1/2} \times \left[ 
			\frac{1}{N}\sum^N_{j=1} {\bf 1}_{ \{ \alpha X_N(t,j) > \alpha x - z_N \} }
			- u_{\alpha}(x-z_N)
			\right]
			\cvlaw \mathcal{N}(0,u_{\alpha}(x) [1-u_{\alpha}(x)]).$$
	Together with a suitable Taylor expansion, we see that the Gaussian fluctuations vanish at the relevant scale,
$$ \log N \times \left[ 
			\frac{1}{N}\sum^N_{j=1} {\bf 1}_{ \{ \alpha X_N(t,j) > \alpha x - z_N \} }
			- u_{\alpha}(x) + u'_{\alpha}(x)\cdot z_N
			\right]
			\to 0.$$
	Taking  $z_N = \alpha [\phi_N(t-1)-\phi_N(0)-(t-1)v_N]$ and recalling the stable limit for $ z_N \log N $, we complete the proof of the 
second statement.
\end{proof}

%%%%%%%%%%%%%%%%%%%%%%%%%%%%%%%%%%%%
\subsection{Wave front for perturbations of the $\alpha$-stable case} \label{sec:proof-perturb} 
%%%%%%%%%%%%%%%%%%%%%%%%%%%%%%%%%%%%

We give the proof of Theorem \ref{th:perturb}. It is plain to see that hypothesis \eqref{eq:closetoSa} is equivalent to
\begin{equation}
\label{eq:closetoSa2}
\varphi(u) = \exp\{ - u^\alpha (1+\eps(u))\} \quad {\rm with} \quad \lim_{u \to 0^+} \eps(u)= 0 \;, 
\end{equation}
 by considering the function $\eps(u)= -u^{-\alpha} \log \varphi(u) -1$ for positive $u$. 
Define
\begin{equation} \label{eq:barZ}
	\bar{Z}_N(t,j):= \frac{Z_N(t,j)}{||Z_N(t-1)||_{\alpha}} = \sum^N_{i=1} a_{N,i}\,\omega_{ij}(t),
\end{equation}
where $a_{N,i}=\frac{Z_N(t-1,i)}{||Z_N(t-1)||_{\alpha}}$. Note that that $a_{N,i}$ are functions of $\fxi(1),\ldots \fxi(t-1)$ and that 
$\sum^N_{i=1}a_{N,i}^{\alpha}=1$. Now,
\begin{eqnarray*}
	\E[\exp\{-u \bar{Z}_N(t,j)\}|\mathcal{F}_{t-1}]
	&=&
	\prod^N_{i=1} \varphi(u a_{N,i}) \\
	&=&
	\exp\left\{ - \sum_{i=1}^N (u a_{N,i})^\alpha \big(1+\eps(u a_{N,i})\big)\right\} \qquad ({\rm by}\;  \eqref{eq:closetoSa2})\\
	&=& 
	\exp\left\{ - u^\alpha \left( 1 + \sum_{i=1}^N a_{N,i}^\alpha \eps(u a_{N,i})\right)\right\}.
\end{eqnarray*}
In the next section we will prove the following
%%%%%%%%%%%%%%%%%%%%%%%%%%%%%%%%%%%%
\begin{lem} \label{lem:aNismall}
Under the above hypothesis \eqref{eq:closetoSa},
\begin{eqnarray} \nn
	\sup_{i\leq N}\{ \|a_{N,i}\|_\8\} \equiv	\sup\{ a_{N,i}; 1 \leq i \leq N, \fxi(1),\ldots, \fxi(t-1)\}  = o(1) 
\end{eqnarray}
in probability as $N \to \8$ when $t \geq 2$.
\end{lem}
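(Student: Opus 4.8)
The plan is to reduce the claim to a statement about the vector $Z_N(t-1)$ normalized in $\alpha$-norm, and to exploit the stable-type recursion established in Theorem \ref{thm:structure-stable} together with the tail estimates for $\Sa$. Writing $a_{N,i} = Z_N(t-1,i)/\|Z_N(t-1)\|_\alpha$, we have $\sum_i a_{N,i}^\alpha = 1$ by construction, so controlling $\sup_i a_{N,i}$ amounts to showing that no single coordinate of the (suitably normalized) vector $Z_N(t-1)$ carries a non-vanishing fraction of the $\alpha$-mass. Since $t \geq 2$, we may use the one-step decomposition $Z_N(t-1,i) = S_N(t-1,i)\,\|Z_N(t-2)\|_\alpha$, where conditionally on $\mathcal F_{t-2}$ the random variables $S_N(t-1,i)$, $1 \leq i \leq N$, are i.i.d.\ with law in the domain of attraction of $\Sa$ (for $t=2$ one uses $Z_N(1,i)=\bar{Z}_N(1,i)$ directly, which is an $\alpha$-stable-like sum; in fact for $t=2$ the $a_{N,1},\dots$ are deterministic equal to $N^{-1/\alpha}$ if the environment is exactly $\Sa$, and close to that in the perturbative case). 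Hence
$$
\sup_{i\leq N} a_{N,i}^\alpha = \frac{\max_{i\leq N} S_N(t-1,i)^\alpha}{\sum_{j\leq N} S_N(t-1,j)^\alpha},
$$
and the problem becomes: the maximum of $N$ i.i.d.\ heavy-tailed summands is negligible relative to their sum.

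First I would pin down the denominator. By Proposition \ref{lem:bbb} and \eqref{eq:renormstable} (or, in the perturbative regime, by the analogous domain-of-attraction statement for sums of the $\bar{Z}$-variables, which follows from \eqref{eq:closetoSa2} and the computation preceding the lemma), we have $\sum_{j\leq N} S_N(t-1,j)^\alpha = \Theta_{\P}(N\log N)$ — more precisely $\big(\sum_j S_N(t-1,j)^\alpha\big)/(N\log N) \to 1/\Gamma(1-\alpha)$ in probability. Next I would bound the numerator: since $\P[S^\alpha > x] \sim x^{-1}/\Gamma(1-\alpha)$, a union bound gives $\P\big[\max_{i\leq N} S_N(t-1,i)^\alpha > N(\log N)^2\big] \leq N \cdot \P[S^\alpha > N(\log N)^2] = O\big(1/(\log N)^2\big) \to 0$. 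Combining, $\sup_i a_{N,i}^\alpha = O_{\P}\big(N(\log N)^2 / (N\log N)\big) = O_{\P}(\log N) $ — which is not good enough, so the crude union bound must be sharpened: instead take the threshold $N\log N \cdot \delta_N$ with $\delta_N \to \infty$ slowly, e.g. $\delta_N = \log\log N$, giving $\P[\max_i S_N^\alpha > N\log N\,\delta_N] \le N\P[S^\alpha > N\log N\,\delta_N] = O(1/(\log N\,\delta_N)) \to 0$, whence $\sup_i a_{N,i}^\alpha = O_\P(\delta_N/\log N) = o_\P(1)$. Taking $\alpha$-th roots finishes the argument.

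The main obstacle is handling the perturbative case cleanly rather than the exactly-stable one: when $\omega \sim \Sa$ the vector $Z_N(t-2)$ is independent of $\mathcal F$-increments after it, and the $S_N(t-1,i)$ are genuinely i.i.d.\ $\Sa$, so the above runs verbatim; under only \eqref{eq:closetoSa} one must instead work directly with $\bar{Z}_N(t-1,i) = \sum_k a_{N,k}^{(t-2)} \omega_{k,i}(t-1)$ and argue that, conditionally on $\mathcal F_{t-2}$, these are i.i.d.\ in $i$ with a tail $\P[\bar{Z}_N(t-1,i)^\alpha > x \mid \mathcal F_{t-2}] \sim x^{-1}/\Gamma(1-\alpha)$ uniformly enough in the (random) coefficients $a^{(t-2)}$ — this uniformity, and the fact that for $t\ge 3$ one may need to iterate the reduction or invoke an induction on $t$, is the delicate point. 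One convenient route is: prove the $t=2$ base case by the direct union-bound computation above (the coefficients at level $t=2$ being essentially deterministic), then for $t\ge 3$ observe that $a_{N,i} = \bar Z_N(t-1,i)/\|\bar Z_N(t-1)\|_\alpha$ where, conditionally on $\mathcal F_{t-2}$, the $\bar Z_N(t-1,\cdot)$ are i.i.d.\ with a regularly-varying tail of index $1$ in the $\alpha$-th power, and reapply Proposition \ref{lem:bbb} together with the maximal bound, noting that all estimates are conditional on $\mathcal F_{t-2}$ but the resulting bound $o_\P(1)$ does not depend on the conditioning.
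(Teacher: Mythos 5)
Your reduction is the same skeleton as the paper's: write $a_{N,i}=\bar Z_N(t-1,i)/\|\bar Z_N(t-1)\|_\alpha$ and show that $\max_i \bar Z_N(t-1,i)^\alpha$ is negligible compared with $\sum_j \bar Z_N(t-1,j)^\alpha$ (the paper controls the maximum via Fr\'echet/point-process convergence rather than a union bound, a cosmetic difference). The genuine gap is that your estimates are only carried out when the $\bar Z_N(t-1,i)$ are exactly (or unconditionally) $\Sa$-like; under the actual hypothesis \eqref{eq:closetoSa} you explicitly leave the uniform conditional tail estimate as ``the delicate point'', and that is precisely the content the proof has to supply. The paper closes it with no induction on $t$: conditionally on ${\mathcal F}_{t-2}$ the variables $\bar Z_N(t-1,i)$ are i.i.d.\ in $i$ with Laplace transform $\exp\{-u^\alpha(1+\sum_k a_{N,k}^\alpha\,\eps(a_{N,k}u))\}$ (coefficients one level down), and the error term is bounded by $\sup_{v\in[0,u]}|\eps(v)|$ \emph{uniformly over the conditioning}, using only $\sum_k a_{N,k}^\alpha=1$, hence $a_{N,k}\le 1$ --- no smallness of the coefficients is needed, which is what avoids circularity. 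The Tauberian theorem (Corollary 8.1.7 in \cite{BinghamGoldieTeugels}) then converts this into the tail $\P[\bar Z_N(t-1,i)\ge x\mid{\mathcal F}_{t-2}]\sim x^{-\alpha}/\Gamma(1-\alpha)$ uniformly in the conditioning, which licenses the triangular-array extreme-value step and, likewise, the divergence of the normalized sum. Your alternative plan (induction on $t$ with base case $t=2$, using $\sup_k a^{(t-2)}_{N,k}=o_\P(1)$ to control $\eps(a^{(t-2)}_{N,k}u)$ on a high-probability event) could be made to work, but as written it is a sketch, not a proof, and it is heavier than the uniform bound above.

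Two smaller slips. First, your sharpened union bound is internally inconsistent: with the threshold $N\log N\,\delta_N$ the ratio is only $O_\P(\delta_N)$, not $O_\P(\delta_N/\log N)$; you want the threshold $N\delta_N$ (probability cost $O(1/\delta_N)$), or simply note that $\max_i \bar Z_N(t-1,i)^\alpha=O_\P(N)$ while the denominator is of order $N\log N$, which is exactly the paper's comparison and gives $\sup_i a_{N,i}^\alpha=O_\P(1/\log N)$. Second, for $t=2$ the $a_{N,i}$ themselves are not deterministic (only the level-zero coefficients $N^{-1/\alpha}$ in the expansion of $\bar Z_N(1,\cdot)$ are); also, in the perturbative case the precise $N\log N/\Gamma(1-\alpha)$ asymptotics of the denominator again requires the uniform conditional tail --- though, as in the paper, mere divergence of $\sum_j\bigl(\bar Z_N(t-1,j)/N^{1/\alpha}\bigr)^\alpha$ in probability suffices for the lemma.
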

%%%%%%%%
With the lemma at hand, we finish the proof. Then, the computation above yields
$$
\E[\exp\{-u \bar{Z}_N(t,j)\}|\mathcal{F}_{t-1}] = \exp \left\{ - u^\alpha \big( 1 + o(1) \big)\right\}
\longrightarrow \exp \left\{ - u^\alpha\right\} = E[ \exp\{ - u \Sa\}]
$$
for all $u\geq 0$. This implies (e.g., \cite{FellerII} theorem 2, p.431) that $\bar{Z}_N(t,j) \cvlaw \Sa$, which is the first claim of the theorem.
Conditionally on ${\mathcal F}_{t-1}$, the variables $({Z_N(t,i)}/{||Z_N(t-1)||_{\alpha}}:\, i \in K_N)$ are i.i.d., so the second claim follows directly. 

We end by proving item (2). From the law of large numbers for triangular arrays of i.i.d. random variables, the claim follows for all real $x$. 
Then, uniformity is obtained similarly to the proof of Proposition \ref{prop:frontprofilestable}. This ends the proof of Theorem \ref{th:perturb}. \qed 
%%%%%%%%%%%%%%%
\subsection{Proof of Lemma \ref{lem:aNismall}}
We start with a simple proof under a stronger assumption, because the coupling argument there makes things transparent. In a second step, we give the proof under our general assumption.

\subsubsection{Coupling}
In this section we present the proof  under more restrictive hypothesis: We assume there exist constants $a<b$ such that
\begin{eqnarray}
	u_\alpha(x-b) \leq \P\big( \log \omega_{i,j}(t) \leq x \big) \leq u_\alpha(x-a) \;, \quad x \in \R.
\end{eqnarray}
with $u_{\alpha}$ from \eqref{def:ualpha}.
This allows to couple the environment $\omega$ at time $t-1$ with i.i.d. $\alpha$-stable random variables $s_{i,j}$ on a larger probability space in such a way that
\begin{eqnarray}
		c^{-1} s_{ij} \leq \omega_{ij}(t-1)\leq c s_{ij},\quad 1\leq i,j \leq N,
\end{eqnarray}
with $c=\max\{a^{-1}, b\}>1$. Define
	\begin{eqnarray}
		\tilde{Z}_N(t-1,j) = \sum^N_{i=1} Z_N(t-2,i) s_{ij}
	\end{eqnarray}
	Then, for some constant $C$, we have
	\begin{eqnarray}
		a_{N,i} \leq C \frac{\tilde{Z}_N(t-1,i)}{|| \tilde{Z}_N(t-1) ||_{\alpha}} = C \frac{S_i}{\left( \sum^N_{j=1} S_j^{\alpha}\right)^{1/\alpha}},
	\end{eqnarray}
	where the random variables $S_j = \frac{\tilde{Z}(t-1,j)}{||\tilde{Z}(t-1)||_{\alpha}},\, j=1,\cdots, N$ are now independent and $\Sa$-distributed. As noted above, the $S^{\alpha}_j$'s are in the domain of attraction of $\mathcal{S}$, so that the denominator is $O((N\log N)^{1/\alpha})$. Recall that the sum of the $S_i$'s is $O(N^{1/\alpha})$, we conclude that 
	$\sum^N_{i=1} a_{N,i} = O\left( \frac{1}{(\log N)^{1/\alpha}}\right)$ in probability. The estimate is uniform on ${\mathcal F}_{t-1}$. 

%%%%%%%%%%%%%%%%%%%%%%%%%%%%%%%%%%%%

\subsubsection{Proof completed}
In this section we present the proof  under  the standing hypothesis  \eqref{eq:closetoSa}.
%%%%%%%%%%%%%%%%%%%%%%%%%%%%%%%%%%%%

First, recall a well-known relation on tails of distribution and Laplace transform of a positive r.v. $V$: 
The equivalent hypothesis  \eqref{eq:closetoSa2},  $\E e^{-uV} = \exp\{ - u^\alpha (1+\eps(u))\}$ with $ \lim_{u \to 0^+} \eps(u)= 0$,
is itself equivalent,  by the Tauberian theorem (Corollary 8.1.7 in \cite{BinghamGoldieTeugels})
to
\begin{equation} \label{eq:tails}
\P[V \geq x] \sim \frac{1}{x^{\alpha}\Gamma(1-\alpha)}, \quad x \to \infty.
\end{equation}
We note that $\bar Z_{N}(t,i)$ defined in \eqref{eq:barZ},  satisfies \eqref{eq:closetoSa2} (equivalently, \eqref{eq:closetoSa}).
Indeed,
$$ \E(\exp\{ - u \bar Z_{N}(t,j) \} | {\mathcal F}_{t-1}) = \exp \big\{ - u^\alpha \big(1+ \sum_{i=1}^N a_{N,i}^\alpha  \eps(a_{N,i}u)\big) \big\},$$
where we can bound the sum uniformly by $\max\{ \eps(v); v \in [0,u]\}$, which vanishes as $u \to 0$. Then, we apply the above to 
$V= \bar Z_{N}(t-1,i)$, which obeys the tail estimate \eqref{eq:tails}. 

Thus, we can apply the theory of extreme statistics for triangular arrays of i.i.d. real variables, which implies that the 
maximum of   $\bar Z_{N}(t-1,i)$ over $i$ normalized by $N^{-1/\alpha}$
converges to a Frechet law. In complete details, we have (e.g., \cite{Bovier}, Th. 2.28)
\footnote{  
     \eqref{eq:tails} implies that the solution $v_N(\tau)$ of  $ N \P(  \bar Z_{N}(t-1,j) \geq v_N(\tau)|{\mathcal F}_{t-2}) = \tau$ is such that
           $$ v_N(\tau) \sim \frac{N}{\Gamma(1-\alpha)u^\alpha}$$
     uniformly over ${\mathcal F}_{t-2}$. This is all what we need in the i.i.d. case to apply the theorem.
}
\begin{equation}\nn
\left\{ \frac{\bar Z_N(t-1,i)}{N^{1/\alpha}}:i\leq N\right\} \cvlaw {\rm PPP\ with\ intensity\ } \frac{\alpha}{\Gamma(1-\alpha)} x^{-(1+\alpha)} {\bf 1}_{\R_+} .
\end{equation}
This point process has a finite right-most point and the sum of the $\alpha$-powers of its terms diverges. Hence,
\begin{eqnarray}
	\log a_{N,i} &\leq& 
	\max_{j\leq N} \log \frac{\bar Z_N(t-1,j)}{N^{1/\alpha}} - \frac{1}{\alpha} \log \sum_j \left( \frac{\bar Z_N(t-1,i)}{N^{1/\alpha}}\right)^{\alpha}
	\to -\infty,\quad N\to\infty.
\end{eqnarray}
This proves the lemma. \qed
%%%%%%%%%%%%%%%%%%%%%%%%%%%%%%%%%%%%

\vspace{2ex}

%%%%%%%%%%%%%%%%%%%%%%%%%%%%%%%%%%%%
%%%%%%%%%%%%%%%%%%%%%%%%%%%%%%%%%%%%    \newpage

\appendix

\section{Projective space and stochastic contractivity}

For the sake of completeness, we give a proof of Lemma \ref{th:cvdir} and expose the principal ideas leading to it.

\subsection{Contraction in the projective space} \label{sec:contraction}
When studying random matrix products, it is convenient to introduce
the projective action of these matrices. 
See, e.g., \cite{BouLa85} for invertible matrices, and \cite{Hennion97} for positive 
matrices. 
Projectively, that is, when only the directions are considered, the elements
of the positive $N$-dimensional orthant  are represented by points of the open and closed ÒpolygonsÓ
\begin{equation}
\label{def:B}
B=\{x \in (\R^*_+)^N; \|x\|_1=1\}, \qquad \bar B=\{x \in (\R_+)^N; \|x\|_1=1\}.
\end{equation}
If $g$ is a $N\times N$ matrix with (strictly) positive entries, we denote the projective action of
$g$ on $B$ by the notation $\cdot$,
$$
g \cdot x = \frac{g x}{\|g x\|_1}, 
$$
which belongs to $B$ for all $x \in \bar B$.  
The set $B$ can be equipped with a convenient metrics $d(x,y)$, which relates to the Hilbert distance (see \cite[Remark below Prop. 3.1]{Hennion97}). For $x,y \in \bar B$ let
$$
m(x,y)= \sup\{ \lambda \geq 0: \lambda y_i \leq x_i, \forall i \leq N\} \in [0,1] ,
$$
and define, with $\phi(s)=(1-s)/(1+s)$,
$$
d(x,y)= \phi\big( m(x,y)m(y,x)\big) .
$$
\begin{prop}[Sect. 10 in \cite{Hennion97}] \label{prop:distance}
The map $d: \bar B \times \bar B \to [0,1]$ defines a distance on  $\bar B$, and we have:
\medskip

(i) The topology of $(B,d)$ is the topology on $B$ induced by the usual topology on $\R^N$.
For $x \in \bar B \setminus B$ and $y \in B$, we have $m(x,y)=0$ and then $d(x,y)=1$. Also,
$$ \|x-y\|_2 \leq 2 d(x,y), \quad x,y \in \bar B.$$
Define the contraction coefficient $c(g)$ of a positive $N\times N$ matrix $g$ as
$$
c(g)= \sup\{ d(g\cdot x, g\cdot y); x, y \in \bar B\} \in [0,1] .
$$
Then,
\medskip

(ii) $\forall x, y \in \bar B, d(g\cdot x, g\cdot y) \leq c(g) d(x,y)$ ;\\

(iii) If $g$ has strictly positive entries, $c(g) <1$ ; \\

(iv) For the product of two positive    matrices $g, g',$ $c(g g') \leq c(g) c(g')$ ;\\

(v) $c(g^*)=c(g)$.
\end{prop}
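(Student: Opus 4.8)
\emph{Proof plan.} I would phrase everything through the single quantity $\mu(x,y):=m(x,y)\,m(y,x)\in[0,1]$, so that $d(x,y)=\phi(\mu(x,y))$ where $\phi$ is continuous and strictly decreasing on $[0,1]$ with $\phi(0)=1$, $\phi(1)=0$. Symmetry of $d$ is then built in. For non-degeneracy, pairing the inequality $\lambda y\le x$ with $\mathbf 1$ and using $\|x\|_1=\|y\|_1=1$ shows $m(x,y)\le1$, so $\mu(x,y)=1$ forces $m(x,y)=m(y,x)=1$, i.e. $y\le x$ and $x\le y$ coordinatewise, i.e. $x=y$. For the triangle inequality I would first prove the multiplicative version $\mu(x,z)\ge\mu(x,y)\mu(y,z)$: if $\lambda y\le x$ and $\nu z\le y$ then $\lambda\nu z\le x$, hence $m(x,z)\ge m(x,y)m(y,z)$, and combining with the same bound for the reversed pair gives the claim. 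Then, since $\phi$ is decreasing and satisfies the elementary inequality $\phi(st)\le\phi(s)+\phi(t)$ for $s,t\in[0,1]$ — which, after clearing denominators, is exactly $(1-s)(1-t)\ge0$ — we get $d(x,z)=\phi(\mu(x,z))\le\phi(\mu(x,y)\mu(y,z))\le\phi(\mu(x,y))+\phi(\mu(y,z))=d(x,y)+d(y,z)$.

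For part (i): if $x\in\bar B\setminus B$ there is an index $k$ with $x_k=0<y_k$, and the constraint at $k$ forces $\lambda=0$ in $m(x,y)=\sup\{\lambda\ge0:\lambda y\le x\}$, so $m(x,y)=0$, $\mu(x,y)=0$, $d(x,y)=\phi(0)=1$. For the norm estimate, write $a=m(x,y)$, $b=m(y,x)$; then $ay_i\le x_i$ and $bx_i\le y_i$, hence $-(1-a)y_i\le x_i-y_i\le(1-b)x_i$, so $|x_i-y_i|\le(1-a)y_i+(1-b)x_i$; summing over $i$ and using $\|x\|_1=\|y\|_1=1$ gives $\|x-y\|_2\le\|x-y\|_1\le(1-a)+(1-b)$, and the elementary bound $(1-a)+(1-b)\le2\phi(ab)=2d(x,y)$ follows (it reduces, via $a+b\ge2\sqrt{ab}$, to $(1-\sqrt{ab})^2\ge0$). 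The norm estimate shows the identity $(B,d)\to(B,\|\cdot\|_2)$ is continuous; conversely, on $B$ all coordinates are bounded below by a positive constant, so norm convergence makes all ratios $x_i/y_i\to1$, hence $\mu\to1$ and $d\to0$ — so the two topologies agree on $B$.

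For parts (iii) and (v), and for (iv) once (ii) is known, I would route everything through the projective diameter of the image, $\Delta(g):=\sup_{x,y\in\bar B}\big(-\log\mu(g\cdot x,g\cdot y)\big)$. First establish Birkhoff's explicit formula $e^{\Delta(g)}=\max_{i,j,k,l\le N}\frac{g_{ik}g_{jl}}{g_{jk}g_{il}}$: from $g_{ik}g_{jl}\le\big(\max_{k,l}\frac{g_{ik}g_{jl}}{g_{jk}g_{il}}\big)g_{jk}g_{il}$, multiplying by $x_ky_l\ge0$ and summing, one gets $\frac{(gx)_i(gy)_j}{(gx)_j(gy)_i}\le\max_{k,l}\frac{g_{ik}g_{jl}}{g_{jk}g_{il}}$, hence $-\log\mu(g\cdot x,g\cdot y)=\log\max_{i,j}\frac{(gx)_i(gy)_j}{(gx)_j(gy)_i}$ is bounded by the claimed maximum, with equality reached on $x=e_k$, $y=e_l$. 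Since $\phi$ is continuous and decreasing, $c(g)=\sup_{x,y}\phi(\mu(g\cdot x,g\cdot y))=\phi\big(e^{-\Delta(g)}\big)$, so (iii) is immediate (if the entries of $g$ lie in $[\eps_0,M_0]$ with $\eps_0>0$ then $e^{\Delta(g)}\le M_0^2/\eps_0^2<\infty$, hence $c(g)=\phi(e^{-\Delta(g)})<\phi(0)=1$), and so is (v) (the ratio $\frac{g_{ik}g_{jl}}{g_{jk}g_{il}}$ is invariant under $g\mapsto g^{*}$, so $\Delta(g^{*})=\Delta(g)$). Part (iv) then follows from (ii) and the semigroup identity $(gg')\cdot x=g\cdot(g'\cdot x)$: $d((gg')\cdot x,(gg')\cdot y)\le c(g)\,d(g'\cdot x,g'\cdot y)\le c(g)c(g')$, and take the supremum over $x,y$.

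The heart of the matter is (ii), which I expect to be the main obstacle. We may assume $d(x,y)<1$, i.e. $x$ and $y$ share the same support (otherwise $d(g\cdot x,g\cdot y)\le c(g)=c(g)\,d(x,y)$ trivially). Write $a=m(x,y)=\min_i x_i/y_i$, $A=\max_i x_i/y_i$ (finite on the common support), $w=x-ay\ge0$, so that $gx=a\,gy+gw$; then $\min_i(gx)_i/(gy)_i=a+p$ and $\max_i(gx)_i/(gy)_i=a+q$ with $p=\min_i(gw)_i/(gy)_i\ge0$, $q=\max_i(gw)_i/(gy)_i$, whence $\mu(g\cdot x,g\cdot y)=\frac{a+p}{a+q}$ and $d(g\cdot x,g\cdot y)=\phi\big(\frac{a+p}{a+q}\big)=\frac{q-p}{2a+p+q}$. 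The three facts $gw\ge0$, $g(Ay-x)\ge0$, and the definition of $\Delta(g)$ applied to the positive pairs $(gw,gy)$ and $(g(Ay-x),gy)$ translate into $0\le p\le q\le A-a$, $q\le e^{\Delta(g)}p$ and $e^{\Delta(g)}q-p\le(e^{\Delta(g)}-1)(A-a)$; a short optimization of $\frac{q-p}{2a+p+q}$ over this polytope (the objective increases in $q$ and decreases in $p$, so the extremum sits on the boundary) shows its maximum equals $\frac{e^{\Delta(g)}-1}{e^{\Delta(g)}+1}\cdot\frac{A-a}{A+a}=c(g)\,d(x,y)$, attained at $p=\frac{A-a}{e^{\Delta(g)}+1}$, $q=\frac{e^{\Delta(g)}(A-a)}{e^{\Delta(g)}+1}$. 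This is the contraction property in the metric $d$; alternatively one may simply invoke Section 10 of \cite{Hennion97}.
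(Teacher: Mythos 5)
The paper does not prove this proposition; it is stated with a citation to Section~10 of \cite{Hennion97}, and the appendix only uses it. So there is no internal proof to compare with. Assessed on its own terms, your reconstruction is correct and follows the classical Birkhoff cone--contraction route: the projective diameter $\Delta(g):=\sup_{x,y}\bigl(-\log\mu(g\cdot x,g\cdot y)\bigr)$, the cross-ratio formula $e^{\Delta(g)}=\max_{i,j,k,l}\frac{g_{ik}g_{jl}}{g_{jk}g_{il}}$, and the identity $c(g)=\phi\bigl(e^{-\Delta(g)}\bigr)$. The algebra in part (i) all checks out: the multiplicative inequality $\mu(x,z)\ge\mu(x,y)\mu(y,z)$, the elementary inequality $\phi(st)\le\phi(s)+\phi(t)$ (clearing denominators does reduce it to $(1-s)(1-t)\ge 0$), and the $\ell^1$ chain $\|x-y\|_2\le\|x-y\|_1\le(1-a)+(1-b)\le 2\phi(ab)$, the last step reducing to $(1-\sqrt{ab})^2\ge 0$ via $a+b\ge 2\sqrt{ab}$.

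The one step I would ask you to spell out is the ``short optimization'' at the end of the proof of (ii). Monotonicity of $f(p,q)=\frac{q-p}{2a+p+q}$ in $p$ and $q$ does place the maximum on the boundary, but it does not by itself pick out the vertex you claim over the competing vertices $(0,0)$ or $(A-a,A-a)$. A clean finish is to substitute $r=q/p\in[1,e^{\Delta}]$ (forced by $p\le q\le e^{\Delta}p$): for fixed $r$ one has $\partial f/\partial p=\frac{2a(r-1)}{(2a+p(r+1))^2}>0$, so $f$ is increasing in $p$, and the binding bound on $p$ is $p\le\frac{(e^{\Delta}-1)(A-a)}{e^{\Delta}r-1}$ (one checks this dominates the constraint $q\le A-a$ for all $r\ge 1$). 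Substituting gives a function of $r$ alone with no interior critical point, maximized at $r=e^{\Delta}$, which evaluates to $\frac{e^{\Delta}-1}{e^{\Delta}+1}\cdot\frac{A-a}{A+a}=c(g)\,d(x,y)$. Two smaller points worth a line each: (a) you apply $\Delta(g)$ to the unnormalized vectors $w=x-ay$ and $Ay-x$, which is legitimate by scale invariance of $\mu$ but requires noting that they are nonzero, which is exactly the case $x\neq y$; and (b) part (ii) is asserted for positive (not necessarily strictly positive) $g$, for which $\Delta(g)$ may be infinite. Your argument does extend --- the constraints degenerate to $0\le p\le q\le A-a$, the maximum of $f$ becomes $\frac{A-a}{A+a}=d(x,y)$, and $c(g)=\phi(0)=1$, so $d(g\cdot x,g\cdot y)\le d(x,y)=c(g)\,d(x,y)$ --- but this limiting case should be mentioned.
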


\subsection{Stochastic contractivity} \label{app:contr}

We present the proof of Lemma \ref{th:cvdir}. Recall our principal task consists in showing the existence of a $B$-valued r.v. $\VNI$ such that,   $\Pi(t) \cdot x \to \VNI$ a.s. for all $x \in \bar B$ as $t \to \8$. 

The random sequence $c(\Pi(t))$ decreases and hence has a limit for all $\omega$.  
By item (iv) of Proposition \ref{prop:distance}, $c(\Pi(t)) \leq \prod_{s=0}^{t-1} c(\fxi(s))$.
By item (iii), each term is strictly less than 1, and by independence, we have a.s.
$$
\limsup_{t \to \8} t^{-1} \log c(\Pi(t)) \leq \E \log c(\fxi) <0 .
$$ 
(In fact, by subadditivity, the $\limsup$ is an a.s. limit, and the value is 
$\inf_t t^{-1} \E \log c(\Pi(t))$.)
Then, the random polygons $K(t, \omega)= \{ \Pi(t) \cdot x; x \in \bar B\}$ form a decreasing sequence of compact subsets of $B$, so that they have a limit, 
$$
K(\omega) = \bigcap_{t \geq 1} K(t) \neq \emptyset .
$$
If $x, y \in K(\omega)$, we have for all $t$,
$$
d(x,y) \leq \sup \{ d(\Pi(t) \cdot x', \Pi(t) \cdot y'); x', y' \in \bar B\} \leq c(\Pi(t)) ,
$$
so the distance is 0. Finally, $K(\omega)$ reduces to a random point, say $\VNI$ of $B$. 
In particular, $\VNI \in K(t)$ implies that 
$$d(\Pi(t) \cdot x, \VNI) \leq c(\Pi(t)) \to 0 ,$$
so that $\Pi(t) \cdot x \to \VNI$ in the $d$-distance as well as in the Euclidean distance
by item (i) of Proposition \ref{prop:distance}.

\qed


\begin{thebibliography}{}

\bibitem{Arnold}
L. Arnold, V. Gundlach, L. Demetrius:
Evolutionary formalism for products of positive random matrices.
{\it Ann. Appl. Probab.} {\bf 4} (1994),  859--901.


\bibitem{Bakhtin-Khanin}
Y. Bakhtin, K. Khanin: Localization and Perron--Frobenius theory for directed polymers %http://arxiv.org/abs/0909.2293 Ñ 
{\it Mosc. Math. J.} {\bf 10} (2010),  667--686

\bibitem{Bates}
E. Bates:
Localization of directed polymers with general reference walk.
{\tt arXiv:1708.03713
} preprint 2017. 



\bibitem{Bates-Chatterjee}
E. Bates, S. Chatterjee:
The endpoint distribution of directed polymers.
{\tt arXiv:1612.03443} preprint 2016. 

\bibitem{BinghamGoldieTeugels}
N. Bingham, C. Goldie, J. Teugels: {\it Regular variation}. Encyclopedia of Mathematics and its Applications, 27. Cambridge University Press, Cambridge, 1987. 

\bibitem{BertoinS}
J.~Bertoin: {Subordinators: examples and applications.} Lectures on probability theory and statistics (Saint-Flour, 1997), pp.1--91, Lecture Notes in Math., 1717, Springer, Berlin, 1999.

\bibitem{Bolthausen}
E. Bolthausen: { A note on the diffusion of directed polymers in a random environment}, {\it Comm. Math. Phys.} {\bf 123} (1989), no. 4, 529--534

\bibitem{BouLa85}
P.~Bougerol, J.~Lacroix: {\it Products of random matrices with applications to Schršdinger operators.} Progress in Probability and Statistics, 8. Birkh\"auser Boston,  1985.

\bibitem{Bovier} 
A.~ Bovier: {\it Gaussian processes on trees. From spin glasses to branching Brownian motion. }
Cambridge Studies in Advanced Mathematics, 163. Cambridge University Press, Cambridge, 2017

\bibitem{BrunetDerrida00}
E.~Brunet, B.~Derrida: 
Probability distribution of the free energy of a directed polymer in a random medium. 
{\it Phys. Rev. E} (3) {\bf 61} (2000), no. 6, part B, 6789--6801

\bibitem{BrunetDerrida04}
E. Brunet, B. Derrida
{  Exactly soluble noisy traveling-wave equation appearing in the problem of di- rected polymers in a random medium.} {\it Phys. Rev. E} 70, 016106 (2004)
%\bibitem{Bushell73}
%P.  Bushell:
%Hilbert's metric and positive contraction mappings in a Banach space. 
%{\it Arch. Rational Mech. Anal.} {\bf 52} (1973), 330--338. 

\bibitem{Buffet-Pule}
E. Buffet, J. Pul\'e, { Directed polymers on trees: a martingale approach}, {\it J. Phys. A: Math. Gen.} {\bf  26} (1993) 1823--1834

\bibitem{CaGuHuMe}
P.~Carmona, F.~Guerra, Y.~Hu, O.~Mejane:
Strong disorder for a certain class of directed polymers in a random environment. 
{\it J. Theoret. Probab.} {\bf 19} (2006), 134--151

\bibitem{CN84}
J.~Cohen, C.~Newman: The stability of large random matrices and their products. 
{\it Ann. Probab. } {\bf 12} (1984), 283--310. 

\bibitem{SF}
F.~Comets: {\it Directed polymers in random environments.}  Lecture Notes in Mathematics, 2175. Springer,  2017. 

\bibitem{CQR}
F.~Comets, J.~Quastel; A.~Ram\'irez:
Last passage percolation and traveling fronts. 
{\it J. Stat. Phys.} {\bf 152} (2013),  419--451

\bibitem{CSY}
F. Comets, T. Shiga, N. Yoshida:
Directed polymers in a random environment: path localization and strong disorder.
{\it Bernoulli}
{\bf 9}  (2003), 705--723.

\bibitem{Cook-Derrida}
J.~Cook, B.~Derrida: Directed polymers in a random medium: 1/d expansion and the n-tree approximation
{\it J. Phys. A: Math. Gen.} {\bf 23} (1990), 1523--1554  %[LPP only??]


\bibitem{Cortines}
A.~Cortines:
The genealogy of a solvable population model under selection with dynamics related to directed polymers. 
{\it Bernoulli} {\bf 22} (2016), 2209--2236.


\bibitem{DerridaSpohn}
B.~Derrida, H.~Spohn:
Polymers on disordered trees, spin glasses, and traveling waves. 
%New directions in statistical mechanics (Santa Barbara, CA, 1987). 
{\it J. Statist. Phys. } {\bf 51} (1988), 817--840. 

\bibitem{Durrett}
R. Durrett:
{\it Probability: theory and examples}. Fourth edition. Cambridge University Press, Cambridge, 2010.

\bibitem{Eckmann89}
J.-P. Eckmann, C. Waynes:
The largest Lyapunov exponent for random matrices and directed polymers in a random environment. 
{\it Commun. Math. Phys.} {\bf 121}  (1989) 147--175.

\bibitem{FellerII}
W. Feller:
{\it An introduction to probability theory and its applications.} Vol. II. 
2nd ed.,  John Wiley \& Sons, 1971.


\bibitem{Furst63}
H.~Furstenberg:  Noncommuting random products. {\it Trans. Amer. Math. Soc.} {\bf 108} (1963) 377--428

\bibitem{FurstKest60}
H.~Furstenberg, H.~Kesten: 
Products of random matrices. {\it Ann. Math. Statist.} {\bf 31} (1960)  457--469. 

\bibitem{GuivRaugi85}
Y.~Guivarc'h, A.~Raugi: 
 Fronti\`ere de Furstenberg, propri\'et\'es de contraction et th\'eor\`emes de convergence. {\it Z. Wahrsch. Verw. Gebiete} {\bf 69} (1985) 187--242.
 
 \bibitem{Henley-Huse}
 D. Henley, C. Huse:
{ Pinning and roughening of domain wall in Ising systems due to random impurities}, {\it Phys. Rev. Lett.} {\bf 54} (1985), 2708--2711.

\bibitem{Hennion97}
H. Hennion: Limit theorems for products of positive random matrices. 
{\it Ann. Probab.} {\bf 25} (1997) 1545--1587.

\bibitem{HennionHerve08}
H. Hennion, L. Herv\'e:
 Stable laws and products of positive random matrices. {\it J. Theoret. Probab.} {\bf  21} (2008) 966--981.
 
 
 \bibitem{Imbrie-Spencer}
 J. Imbrie, T. Spencer:
{ Diffusion of directed polymers in a random environment}
{\it J. Stat. Phys.} {\bf 52} 609 (1988) 609-626

\bibitem{JacodSF}
J.~Jacod: {\it Th\'eor\`emes limite pour les processus}. Saint-Flour notes. Lecture Notes in Mathematics,
vol. {\bf 1117}, pp. 298--409. Springer (1983)

\bibitem{KPZ}
M. Kardar, G. Parisi, Y. Zhang:
{ Dynamic Scaling of Growing Interfaces}
{\it Phys. Rev. Lett. } {\bf 56}, 889 (1986), 889-892

\bibitem{KZ}
 M. Kardar, Y. Zhang:
 { Scaling of Directed Polymers in Random Media}
{\it Phys. Rev. Lett.} {\bf 58}, 2087 (1987) , 2087-2090

 \bibitem{LLR}
M. Leadbetter, G. Lindgren, H. Rootz\'en: {\it Extremes and related properties of random sequences and processes}. 
Springer Series in Statistics. Springer-Verlag, New York-Berlin, 1983 

\bibitem{Newman86}
C. Newman: The distribution of Lyapunov exponents: exact results for random matrices. 
{\it Comm. Math. Phys.} {\bf 103} (1986),  121--126

 
 \bibitem{OCY}
N. O'Connell, M. Yor:
{Brownian analogues of Burke's theorem}, {\it Stoch. Process.
Appl. } {\bf 96} (2001), 285--304.
 
 \bibitem{Sinai}
 Ya. Sina\"i: A random walk with a random potential. 
 {\it Teor.
Veroyatnost. i Primenen.} {\bf 38} (1993) 457--460.

 
 \bibitem{Steele89}
 J.M. Steele:
  Kingman's subadditive ergodic theorem. 
  {\it Ann. Inst. H. Poincar\'e Probab. Statist.} {\bf 25} (1989),  93--98.


\bibitem{Timo}
T. Sepp\"al\"ainen:
 Scaling for a one-dimensional directed polymer with boundary conditions.
{\it Ann. Probab.}
{\bf 40} (2012), 19-73.

 \end{thebibliography}
\end{document}